\newcolumntype{Q}{>{$\displaystyle}l<{$}}
\newcolumntype{A}{>{$}c<{$}}
\newcommand{\lemref}[1]{Lemma~\ref{#1}}
\newcommand{\disc}{\mathbb{D}}
\newcommand{\Snl}{\mathcal{S}^*_{n\mathcal{L}}}
\DeclareMathOperator{\RE}{Re}
\numberwithin{equation}{section}
\newtheorem{theorem}{Theorem}[section]
\newtheorem{lemma}[theorem]{Lemma}
\theoremstyle{remark}
\newtheorem{remark}[theorem]{Remark}
\begin{document}
 	\title{Geometric properties of a domain with cusps}
 \author[S. Gandhi]{Shweta Gandhi}

 \address{Department of Mathematics, Miranda House, University of Delhi,
 	Delhi--110 007, India}
 \email{gandhishwetagandhi@gmail.com}
 	
 \author[P. Gupta]{Prachi Gupta}
 \address{Department of Mathematics, University of Delhi, Delhi--110 007, India}
 \email{prachigupta161@gmail.com}

 \author[S. Nagpal]{Sumit Nagpal}
 \address{Department of Mathematics, Ramanujan College, University of Delhi,
 	Delhi--110 019, India}
 \email{sumitnagpal.du@gmail.com }

 \author[V. Ravichandran]{V. Ravichandran}

 \address{Department of Mathematics,
 	National Institute of Technology,
 	Tiruchirappalli-620015 }
 \email{vravi68@gmail.com}

\begin{abstract}
For $n\geq 4$ (even), the function $\varphi_{n\mathcal{L}}(z)=1+nz/(n+1)+z^n/(n+1)$ maps the unit disk $\disc$ onto a domain bounded by an epicycloid with $n-1$ cusps. In this paper, the class $\Snl=\mathcal{S}^*(\varphi_{n\mathcal{L}})$ is studied and various inclusion relations are established with other subclasses of starlike functions. The bounds on initial coefficients is also computed. Various radii problems are also solved for the class $\Snl.$
\end{abstract}
 	\keywords{Radius Problem; starlike functions; cusps; three leaf domain; inclusion relation; coefficient estimate; epicycloid.}
\subjclass[2020]{30C45, 30C50, 30C80}
 	\maketitle
 	\section{Introduction}
 	 An Epicycloid\cite{LAW} is a plane curve produced by tracing the path of a chosen point on the circumference of a circle of radius $b$ which rolls without slipping around a fixed circle of radius $a$. The parametric equation of an epicycloid is
 	\begin{align*}
 		 x(t)&=m\cos t- b\cos\left(\frac{mt}{b}\right),\\	y(t)&=m\sin t- b\sin\left(\frac{mt}{b}\right),\qquad-\pi\leq t\leq \pi,
 	\end{align*}
where $m=a+b.$ If $m/b$ is an integer, then the curve has $m/b-1$ cusps. Some of the epicycloid have  special names. For $a=b,$ the curve obtained is called a cardiod and has one cusp; for $a=2b$ it is a nephroid with two cusps and for $a=5b$, the curve formed is called ranunculoid, a five-cusped epicycloid.
 	 A parametric curve $\left(f(t),g(t)\right)$ has a cusp \cite{HAGEN} at the point $\left(f(t_0),g(t_0)\right)$ if $f'(t_0)$ and $g'(t_0)$ is zero but either $f''(t_0)$ or $g''(t_0)$ is not equal to zero. Many curves have been widely studied having no cusp, one cusp, two cusps and three cusps. For instance, the boundary of image domains of the functions  $e^z$, $1+\sin z$ and $2/(1+e^{-z})$\cite{CHO1,GOEL,MEND1}, under unit disk, have no cusp. The Lemniscate of Bernoulli $\sqrt{1+z}$, the reverse Lemniscate $\varphi_{RL}(z)$ and cardiod type domain (see\cite{GUPTA,SUSHIL,MEND,KANSH,SIVA,SOKOL}) contains one cusp on the real axis. Nephroid \cite{WANI2} has two cusps on real axis whereas lune\cite{RAINA} and petal-like domain \cite{SIVA1} contains two cusps at the angle $\pi/2$ and $3\pi/2.$ Gandhi \cite{GANDHI} studied the class of functions for which boundary of the image domain contains three cusps, one on real axis and two at the angles $\pi/3$ and $5\pi/3.$ Motivated by this work, we have considered a more general domain whose boundary has the following parametric form:
 	\begin{align}\label{eqn0}
 		\begin{split}
 					x(t)&=1+\frac{n}{n+1}\cos t+\frac{1}{n+1}\cos(nt),\\
 			y(t)&=\frac{n}{n+1}\sin t+\frac{1}{n+1}\sin(nt),
 		\end{split}
 	\end{align}
 for $n\geq 4$ (even). For $a=(n-1)/(n+1)$ and $b=1/(n-1),$ the curve (\ref{eqn0}) represents a rotated and translated epicycloid\cite{MADACHY} with $(n-1)$ cusps. It is an algebraic curve of order $2n$. It can be easily seen that $x'(t_k)=0$ and $y'(t_k)=0$ for $t_k=(2k-1)\pi/(n-1),$ where $k=1,2,\ldots (n-2)/2.$ Also, $x''(t_k)$ and $y''(t_k)$ are not zero together. By the definition of cusps, the curve (\ref{eqn0}) has cusps at the points $t_k.$  The function $\varphi_{n\mathcal{L}}:\disc\rightarrow\mathbb{C}$ given by
 	\begin{align}\label{eqn2}
 	\varphi_{n\mathcal{L}}(z)=1+\frac{nz}{n+1}+\frac{z^n}{n+1},\quad (z\in\disc)
 	\end{align}
 maps unit circle to this curve and the unit disk onto the region bounded by the curve (\ref{eqn0}).

\par Ma and Minda\cite{MaMinda} introduced the unified class of starlike functions $\mathcal{S}^*(\varphi)$ consisting of functions $f\in\mathcal{S}$ such that $zf'(z)/f(z)\prec \varphi(z),$ for all $z\in\disc,$ where $\varphi$ is univalent function having positive real part, $\varphi(\disc)$ is symmetric about real axis and starlike with respect to $\varphi(0)=1$ and $\varphi'(0)>0.$ The image domain $\varphi_{n\mathcal{L}}(\disc)$ is symmetric about real axis, has positive real part  and starlike with respect to $\varphi_{n\mathcal{L}}(0)=1$. Also, $\varphi_{n\mathcal{L}}'(0)>0.$ Thus, the function satisfies all the conditions of Ma-Minda class and hence we can define the following class.

Let $\Snl=\mathcal{S}^*(\varphi_{n\mathcal{L}})$ be the class of function $f:\disc\rightarrow\mathbb{C}$ such that
\[\frac{zf'(z)}{f(z)}\prec \varphi_{n\mathcal{L}}(z)=1+\frac{nz}{n+1}+\frac{z^n}{n+1},\quad (z\in\disc),\]
for $n\geq 4,$ even. A function $f:\disc\rightarrow\mathbb{C}$ belongs to the class $\Snl$ if and only if there exists an analytic function $\phi$ satisfying $\phi\prec\varphi_{n\mathcal{L}}$ such that
\[f(z)=z\exp\left(\int_{0}^{z}\frac{\phi(t)-1}{t}dt\right)\]
 	The function $f_{n\mathcal{L}}:\disc\rightarrow\mathbb{C}$ given by
 	\begin{align}\label{eqn4}
 	f_{n\mathcal{L}}(z)=z\exp\left(\frac{n}{n+1}z+\frac{1}{n(n+1)}z^n\right)=z+\frac{n}{n+1}z^2+\frac{n^2}{2(n+1)}z^3+\ldots,
 	\end{align}
where $\varphi_{n\mathcal{L}}$ is given by (\ref{eqn2}). This function acts as extremal function for most of the results for the class $\Snl.$ Also, the concept of cusps is important to study the geometry for this domain as the cusp at the angle $\pi/(n-1)$ plays a vital role in computing various radii constants concerning the class $\Snl.$ Also, the class $\Snl$ becomes the class $\mathcal{S}^*(1+z)$ as the limit $n\rightarrow\infty.$ In the limiting case, the $n$-cusp domain transforms to the disk with center and radius $1$ (see Figure \ref{limit}).
\begin{figure}[h]
	\begin{center}
		\subfigure[n=10]{\includegraphics[width=1.2in]{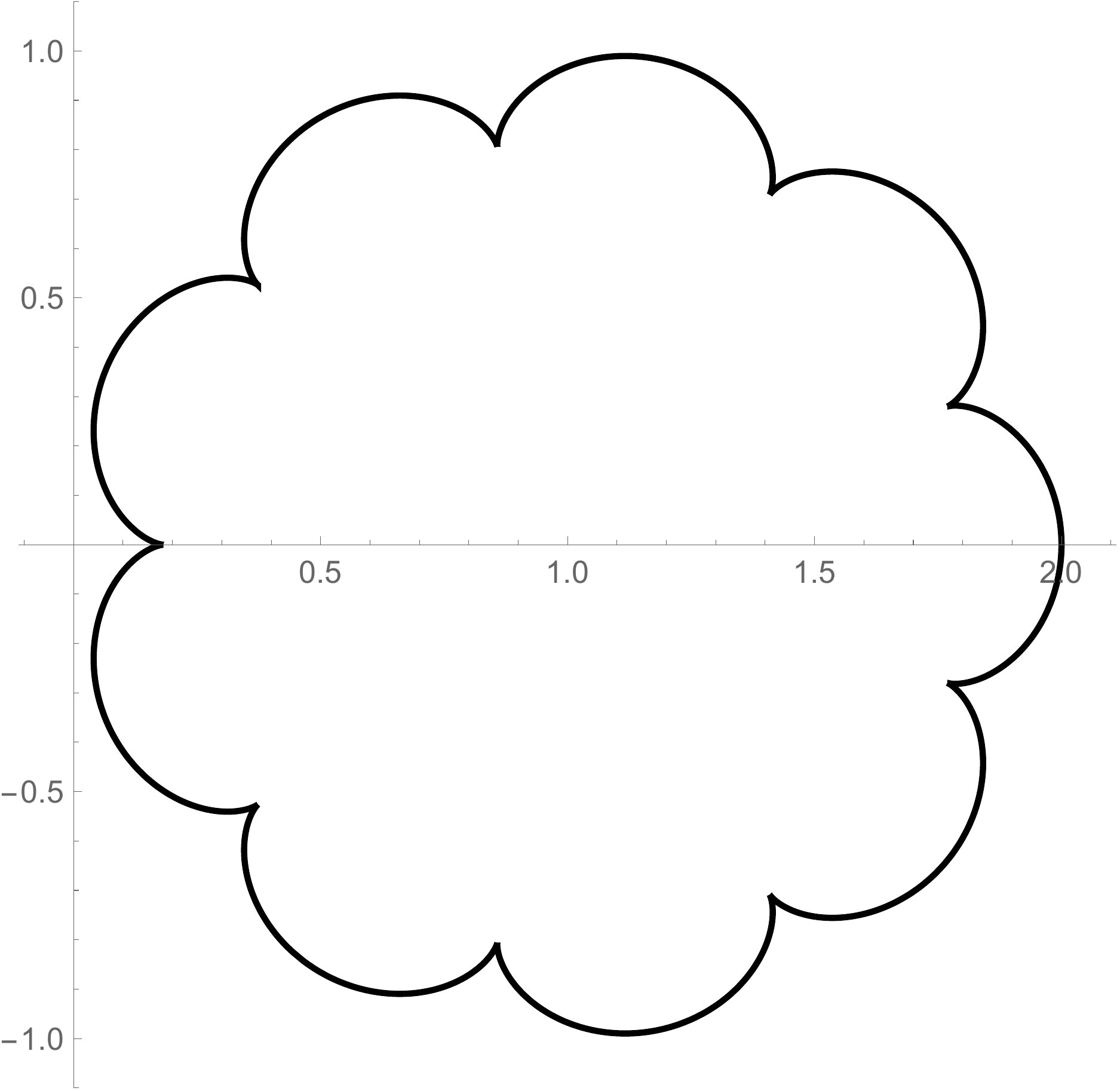}}\hspace{10pt}
		\subfigure[n=50]{\includegraphics[width=1.2in]{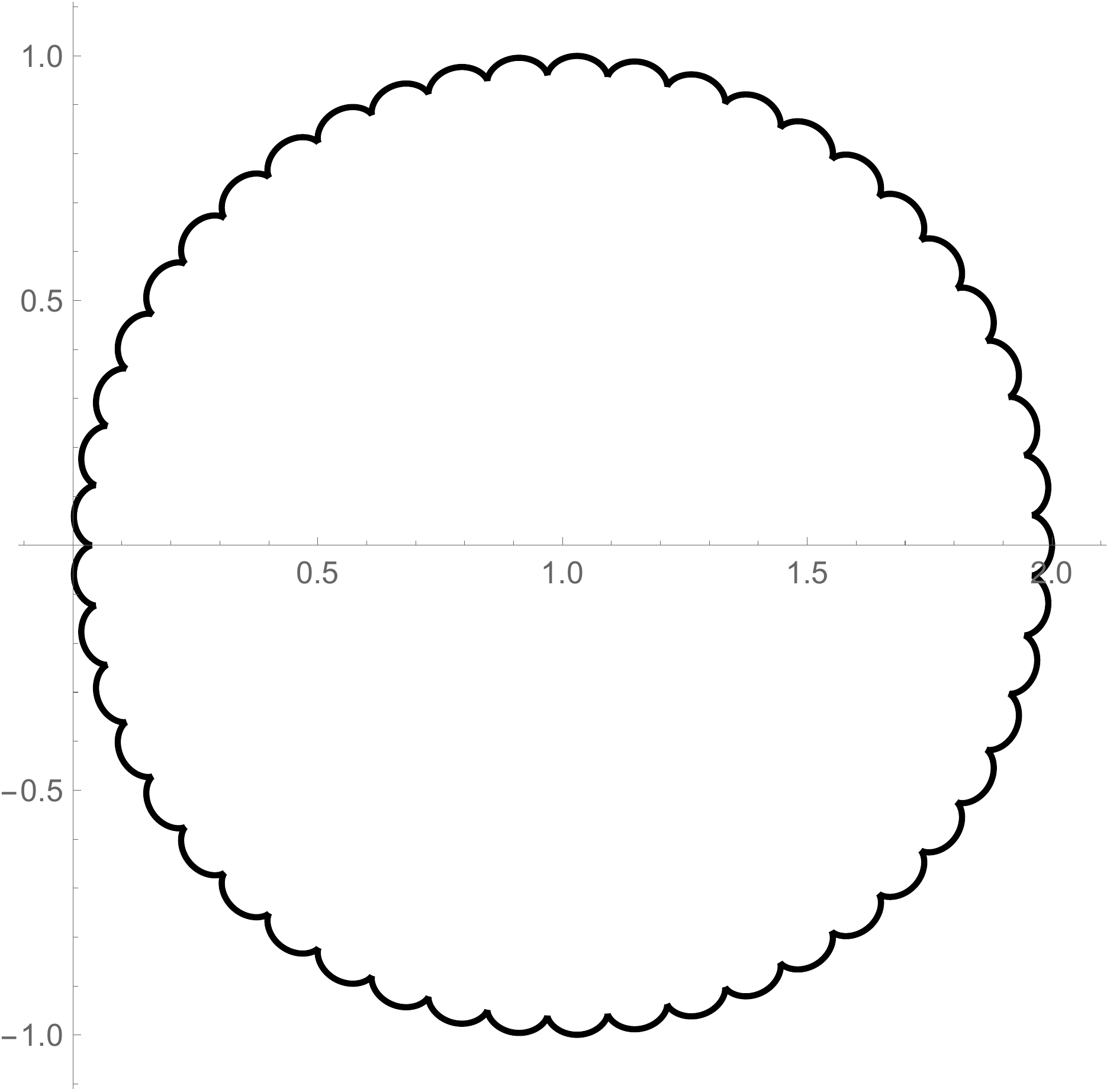}}\hspace{10pt}
		\subfigure[n=100]{\includegraphics[width=1.2in]{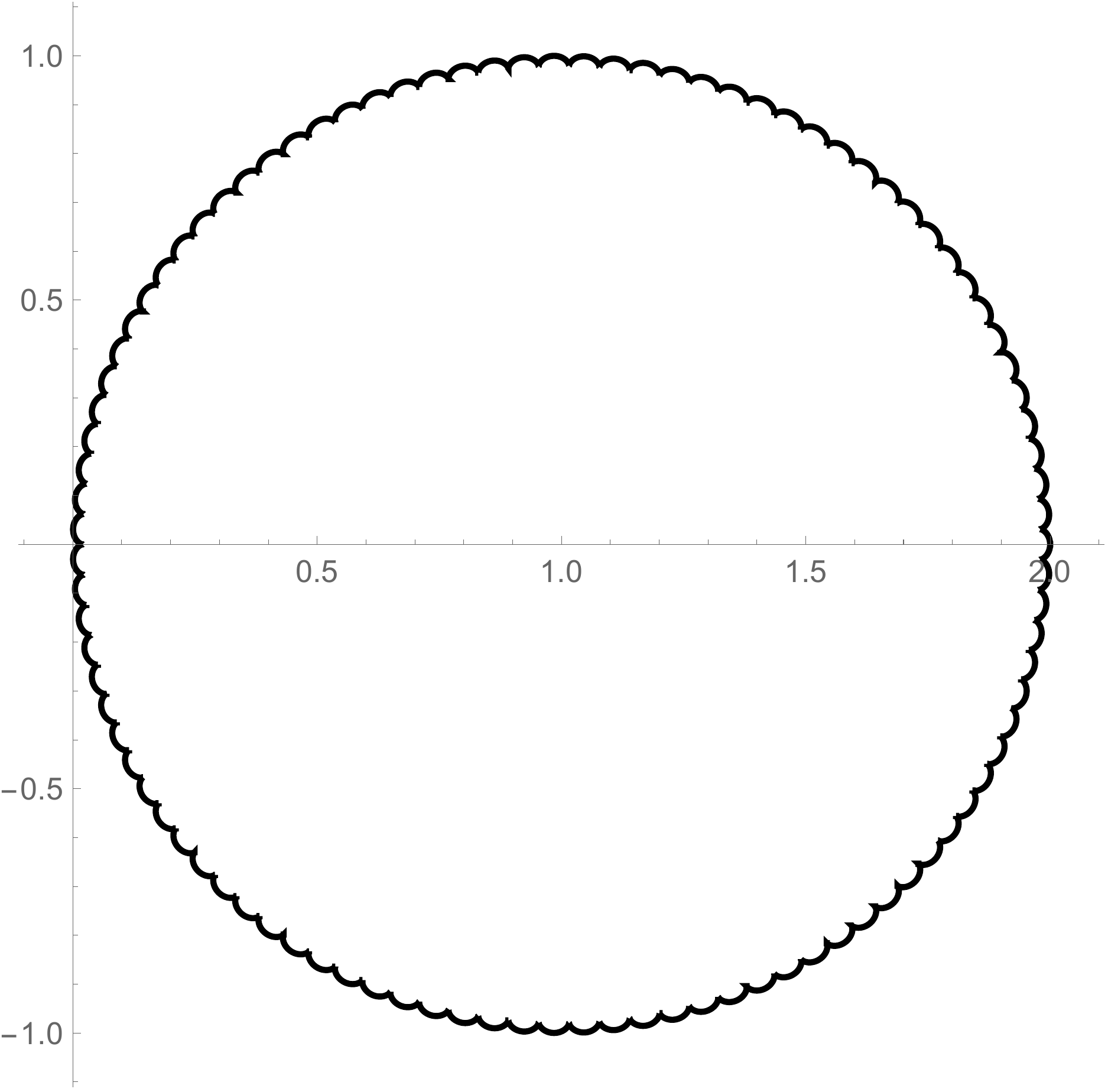}}\hspace{10pt}
		\subfigure[n=1000]{\includegraphics[width=1.2in]{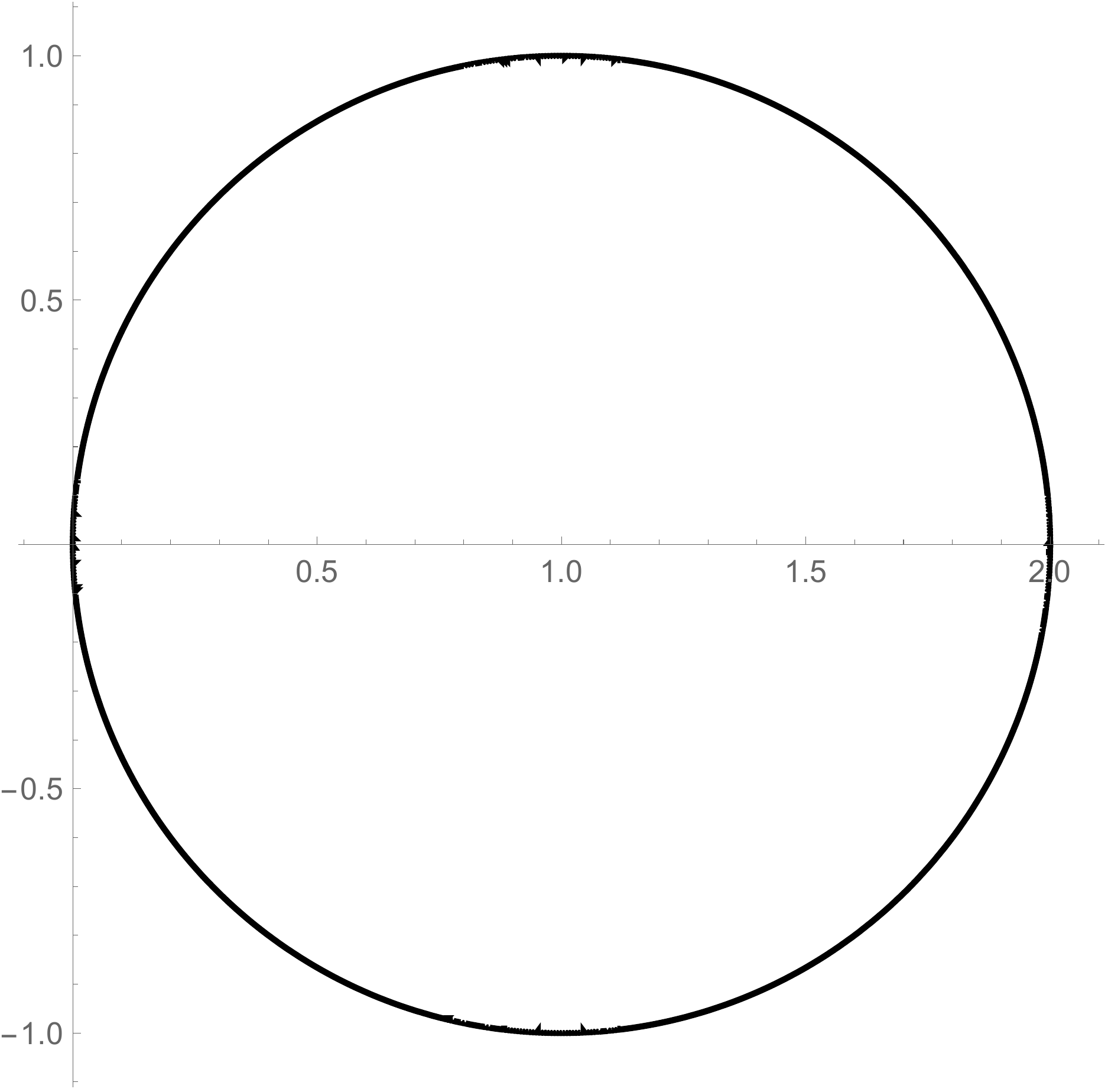}}\hspace{10pt}
		\caption{Limiting case}\label{limit}
	\end{center}
\end{figure}
\par In the present work, various inclusion relations and radii problems for the class $\Snl$ are investigated. The sharp bounds for the first fifth coefficients of a function $f\in\Snl$ are computed. Further, various inclusion relations have been established between the class $\Snl$ and various subclasses of starlike functions such as $\mathcal{S}^*(\alpha),\,\mathcal{S}\mathcal{S}^*(\beta)$ and many others. Also, the sharp $\Snl-$radius is computed for various known classes os starlike functions and radius estimates for the class $\mathcal{S}^*(1+z)$ are obtained by taking the limit as $n\rightarrow\infty.$ In the last section, the radii constants for the class $\Snl$ are computed.
 	\begin{lemma}\label{lemma}
 		For $(n+1)<a<2,$ let $r_a$ be given by
 		\begin{align*}
 			r_a=\begin{cases}
 				a-\displaystyle\frac{2}{n+1},& \displaystyle\frac{2}{n+1}<a\leq 1,\\ 		
 				\sigma\left(\displaystyle\frac{\pi}{n-1}\right),& 1\leq a<a_3, \\
 				2-a,& a_3<a<2,
 			\end{cases}
 		\end{align*}
 		where $a_3$ is the solution of the equation $\displaystyle \sigma\left(\pi/(n-1)\right)=\sigma(0)$ and the function $\sigma$ is the square of the distance from the point $(a,0)$ to the points on the curve $\partial\varphi_{n\mathcal{L}}(\disc).$ Then $\{w:|w-a|<r_a\}\subseteq \varphi_{n\mathcal{L}}(\mathbb{D})$.
 	\end{lemma}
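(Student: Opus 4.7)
My plan is to identify $r_a$ with the Euclidean distance from the point $(a,0)$ to the boundary $\partial\varphi_{n\mathcal{L}}(\disc)$, and to minimise the squared-distance function
\[
D^2(t)=(x(t)-a)^2+y(t)^2
\]
along the parametrisation \eqref{eqn0}. Real-axis symmetry reduces the search to $t\in[0,\pi]$. Once $\min_t D(t)$ is identified, the open disk of that radius about $(a,0)$ is automatically contained in $\varphi_{n\mathcal{L}}(\disc)$.

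To locate the critical points I differentiate and apply the sum-to-product identities
\[
\sin t+\sin(nt)=2\sin\tfrac{(n+1)t}{2}\cos\tfrac{(n-1)t}{2},\qquad \cos t+\cos(nt)=2\cos\tfrac{(n+1)t}{2}\cos\tfrac{(n-1)t}{2},
\]
which produce the factorisation
\[
\tfrac{1}{2}(D^2)'(t)=\tfrac{2n}{n+1}\cos\tfrac{(n-1)t}{2}\Bigl[y(t)\cos\tfrac{(n+1)t}{2}-(x(t)-a)\sin\tfrac{(n+1)t}{2}\Bigr].
\]
Writing $(x-1)+iy=\tfrac{n}{n+1}e^{it}+\tfrac{1}{n+1}e^{int}$ and computing $\IM\left[((x-1)+iy)e^{-i(n+1)t/2}\right]$ collapses the bracketed factor to $(a-1)\sin\tfrac{(n+1)t}{2}-\tfrac{n-1}{n+1}\sin\tfrac{(n-1)t}{2}$. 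Consequently the critical points of $D^2$ on $[0,\pi]$ are the cusp parameters $t_k=(2k-1)\pi/(n-1)$ (which also includes $t=\pi$) together with the roots of the auxiliary equation $(a-1)\sin\tfrac{(n+1)t}{2}=\tfrac{n-1}{n+1}\sin\tfrac{(n-1)t}{2}$.

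Because every cusp sits on the circle $|w-1|=(n-1)/(n+1)$ at angle $t_k$ from $(1,0)$, one has
\[
D(t_k)^2=(1-a)^2+2(1-a)\tfrac{n-1}{n+1}\cos t_k+\left(\tfrac{n-1}{n+1}\right)^2,
\]
which is monotone in $\cos t_k$. Hence the minimum cusp distance is $D(\pi)=a-2/(n+1)$ for $a\leq 1$ and $D(\pi/(n-1))$ for $a\geq 1$; together with $D(0)=2-a$ these are the three natural candidates. A direct comparison shows the crossover $a-2/(n+1)=D(\pi/(n-1))$ occurs at $a=1$, while $D(\pi/(n-1))=2-a$ is the defining equation for $a_3$. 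This recovers exactly the three-case piecewise formula in the statement.

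The main technical obstacle is to rule out that a root of the auxiliary equation $(a-1)\sin\tfrac{(n+1)t}{2}=\tfrac{n-1}{n+1}\sin\tfrac{(n-1)t}{2}$ could deliver a strictly smaller value of $D$. I plan to handle this by a sign analysis on each sub-interval between consecutive cusps: on such an interval the factor $\cos\tfrac{(n-1)t}{2}$ has a definite sign, so the sign of $(D^2)'$ is governed entirely by the bracket, whose boundary values at the adjacent cusps are $\mp\tfrac{n-1}{n+1}+(a-1)\sin\tfrac{(n+1)t_k}{2}$. Tracking these signs through the comparatively mild oscillation of $\sin\tfrac{(n+1)t}{2}$ on an interval of length $2\pi/(n-1)$ certifies that any interior auxiliary root corresponds to a local maximum of $D^2$, so the global minimum indeed occurs at one of the cusp/endpoint values enumerated above. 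Assembling these steps yields the lemma.
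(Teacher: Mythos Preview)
Your approach is essentially the same as the paper's: both minimise the squared-distance function $\sigma(t)=D^2(t)$ over $[0,\pi]$ by factoring $\sigma'(t)$ as $\cos\frac{(n-1)t}{2}$ times a bracket, identifying the cusp parameters $t_k=(2k-1)\pi/(n-1)$ as critical points, and then comparing the candidate values $\sigma(\pi)$, $\sigma(\pi/(n-1))$, $\sigma(0)$ to obtain the three-case formula.

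Two differences are worth noting. First, your geometric observation that every cusp satisfies $\varphi_{n\mathcal{L}}(e^{it_k})=1+\frac{n-1}{n+1}e^{it_k}$ (hence all cusps lie on the circle $|w-1|=\frac{n-1}{n+1}$) gives the closed form $D(t_k)^2=(1-a)^2+2(1-a)\frac{n-1}{n+1}\cos t_k+\bigl(\frac{n-1}{n+1}\bigr)^2$, from which the monotonicity in $\cos t_k$ immediately identifies the nearest cusp; the paper instead compares $\sigma(k\pi/(n-1))$ against $\sigma(\pi)$ and $\sigma(\pi/(n-1))$ by direct algebraic manipulation case by case. Second, you explicitly flag and propose a sign-analysis to dispose of the interior roots of the auxiliary equation $(a-1)\sin\frac{(n+1)t}{2}=\frac{n-1}{n+1}\sin\frac{(n-1)t}{2}$, whereas the paper simply lists the critical points $t=0,\pi,\pi/(n-1),\ldots,(n-3)\pi/(n-1)$ and proceeds to the second-derivative test without addressing those roots. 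Your sketch here is plausible but would need the details filled in; neither treatment is fully rigorous on this point, though yours is the more honest about it.
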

 	\begin{proof}
 		Let $\varphi_{n\mathcal{L}}(z)$ be given by (\ref{eqn2}). Then any point on the boundary of $\varphi_{n\mathcal{L}}(\mathbb{D})$ is of the form $\varphi_{n\mathcal{L}}(e^{it})$. Since the curve $w=\varphi_{n\mathcal{L}}(e^{it})$ is symmetric with respect to real axis, so it is sufficient to consider the interval $0\leq t \leq \pi$. The parametric equation of $\varphi_{n\mathcal{L}}(e^{it})$ is given as follows:
 		\[\varphi_{n\mathcal{L}}(e^{it})=1+\frac{n}{n+1}\cos t+\frac{1}{n+1}\cos (nt) +i\left(\frac{n}{n+1}\sin t+\frac{1}{n+1} \sin (nt)\right)\\
 		\]
 		The square of the distance from the point $(a,0)$ to the points on the curve $\varphi_{n\mathcal{L}}(e^{it})$ is given by:
 		\begin{equation}\label{eqn1}
 			\sigma(t)=\left(1+\frac{n}{n+1}\cos t+\frac{1}{n+1}\cos (nt)-a\right)^2+\left(\frac{n}{n+1}\sin t+\frac{1}{n+1} \sin (nt)\right)^2.
 		\end{equation}
 		It can be easily seen that
 		\begin{equation*}
 			\sigma'(t)=4n\cos \left(\frac{(n-1)t}{2}\right) \left[(n-1) \sin\left(\frac{(1-n)t}{2}\right)+(n+1)(a-1)\sin \left(\frac{(1+n)t}{2}\right) \right].
 		\end{equation*}
 		A calculation shows that $\sigma'(t)= 0$ for $t=0,\pi,\frac{\pi}{n-1},\frac{3\pi}{n-1},\cdots,\frac{(n-3)}{n-1}\pi$ and
 		\begin{equation*}
 			\sigma''(t)=\frac{-2n\left((1-a)(1+n)\cos t+(1-a)n(n+1)\cos (nt)+(n-1)^2\cos(t-nt)\right)}{(n+1)^2}.
 		\end{equation*}
 		Clearly, it can be seen that \[\sigma''(0)=\displaystyle\frac{2n(a(1+n)^2-2(1+n^2))}{(1+n)^2} >0,\text{ for } a>\frac{2(1+n^2)}{(1+n)^2}>1.\]
 		Also, $\sigma''(\pi)>0$ for $a>2/(1+n)$ and \[\sigma''\left(\frac{\pi}{n-1}\right)>0 \text{ for } a< a_1=1-\frac{(n-1)^2}{(1+n)\cos\left(\frac{\pi}{n-1}\right)+n(n+1)\cos\left(\frac{n\pi}{n-1}\right)},\]
 		and $a_1>2(1+n^2)/(1+n)^2>1.$
 	Let us assume $a<1.$	
 		Now, $\sigma(\frac{\pi}{n-1})-\sigma(\pi)>0$ yields $(-1 + a)(1 + n)(-1 + n + n \cos(\pi/(n-1))+\cos(n \pi/(n-1)))<0$. Also, $(-1 + n + n \cos(\pi/(n-1))+\cos(n \pi/(n-1))>0$ and therefore $\sigma(\pi/(n-1))-\sigma(\pi)>0$. Hence, minimum value cannot be $\sigma(\pi/n-1)$.
 		Consider $\sigma(k\pi/(n-1))-\sigma(\pi)=(-1 + a)(1 + n)(-1 + n + n\ cos(k\pi/(n-1))+\cos(kn \pi/(n-1)))>0,$
 		for $k=3,5,\dots,n-3.$  Since $(-1 + n + n \cos(k\pi/(n-1))+\cos(kn \pi/(n-1)))>0,$ $\sigma(k\pi/(n-1))$ cannot be minimum for this case. By checking the sign of second derivative, minimum can be $\sigma(\pi/(n-1))$, $\sigma(k\pi/(n-1))$ or $\sigma(\pi)$ where $k=3,5,\dots,n-3$. A simple computation gives  $\sigma(k\pi/(n-1))-\sigma(\pi)>0$ and $\sigma(\pi/(n-1))-\sigma(\pi)>0$ and therefore minimum is $\sigma(\pi)$.
 		
 		Let us assume $a>1.$ For this case, $\sigma(\pi)>\sigma(\pi/(n-1))$ and thus $\sigma(\pi)$ cannot be minimum and $\sigma(0)$ can be minima for $a>2(1+n^2)/(1+n)^2$. In the interval $(1,2(1+n^2)/(1+n)^2)$ minimum can be $\sigma(\pi/(n-1))$ or $\sigma(k\pi/(n-1))$. By considering $\sigma(k\pi/(n-1))-\sigma(\pi/(n-1))=(a-1)(n(\cos(\pi/(n-1))-\cos(k\pi/(n-1))+(\cos(n\pi/(n-1)-\cos(kn\pi/(n-1))$ which can be proved to be greater than $0$ for $a>1$ and therefore $\sigma(k\pi/(n-1))$ cannot be the minimum and hence in the interval $(1,2(1+n^2)/(1+n)^2)$ minimum is $\sigma(\pi/(n-1))$. Now, we discuss the minimum in the interval $(2(1+n^2)/(1+n)^2,a_1)$. A calculation shows that $\sigma\left(\pi/(n-1)\right)-\sigma(0)>0$ for \[ a>a_3=\frac{-(1+4n+n^2)+n(1+n)\cos\left(\frac{\pi}{n-1}\right)+(n+1)\cos\left(\frac{n\pi}{n-1}\right)}{n(1+n)\cos\left(\frac{\pi}{n-1}\right)+(n+1)\cos\left(\frac{n\pi}{n-1}\right)-(n+1)^2},\] which is also the solution of the equation $\displaystyle \sigma\left(\pi/(n-1)\right)=\sigma(0)$. Also, $a_3$ belongs to the interval $(2(1+n^2)/(1+n)^2,a_1)$. Hence, $\sigma(\pi/(n-1))$ is minimum for $(2(1+n^2)/(1+n)^2,a_3)$ and $\sigma(0)$ is minimum for $(a_3,2)$.
 	\end{proof}
 	\section{Coefficient Estimates}
In this section, we will compute bounds on the coefficients for function in class $\Snl.$ The proof will use the following estimates (see \cite{KEOGH}, \cite{SZYNAL}, \cite{RAVI2}, respectively) for the class of analytic functions $p(z)=1+c_1z+c_2z^2+\cdots$ such that $\RE p(z)>0$ for all $z\in\disc.$
 	\begin{lemma}\label{lem1}
 		For $p(z)=1+c_1z+c_2z^2+\cdots\in\mathcal{P},$ then the following estimates holds.
 		\begin{itemize}
 			\item [(i)] $|c_2-vc_1^2|\leq 2\max\{1,|2v-1|\},$
 			\item[(ii)] $|c_3-2\beta c_1c_3+\delta c_1^3|\leq 2$ if $0\leq\beta\leq 1$ and $\beta(2\beta-1)\leq\delta\leq\beta,$
 			\item[(iii)] $|\gamma c_1^4+ac_2^2+2\alpha c_1c_3-(3/2)\beta c_1^2c_2-c_4|\leq 2, $ when $0<\alpha<1,\,0<a<1$ and $8a(1-a)((\alpha\beta-2\gamma)^2+(\alpha(a+\alpha)-\beta)^2)+\alpha(1-\alpha)(\beta-a\alpha)^2\leq 4\alpha^2(1-\alpha)^2a(1-a).$		
 		\end{itemize}
 	\end{lemma}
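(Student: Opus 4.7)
The three parts of this lemma are classical estimates from the Carathéodory-function literature, not new results, so my plan is to outline the standard parametric-representation argument and note that the detailed verifications appear in the cited references. Part (i) is the Fekete--Szeg\H{o} type inequality of Keogh and Merkes \cite{KEOGH}, part (ii) is the cubic-coefficient functional bound of Prokhorov and Szynal \cite{SZYNAL}, and part (iii) is the quartic-coefficient estimate of Ravichandran and collaborators \cite{RAVI2}. In each case the target functional is invariant, up to modulus, under the rotation $p(z)\mapsto p(e^{i\theta}z)$, so one may normalize $c_1\in[0,2]$ at the outset.

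The uniform framework is the Libera--Zlotkiewicz parametrization of $\mathcal{P}$: once $c_1$ is fixed, one writes
\[
2c_2 = c_1^2 + (4-c_1^2)\,x, \qquad
4c_3 = c_1^3 + 2c_1(4-c_1^2)x - c_1(4-c_1^2)x^2 + 2(4-c_1^2)(1-|x|^2)y,
\]
together with an analogous five-term formula for $c_4$ involving a further parameter, where $x,y,\cdot\in\overline{\disc}$ run independently. Substituting into the left-hand side of (i), (ii), or (iii) turns each inequality into a bound on a polynomial in $c_1$, $|x|$, $|y|$ and the remaining modulus, plus the arguments of $x,y$. For (i) the optimization is one-dimensional once $c_1$ is fixed, and elementary calculus produces the piecewise maximum $2\max\{1,|2v-1|\}$. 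For (ii) and (iii) the auxiliary parameters $y$ (and the third parameter) enter only linearly or quadratically, so one maximizes over them first by inspecting the modulus of a low-degree polynomial, and then successively over $|x|\in[0,1]$ and $c_1\in[0,2]$.

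The hypotheses $0\le\beta\le 1$ and $\beta(2\beta-1)\le\delta\le\beta$ in (ii), and the quartic discriminant-type inequality
\[
8a(1-a)\bigl((\alpha\beta-2\gamma)^2+(\alpha(a+\alpha)-\beta)^2\bigr)+\alpha(1-\alpha)(\beta-a\alpha)^2\le 4\alpha^2(1-\alpha)^2 a(1-a)
\]
in (iii), are engineered precisely so that this cascade of maximizations collapses to the value $2$, attained at the endpoints $c_1=0$ or $c_1=2$. The main obstacle — and the reason the hypotheses take such an ornate form — is ruling out interior critical points of the parameter cube that would beat the boundary value: one has to verify that under the stated sign conditions, every stationary point of the reduced polynomial lies below the boundary maximum. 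These delicate sign computations are carried out in full in \cite{SZYNAL} and \cite{RAVI2}, so the present lemma follows by direct quotation of those results.
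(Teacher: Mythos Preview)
Your proposal is correct and matches the paper's treatment: the paper does not prove this lemma at all but simply quotes it from \cite{KEOGH}, \cite{SZYNAL}, and \cite{RAVI2}, exactly as you do. Your added sketch of the Libera--Zlotkiewicz parametrization and the cascade-of-maximizations argument is accurate background but goes beyond what the paper itself supplies.
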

 	\begin{theorem}
 		If $f(z)=z+a_2z^2+a_3z^3+\cdots\in\mathcal{S}^*_{n\mathcal{L}},$ then $|a_2|\leq n/(n+1),\, |a_3|\leq n/(2(n+1)),\,|a_4|\leq n/(12(n+1))$ and $|a_5|\leq n/(4(n+1)).$ All the estimates are best possible.
 	\end{theorem}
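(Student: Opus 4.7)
The plan is to follow the standard Ma--Minda coefficient machinery. Since $f \in \Snl$, the subordination $zf'(z)/f(z) \prec \varphi_{n\mathcal{L}}(z)$ gives an analytic Schwarz function $w$ with $w(0)=0$ and $|w(z)|<1$ on $\disc$ such that $zf'(z)/f(z) = \varphi_{n\mathcal{L}}(w(z))$. I parametrize $w$ through a Carath\'eodory function $p(z) = (1+w(z))/(1-w(z)) = 1 + c_1 z + c_2 z^2 + \cdots \in \mathcal{P}$, so the Taylor coefficients $w_k$ become explicit polynomials in $c_1,\dots,c_k$. This reduces the desired bounds on $a_2,\dots,a_5$ to bounds on polynomial functionals of $c_1,\dots,c_4$ that are supplied by \lemref{lem1}.

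The first step is to equate coefficients in $zf'(z) = f(z)\cdot \varphi_{n\mathcal{L}}(w(z))$, producing the classical relations $P_1 = a_2$, $P_2 = 2a_3 - a_2^2$, $P_3 = 3a_4 - 3a_2 a_3 + a_2^3$, and $P_4 = 4a_5 - 4a_2 a_4 - 2a_3^2 + 4a_2^2 a_3 - a_2^4$, where $P_k$ denotes the $k$-th Taylor coefficient of $\varphi_{n\mathcal{L}}(w(z))$. Because $\varphi_{n\mathcal{L}}(z) = 1 + \tfrac{n}{n+1}z + \tfrac{1}{n+1}z^n$ with $n \geq 4$, for $k \leq 3$ (and for $k=4$ when $n \geq 5$) one simply has $P_k = \tfrac{n}{n+1}w_k$, while for $n=4$, $k=4$ an extra term $\tfrac{1}{5}w_1^4$ appears. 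Inverting these relations recursively expresses each $a_k$ as a polynomial in $c_1,\dots,c_{k-1}$ with coefficients depending on $n$.

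The bounds then follow by applying \lemref{lem1} in turn. For $|a_2|$ the estimate is immediate from $|c_1|\leq 2$. The expression for $a_3$ collapses to a Keogh--Merkes functional $|c_2 - v c_1^2|$ with $v = 1/(2(n+1))$; since $|2v-1|<1$, part (i) gives $n/(2(n+1))$. For $|a_4|$, rearrangement produces a functional of the form $|c_3 - 2\beta c_1 c_2 + \delta c_1^3|$ for specific rational functions $\beta(n)$, $\delta(n)$, and the key technical step is to verify the hypotheses $0 \leq \beta \leq 1$ and $\beta(2\beta-1) \leq \delta \leq \beta$ uniformly in $n \geq 4$ so that part (ii) applies. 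For $|a_5|$, the analogous rearrangement matches the functional in part (iii), and I expect the main obstacle to be checking the intricate condition $8a(1-a)((\alpha\beta-2\gamma)^2 + (\alpha(a+\alpha)-\beta)^2) + \alpha(1-\alpha)(\beta-a\alpha)^2 \leq 4\alpha^2(1-\alpha)^2 a(1-a)$ for the specific $\alpha, a, \beta, \gamma$ arising from $\varphi_{n\mathcal{L}}$, together with a separate verification in the case $n=4$ where the $w_1^4$ correction modifies the functional.

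Sharpness will be exhibited by members of the family $g_\eta(z) = z\exp\!\bigl(\int_0^z (\varphi_{n\mathcal{L}}(\eta(t))-1)/t\,dt\bigr)$ for suitably chosen Schwarz functions $\eta$. The function $f_{n\mathcal{L}}$, corresponding to $\eta(t)=t$ and $c_1=2$, attains equality in the bound for $|a_2|$, and for the remaining coefficients appropriate choices of $\eta$ (typically pure powers $\eta(t)=t^k$) force the relevant Carath\'eodory moments to realize the extremal configuration of the part of \lemref{lem1} used for that coefficient, thereby completing the proof.
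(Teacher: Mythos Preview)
Your proposal follows exactly the paper's route: parametrize the Schwarz function through a Carath\'eodory function $p\in\mathcal{P}$, express $a_2,\dots,a_5$ as polynomials in $c_1,\dots,c_4$, and bound them via the three parts of \lemref{lem1} (with the same parameter choices $v=1/(2(n+1))$ for $a_3$, etc.), exhibiting sharpness through the functions $f_i(z)=z\exp\bigl(\int_0^z(\varphi_{n\mathcal{L}}(t^{i-1})-1)/t\,dt\bigr)$. Your remark that for $n=4$ the term $z^n/(n+1)$ contributes an extra $w_1^4$ to the $a_5$ calculation is in fact a point the paper's single formula for $a_5$ silently passes over, so your plan to verify that case separately is a refinement of, not a departure from, the paper's argument.
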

 \begin{proof}
 	Let $p(z)=zf'(z)/f(z)=1+b_1z+b_2z^2+\cdots\in\mathcal{P}.$ A simple computation gives
 	\begin{align}\label{eqn2.1}
 	(n-1)a_n=\sum_{k=1}^{n-1}b_ka_{n-k},\, \text{for }n>1.
 	\end{align}
 	Since $\varphi_{n\mathcal{L}}$ is univalent and $p\prec\varphi_{n\mathcal{L}}$, we get
 		\[p_1(z)=\frac{1+\varphi^{-1}_{n\mathcal{L}}\left(p(z)\right)}{1-\varphi^{-1}_{n\mathcal{L}}\left(p(z)\right)}=1+c_1z+c_2z^2+c_3z^3\cdots\in\mathcal{P}.\]
 		Thus,
 		\[p(z)=\varphi_{n\mathcal{L}}\left(\frac{p_1(z)-1}{p_1(z)+1}\right).\]
 A calculation using (\ref{eqn2.1}) gives
 \begin{align*}
 a_2&=b_1=\frac{n}{2(n+1)}c_1\\
 a_3&=\frac{n}{8(n+1)^2}\left(16\left(n+1\right)c_2-c_1^2\right)\\
 a_4&=\frac{n}{48(n+1)^3}\left(\left(n+2\right)c_1^3-2\left(n^2+5n+4\right)c_1c_2+8\left(n^2+2n+1\right)c_3\right)\\
 a_5&=\frac{n}{384}\left(\frac{48}{n+1}c_4-\frac{(2+n)(3+2n)}{(n+1)^4}c_1^4+\frac{4(n^2+7n+9)}{(n+1)^3}c_1^2c_2-\frac{12(n+2)}{(n+1)^2}c_2^2-\frac{16(n+3)}{(n+1)^2}c_1c_3\right).
 \end{align*}
Since $|c_i|\leq 2,$ for all $i$, we get $|a_2|\leq n/(n+1).$ Using Lemma \ref{lem1} (i) for $v=1/(2(n+1)),$ we obtain
\[|a_3|\leq \frac{n}{4(n+1)}\left|c_2-\left(\frac{1}{2(n+1)}\right)c_1^2\right|\leq\frac{n}{2(n+1)}.\]
Now,
\begin{align*}
|a_4|&=\frac{n}{48(n+1)^3}\left|(n+2)c_1^3-2(n+1)(n+4)c_1c_2+8(n+1)^2c_3\right|\\
&=\frac{n}{48(n+1)}\left|\frac{(n+2)}{8(n+1)^2}c_1^3-\frac{n+4}{4(n+1)}c_1c_2+c_3\right|.
\end{align*}
Let us take $\beta=(n+4)/(8(n+1))$ and $\delta=(n+2)/(8(n+1)^2)$. For $n\geq 4,$ it can be easily seen that $0\leq\beta\leq 1$ and $\delta\leq \beta.$ Also, $\beta(2\beta-1)=-3n(n+4)/(32(n+1)^2)<0<\delta\leq\beta.$ Thus, by Lemma \ref{lem1}(ii), $|a_4|\leq n/(12(n+1)).$ Lastly,
\[|a_5|=\frac{n}{8(n+1)}\left|\frac{(n+2)(3+2n)}{48(n+1)^3}c_1^4-\frac{n^2+7n+9}{12(n+1)^2}c_1^2c_2+\frac{n+2}{4(n+1)}c_2^2+\frac{n+3}{3(n+1)}c_1c_3-c_4\right|.\]
We shall show that $\beta=(n^2+7n+9)/(18(n+1)^2),\,a=(n+2)/(4(n+1)),\,\alpha=(n+3)/(6(n+1))$ and $\gamma=(n+2)(2n+3)/(48(n+1)^3)$ satisfies the conditions of Lemma \ref{lem1} (iii). For $n\geq 4,$ it is clear that $0<a,\alpha<1.$ Now, the condition $8a(1-a)((\alpha\beta-2\gamma)^2+(\alpha(a+\alpha)-\beta)^2)+\alpha(1-\alpha)(\beta-a\alpha)^2- 4\alpha^2(1-\alpha)^2a(1-a)$ reduces to $-(5832 + 46656 n + 156564 n^2 + 286536 n^3 + 310942 n^4 + 203428 n^5 + 77806 n^6 + 15816 n^7 + 1301 n^8)/(93312 (1 + n)^8)\leq 0.$ This holds for all $n\in\mathbb{N}.$ Since $\alpha,\beta,\gamma$ and $a$ satisfies all the conditions of Lemma \ref{lem1}(iii), $|a_5|\leq n/(4(n+1)).$ For sharpness, the following functions are extremal for the initial coefficients $a_i(i=2,3,4,5)$ and are given by
\[f_i(z)=z\exp\left(\int_{0}^{z}\frac{\varphi_{n\mathcal{L}}(t^{i-1})-1}{t}dt\right),\quad i=2,3,4,5.\qedhere\]
 \end{proof}
 	\section{Inclusion Relations}
 	This section deals with inclusion relation between the class $\Snl$ and various classes which depends on a parameter. For instance, $\mathcal{S}\mathcal{S}^*(\beta)\,(0<\beta<1)$ is the class characterized by $|\arg(zf'(z)/f(z))|<\beta\pi/2,$ $\mathcal{S}^*[A,B]\,(-1\leq B<A\leq 1)=\mathcal{S}^*(1+Az)/(1+Bz)$ is the class of Janowski starlike functions, $\mathcal{S}^*(\alpha)=\mathcal{S}^*[1-2\alpha,-1]$ is the class of starlike function or order $\alpha\,(0\leq \alpha<1).$ Sokol\cite{SOKOL1} introduced the class $\mathcal{S}^*(\sqrt{1+cz})$ which is associated with right loop of the Cassinian ovals given by $(u^2+v^2)^2-2(u^2-v^2)=c^2-1,$ for $0<c\leq 1.$ For $c=1,$ this class reduced to the class $\mathcal{S}^*_L$. Also, for $0\leq \alpha<1,$ the generalized class $\mathcal{S}\mathcal{L}^*(\alpha)=\mathcal{S}^*(\alpha+(1-\alpha)\sqrt{1+z})$ was introduced by Khatter \emph{et.al} \cite{KHATTER} and this class also reduces to $\mathcal{S}^*_L$ for $\alpha=0.$ Another interesting class $\mathcal{M}(\beta)$ of analytic functions such that $\RE(zf'(z)/f(z)) < \beta,$ for $\beta>1$, was studied by Uralegaddi \cite{URAL}. The next theorem gives various inclusion relation of the class $\Snl$ with these mentioned classes.
 	
 	\begin{theorem}\label{inclusion}
 		For $\mathcal{S}^*_{n\mathcal{L}},$ the following inclusion relations holds:
 		\begin{itemize}
 			\item [(a)] $\mathcal{S}^*_{n\mathcal{L}}\subset\mathcal{S}^*(\alpha),$ where  $0\leq \alpha\leq \alpha_0,$ for $(n+1)\alpha_0= 1+\cos(nt_0)+n(1+\cos t_0)$ and $t_0=n\pi/(n+1).$
 			\item[(b)] $\mathcal{S}^*_{n\mathcal{L}}\subset\mathcal{S}\mathcal{S}^*(\beta),$ for $\beta\geq 2\beta_0/\pi,$ where $\tan\beta_0=\sin\left(\pi/n\right)/\left(1-\cos\left(\pi/n\right)\right).$
 			\item[(c)] $\mathcal{S}\mathcal{L}^*(\alpha)\subset \mathcal{S}^*_{n\mathcal{L}}$ for $\alpha\geq 2/(n+1).$
 			\item[(d)] $\mathcal{S}^*\left(\sqrt{1+cz}\right)\subset\mathcal{S}^*_{n\mathcal{L}},$ for $0<c\leq 1-4/(n+1)^2.$
 			\item[(e)] $\mathcal{S}^*[1-\alpha,0]\subset\Snl,$ for $2/(n+1)\leq\alpha\leq1$.
 			\item[(f)] $\mathcal{S}^*[\alpha,-\alpha]\subset\Snl,$ for $0\leq\alpha\leq|(t^n+tn)/(2+t^n+2n+tn)|,$ where $t=e^{i\pi/(n-1)}.$
 			\item[(g)] $\Snl\subset\mathcal{S}^*[1,-(M-1)/M],$ for $M\geq 1.$
 			\item[(h)] $\Snl\subset\mathcal{M}(\beta),$ for $\beta>2.$
 		\end{itemize}
 	\end{theorem}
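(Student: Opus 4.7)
The plan is to exploit the basic Ma--Minda principle that $\mathcal{S}^*(\psi_1) \subseteq \mathcal{S}^*(\psi_2)$ precisely when $\psi_1 \prec \psi_2$, which, because both $\psi_i$ are univalent with $\psi_i(0)=1$, is equivalent to the planar containment $\psi_1(\disc) \subseteq \psi_2(\disc)$. Each of the eight inclusions thereby reduces to a region inclusion. The outgoing parts (a), (b), (g), (h) demand that $\varphi_{n\mathcal{L}}(\disc)$ sit inside a half-plane, sector, or Janowski disk; the incoming parts (c), (d), (e), (f) require a disk or lemniscate-type region to sit inside $\varphi_{n\mathcal{L}}(\disc)$, and for these \lemref{lemma} is the chief tool since it already delivers the largest disk centered at any real point $a$ that fits inside $\varphi_{n\mathcal{L}}(\disc)$.

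For (a) I would set $u(t)=\RE\,\varphi_{n\mathcal{L}}(e^{it}) = 1+(n\cos t+\cos nt)/(n+1)$, differentiate to $u'(t)=-(2n/(n+1))\sin((n+1)t/2)\cos((n-1)t/2)$, and identify $t_0=n\pi/(n+1)$ as the global minimizer on $[0,\pi]$; the value $u(t_0)$ is the stated $\alpha_0$. Part (h) is the companion computation: $u$ is maximized at $t=0$ with value $2$, so $\varphi_{n\mathcal{L}}(\disc)\subseteq\{\RE w<\beta\}$ whenever $\beta>2$. For (b), I would maximize $\arg\varphi_{n\mathcal{L}}(e^{it})$ over $t\in[0,\pi]$; the stated formula $\tan\beta_0=\sin(\pi/n)/(1-\cos(\pi/n))=\cot(\pi/(2n))$ should emerge as the slope of the line from the origin to a boundary point of tangency (it rearranges to $\beta_0=(n-1)\pi/(2n)$). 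Part (g) is the assertion that $\varphi_{n\mathcal{L}}(\disc)$ lies in the Janowski disk $|w-M|\leq M$, which follows by expanding $|\varphi_{n\mathcal{L}}(e^{it})-M|^2$ and bounding it by $M^2$ for all $t$ whenever $M\geq 1$.

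For the disk-type incoming inclusions I would invoke \lemref{lemma} directly. In (e), $\mathcal{S}^*[1-\alpha,0]$ corresponds to $\psi(z)=1+(1-\alpha)z$, whose image is $|w-1|<1-\alpha$; \lemref{lemma} with $a=1$ gives $r_1=(n-1)/(n+1)$, and $1-\alpha\leq(n-1)/(n+1)$ is exactly $\alpha\geq 2/(n+1)$. In (f), the Janowski image is the disk centered at $(1+\alpha^2)/(1-\alpha^2)$ of radius $2\alpha/(1-\alpha^2)$; taking $a=(1+\alpha^2)/(1-\alpha^2)>1$ and demanding $2\alpha/(1-\alpha^2)\leq\sqrt{\sigma(\pi/(n-1))}$ rearranges, via the explicit formula for $\sigma(\pi/(n-1))$, to the stated bound in terms of $t=e^{i\pi/(n-1)}$. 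For (c) and (d), whose target subordinators are lemniscate-type, I would argue that $\alpha+(1-\alpha)\sqrt{1+z}$ maps $\disc$ into the closed disk $|w-1|\leq 1-\alpha$, reducing (c) to (e) and yielding $\alpha\geq 2/(n+1)$; for (d) the image of $\sqrt{1+cz}$ is the right half of the Cassinian oval with real-axis extent $[\sqrt{1-c},\sqrt{1+c}]$, and the threshold $c\leq 1-4/(n+1)^2$ is precisely the condition that this interval, and hence the full loop, fit inside the horizontal extent $[2/(n+1),2]$ guaranteed by \lemref{lemma}.

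The main obstacle will be ensuring that the distinguished critical points in (a) and (b) actually yield the global extrema on $[0,\pi]$, because $u'(t)$ also vanishes at the cusp angles $t_k=(2k-1)\pi/(n-1)$; a case-by-case comparison analogous to the one appearing in the proof of \lemref{lemma} will be needed. The algebra in (f) is heavy but mechanical; the delicate check in (c) and (d) is that not merely the extremal real points but the entire lemniscate loop lies inside $\varphi_{n\mathcal{L}}(\disc)$, which ultimately reduces, by the disk-containment supplied by \lemref{lemma}, to verifying the endpoint conditions stated.
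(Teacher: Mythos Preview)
Your plan matches the paper's proof almost part for part: reduce each inclusion to a planar containment via the Ma--Minda principle, then for (a), (b) locate the boundary extremum of $\RE\varphi_{n\mathcal{L}}(e^{it})$ and $\arg\varphi_{n\mathcal{L}}(e^{it})$ at $t_0=n\pi/(n+1)$ and $t_1=(n-1)\pi/n$ respectively, and for the incoming parts use disk-in-region arguments. A few minor tactical differences are worth noting. For (g) the paper bypasses your proposed expansion of $|\varphi_{n\mathcal{L}}(e^{it})-M|^2$ with a one-line triangle inequality, $|1+nz/(n+1)+z^n/(n+1)-M|\le |1-M|+1=M$ for $M\ge1$, which is quicker. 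For (c), (d), (e) the paper does not invoke \lemref{lemma} at all; it simply records the real-axis extent of each subordinator's image (citing \cite{KHATTER} for (c)) and checks that it lies in $[2/(n+1),2]$. Your route via \lemref{lemma} for (e), and your reduction of (c) to (e) through the elementary bound $|\sqrt{1+z}-1|\le|z|<1$, is arguably cleaner and more self-contained, since it actually establishes full planar containment rather than just the real-slice inclusion. For (f) the paper, like you, pins the constraint to the cusp at angle $\pi/(n-1)$, though it phrases it as solving $(1+\alpha)/(1-\alpha)=\varphi_{n\mathcal{L}}(e^{i\pi/(n-1)})$ rather than comparing center/radius to $\sigma(\pi/(n-1))$. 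Your caveat about confirming that $t_0$ and $t_1$ are global (not merely local) extrema is well taken; the paper handles this only by a second-derivative check at the candidate point.
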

 \begin{proof}
 	(a) Let $f\in\Snl.$ Then \[\RE\left(\frac{zf'(z)}{f(z)}\right)>\underset{|z|=1}{\min}\RE\left(\varphi_{n\mathcal{L}}(z)\right).\]
 	
 	For $z=e^{it}$,
 	\[ \RE\left(\varphi_{n\mathcal{L}}(e^{it})\right)=1+\frac{n\cos t}{n+1}+\frac{\cos (nt)}{n+1}:=h(t),\]
 	where $t\in(-\pi,\pi).$ To compute the minimum value of $h(t)$, we shall obtain all the possible values of $t$ such that $h'(t)=0$ and $h''(t)>0.$ For $t_0=\pm n\pi/(n+1),$
 	\begin{align*}
 	h'(t_0)=\mp\frac{n\left(\sin\left(\displaystyle\frac{n\pi}{n+1}\right)+\sin\left(\displaystyle\frac{n^2\pi}{n+1}\right)\right)}{n+1}.
 	\end{align*}
 	Since $n$ is even, $h'(t_0)=0.$ Also,
 	\[h''(t_0)=\frac{-n\left(\cos\left(\displaystyle\frac{n\pi}{n+1}\right)+n\cos\left(\displaystyle\frac{n^2\pi}{n+1}\right)\right)}{n+1}>0,\]
 	for $n$ even.
 	 Hence,
 	\[\underset{|z|=1}{\min}\RE\left(\varphi_{n\mathcal{L}}(z)\right)=\RE\left(\varphi_{n\mathcal{L}}(e^{it_0})\right)=1+\frac{n\cos t_0}{n+1}+\frac{\cos(nt_0)}{n+1}=\alpha_0.\]
 	Thus $f\in\Snl\subset\mathcal{S}^*(\alpha),$ for $0<\alpha\leq \alpha_0.$ For instance, the curve $\gamma_1:\,\RE w=\alpha_0$ in Figure \ref{fig1} shows the result is best possible for $n=8$.\\
 	\begin{tabular}{cl}

 	\begin{tabular}{c}\label{fig1}
 	\includegraphics[width=2.5in]{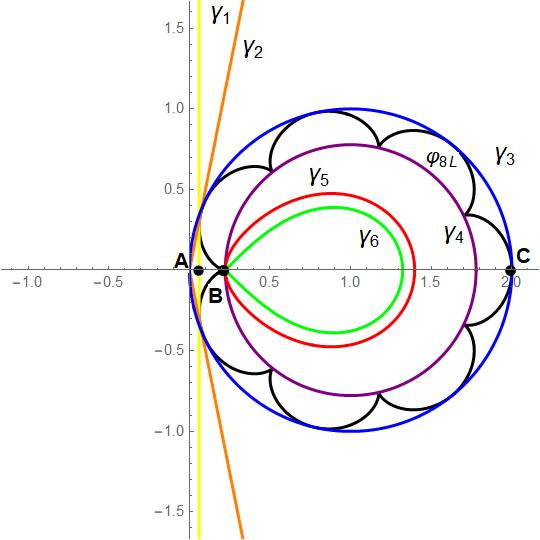}
 	\end{tabular}
 & \begin{tabular}{l}
 	\parbox{0.4\linewidth}{
 	$\gamma_1:\,\RE w =1-\cos\left(\displaystyle\frac{\pi}{9}\right)$\\
 	$\gamma_2:\,|\arg w|=\tan^{-1}(5.0273)$\\
 	$\gamma_3: \,|w-1|=1$\\
 	$\gamma_4: |w-1|=\displaystyle\frac{7}{9}$\\
 	$\gamma_5: |w^2-1|=\displaystyle\frac{77}{81}$\\
 	$\gamma_6: \left|\left(\displaystyle\frac{9w-2}{7}\right)^2-1\right|=1$\\
 	$\text{A}:\, 1-\cos\left(\displaystyle\frac{\pi}{9}\right)$\\
 	$\text{B}: \,\displaystyle\frac{2}{9}$\\

 	$\text{C}:\,2$
 }
 \end{tabular}\\
\end{tabular}
 		\begin{center}
 			{Figure 1. Inclusion Relation for class $\Snl$}
 		\end{center}
 \par (b) For $f\in\Snl,$
 \begin{align*}
 \left|\arg\left(\frac{zf'(z)}{f(z)}\right)\right|&<\underset{|z|=1}{\max}\arg\left(\varphi_{n\mathcal{L}}(z)\right)\\&=\underset{t\in(-\pi,\pi]}{\max}\arg\left(\varphi_{n\mathcal{L}}(e^{it})\right)\\ &=\underset{t\in(\pi,\pi]}{\max}\tan^{-1}\left(\frac{n\sin t+\sin(nt)}{n+1+n\cos t+\cos(nt)}\right) \\ &= \tan^{-1}\left(\underset{t\in(-\pi,\pi]}{\max}g(t)\right),
 \end{align*}
 where $g(t)=(n\sin t+\sin(nt))/(n+1+n\cos t+\cos(nt)).$ It is sufficient to compute the maximum value of $g(t),$ for $-\pi<t\leq\pi.$ For $t_1= (n-1)\pi/n,$
 \begin{align*}
 g'(t_1)=\frac{4n(n+1)\cos\left(\displaystyle\frac{(n-1)^2\pi}{2n}\right)\sin\left(\displaystyle\frac{\pi}{2n}\right)\sin\left(\displaystyle\frac{n\pi}{2}\right)}{\left(-1-n+n\cos\left(\displaystyle\frac{\pi}{n}\right)\cos\left(n\pi\right)\right)}=0,
 \end{align*}
as $n$ is even. A simple computation shows that $g''(t_1)<0$ for $n$ even. Hence,
\[\left|\arg\left(\frac{zf'(z)}{f(z)}\right)\right|<\tan^{-1}\left(g(t_1)\right)=\tan^{-1}\left(\frac{\sin\left(\pi/n\right)}{1-\cos(\pi/n)}\right)=\beta_0.\]
So, $\Snl\subset\mathcal{S}\mathcal{S}^*(\beta),$ where $\beta\geq 2\beta_0/\pi.$ Sharpness for the case $n=8$ is depicted by the curve $\gamma_2:\,\arg w=\tan^{-1}\left(\sin(\pi/8)/(1-\cos(\pi/8))\right)$ in the Figure \ref{fig1}.
 	\par(c) To show the function $f\in\mathcal{S}^*_L(\alpha)$ lies in the class $\Snl,$ we will use the \cite[Lemma 2.1,pp 236]{KHATTER} that gives
 	\[\alpha<\RE\left(\frac{zf'(z)}{f(z)}\right)<\alpha+(1-\alpha)\sqrt{2}.\]
 	The function $f\in\Snl$ if either $\alpha\geq 2/(n+1)$ or $\alpha+(1-\alpha)\sqrt{2}\leq 2.$ Thus, $f\in\Snl(n\geq 4)$ for $\alpha\geq 2/(n+1).$ The case $n=8$ is illustrated in Figure \ref{fig1} by curve $\gamma_5.$
 	\par(d) Let $f\in\mathcal{S}^*(\sqrt{1+cz})\,(0<c<1).$ Then the quantity $zf'(z)/f(z)\prec \sqrt{1+cz}$ and
 	\[\sqrt{1-c}<\RE\left(\frac{zf'(z)}{f(z)}\right)<\sqrt{1+c}.\]
 	Note that $\sqrt{1+c}<\sqrt{2}<2.$ Thus the function $f\in\Snl$ if $\sqrt{1-c}\geq 2/(n+1).$ This gives $c\leq 1-4/(n+1)^2.$ To see sharpness for $n=8,$ see the curve $\gamma_6$ in Figure \ref{fig1}.
 	\par (e) Proceeding as in part (d), we get the function $f\in\mathcal{S}^*[1-\alpha,0]$ lies in the class $\Snl$ if
 	\[\frac{2}{n+1}\leq \alpha<\RE\left(\frac{zf'(z)}{f(z)}\right)<2-\alpha\leq 2,\]
 	which holds for  $\alpha\geq 2/(n+1).$ (See $\gamma_4$ in Figure \ref{fig1} )
 	\par (f) Let $f\in\mathcal{S}^*[\alpha,-\alpha].$ In order to obtain condition on $\alpha$ such that $f\in\Snl,$ we compute the solution of the equation $(1+\alpha r)/(1-\alpha r)=\varphi_{n\mathcal{L}}(e^{i\pi/(n-1)}),$ which simplifies to $\alpha\leq |\left(e^{in\pi/(n-1)}+ne^{i\pi/(n-1)}\right)/\left(2+2n+e^{in\pi/(n-1)}+ne^{i\pi/(n-1)}\right)|.$
 	\par (g) Let $f\in\Snl.$ Then for $z\in\disc,$
 	\begin{align*}
 	\left|\frac{zf'(z)}{f(z)}-M\right|&\leq\left|1+\frac{nz}{n+1}+\frac{z^n}{n+1}-M\right|\\&\leq \left|1-M\right|+\frac{n|z|}{n+1}+\frac{|z|^n}{n+1}\\
 	&< |1-M|+\frac{n}{n+1}+\frac{1}{n+1}\\
 	&=|1-M|+1
 \end{align*}
 Thus, for $M\geq 1,$ $|zf'(z)/f(z)-M|< M.$ For $n=8,$ sharpness for this class can be seen by curve $\gamma_3$ in Figure \ref{fig1}.
\end{proof}
 \begin{theorem}
 	The class $\mathcal{S}^*[A,B]\subset\Snl,\,-1\leq B<A\leq1 ,$ if one of the following conditions holds.
 	\begin{itemize}
 		\item [(a)] $2(1-B^2)\leq (n+1)(1-AB^2)\leq (n+1)(1-B^2)$ and $(n+1)A\leq 2B+n-1,$
 		\item[(b)] $(1-B^2)\leq 1-AB^2\leq a_3(1-B^2)$ and $A\leq \sigma(\pi/(n-1))(1-B^2)+B,$
 		\item[(c)] $a_3(1-B^2)\leq1-AB^2\leq 2(1-B^2)$ and $a\leq 2B+1,$
 	\end{itemize}
 where
 \[\sigma(t)=\left(\frac{1-AB^2}{1-B^2}-\left(1+\frac{n \cos t}{n+1}+\frac{\cos (nt)}{n+1}\right)\right)^2+\left(\frac{n\sin t}{n+1}+\frac{\sin (nt)}{n+1}\right)^2\]
 and $a_3$ is the point lying in interval $(1,2)$ such that $\sigma(0)=\sigma(\pi/(n-1)).$
 \end{theorem}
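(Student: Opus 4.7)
The strategy is to translate membership in $\mathcal{S}^*[A,B]$ into a disk-containment problem and then apply \lemref{lemma} case by case. The starting point is the standard fact that, for $|B|<1$, the image $\{(1+Az)/(1+Bz):z\in\disc\}$ is an open disk with real center $c$ and radius $\rho=(A-B)/(1-B^2)$; the statement of the theorem records this center through the quantity that appears inside the definition of $\sigma$. Hence $f\in\mathcal{S}^*[A,B]$ is equivalent to $zf'(z)/f(z)\in\{w:|w-c|<\rho\}$ for every $z\in\disc$, and therefore $\mathcal{S}^*[A,B]\subset\Snl$ as soon as this open disk lies inside $\varphi_{n\mathcal{L}}(\disc)$.

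The question of when a disk centered at a real point $c$ of radius $\rho$ fits inside $\varphi_{n\mathcal{L}}(\disc)$ is exactly what \lemref{lemma} answers: the maximal inscribed radius $r_c$ is given by a three-branch formula according to whether $c$ lies in $(2/(n+1),1]$, in $[1,a_3)$, or in $(a_3,2)$. The required inclusion is therefore equivalent to the combined conditions ``$c$ lies in the relevant subinterval'' and ``$\rho\le r_c$,'' and splitting by the three branches produces precisely the three items (a), (b), (c) of the theorem.

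The remaining work is algebraic bookkeeping. In case (a), $r_c=c-2/(n+1)$, and after clearing denominators the inequality $\rho\le r_c$ factors to $(n+1)A\le 2B+n-1$, while the range on $c$ delivers the double inequality of the statement. In case (b), $r_c=\sigma(\pi/(n-1))$ is used directly, yielding $A\le\sigma(\pi/(n-1))(1-B^2)+B$, with the location of $c$ producing the corresponding double inequality. In case (c), $r_c=2-c$ simplifies $\rho\le r_c$ to $A\le 2B+1$. The only genuine obstacle is notational: one must identify the center of the Janowski disk with the parameter $a$ of \lemref{lemma} and check that the $\sigma$ and $a_3$ defined in this theorem match those of the lemma under that identification, so that the three subintervals split the parameter range cleanly. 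Once that alignment is made, each case reduces to a one-line manipulation and no further analytic work is needed.
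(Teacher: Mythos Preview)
Your proposal is correct and follows essentially the same approach as the paper: identify the Janowski image as the disk with center $a=(1-AB)/(1-B^2)$ and radius $(A-B)/(1-B^2)$, then invoke \lemref{lemma} branch by branch, so that the three cases $2/(n+1)<a\le 1$, $1\le a<a_3$, $a_3<a<2$ yield parts (a), (b), (c) after clearing denominators. The paper's proof is equally terse and performs exactly the same reduction.
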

\begin{proof}
	Let $f\in\mathcal{S}^*[A,B].$ Then the image of $zf'(z)/f(z)$ lies inside the disk
	\[\left|\frac{zf'(z)}{f(z)}-\frac{1-AB}{1-B^2}\right|\leq\frac{A-B}{1-B^2},\] with center $a:=(1-AB)/(1-B^2)$ and radius $r_a:=(A-B)/(1-B^2).$ To show that this disk lies in the domain $\varphi_{n\mathcal{L}}(\disc),$ we shall use the \lemref{lemma}. If $2/(n+1)<a\leq 1,$ then $r_a<a-2/(n+1)$ which is equivalent to part (a). For $1<a<a_3,$ the condition in (b) is obtained by solving $r_a\leq \sigma(\pi/(n-1)).$ Lastly, part (c) is equivalent to $r_a\leq 2-a,$ for $a_3<a< 2.$
\end{proof}
\section{$\Snl-$radius}
This section deals with the $\Snl-$radius for various known subclasses of starlike functions. MacGregor \cite{MAC,MAC1,MAC2} studied the class of $\mathcal{W}$ of functions $f\in\mathcal{A}$ such that $f(z)/z\in\mathcal{P},$ the class $\mathcal{F}_1$ of functions $f\in\mathcal{A}$ such that $\RE(f(z)/g(z))>0$ for some $g\in\mathcal{A}$ with $\RE(g(z)/z)>0$ and the class $\mathcal{F}_2$ of functions $f\in\mathcal{A}$ such that $|f(z)/g(z)-1|<1$ for some $g\in\mathcal{A}$ satisfying $\RE(g(z)/z)>0.$ An analytic function $p(z)=1+c_1z+c_2z^2+\ldots\in\mathcal{P}(\alpha),$ for $0\leq\alpha<1$ and $z\in\disc,$ satisfies
\begin{align}\label{eqn4.1}
	\left|\frac{zp'(z)}{p(z)}\right|\leq \frac{2r(1-\alpha)}{(1-r)(1+(1-2\alpha)r)},
\end{align}
for $|z|=r<1.$ Many classes are introduced by various authors for an appropriate choice of the function $\varphi$ in the class $\mathcal{S}^*(\varphi)$ defined by Ma and Minda \cite{MaMinda}. Some of the known classes inspired by Ma-Minda classes are $\mathcal{S}^*_L=\mathcal{S}^*(\sqrt{1+z}),\,\mathcal{S}^*_{RL}=\mathcal{S}^*(\sqrt{2}-(\sqrt{2}-1)\sqrt{(1-z)/(1+2(\sqrt{2}-1)z)}),\,\mathcal{S}^*_e=\mathcal{S}^*(e^z),\,\mathcal{S}^*_C=\mathcal{S}^*(1+4z/3+2z^2/3),\,\mathcal{S}^*_{\leftmoon}=\mathcal{S}^*(z+\sqrt{1+z^2}),\mathcal{S}^*_R=\mathcal{S}^*((k^2+z^2)/(k^2-kz))\,(k=\sqrt{2}+1),\,\mathcal{S}^*_{sin}=\mathcal{S}^*(1+\sin z),\,\mathcal{S}^*_{lim}=\mathcal{S}^*(1+\sqrt{2}z+z^2/2),\,\mathcal{S}^*_{SG}=\mathcal{S}^*(2/(1+e^{-z})),\mathcal{S}^*_{3\mathcal{L}}=\mathcal{S}^*(1+4z/5+z^4/5),\,\mathcal{S}^*_{EL}=\mathcal{S}^*(ke^z+(1-k)(1+z)),\,\mathcal{S}^*_{ne}=\mathcal{S}^*(1+z-z^3/3),\,\mathcal{S}^*(1+ze^z),\,\mathcal{S}^*(\cos z),\,\mathcal{S}^*(\cosh z),\,\mathcal{S}^*(1+\sinh^{-1}(z)),\,\mathcal{S}^*_{car}=\mathcal{S}^*(1+z+z^2/2).$ These classes are studied in \cite{GOEL,GUPTA,KANSH,MEND,MEND1,YUNUS,WANI2,SUSHIL,SIVA,SIVA1,CHO1,GANDHI,PSHARMA,SOKOL}. The class $\mathcal{B}\mathcal{S}(\alpha)=\mathcal{S}^*(1+(1-\alpha z^2))$\cite{KARGAR} is the class of functions $f\in\mathcal{A}$ such that $zf'(z)/f(z)\prec 1/(1-\alpha z^2),$ for $0<\alpha\leq 1$.
\begin{theorem}
	The $\Snl-$radius for various classes $\mathcal{M}(\beta)$ and $\mathcal{B}\mathcal{S}(\alpha)$ is as follows
	\begin{enumerate}
		\item[(a)]$\mathcal{R}_{\Snl}(\mathcal{M}(\beta))=\displaystyle\frac{n-1}{(2\beta-1)n+(2\beta-3)}.$
		\item[(b)] $\mathcal{R}_{\Snl}(\mathcal{B}\mathcal{S}(\alpha))=\displaystyle\frac{1+n+\sqrt{1+4\alpha+2n-8\alpha n+n^2+4\alpha n^2}}{2\alpha(1-n)}$
	\end{enumerate}
\end{theorem}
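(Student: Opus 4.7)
For each $r \in (0,1)$ my plan is to control the image $\{zf'(z)/f(z) : |z| \le r\}$ via the class's defining subordination, then determine the largest $r$ for which this image lies in $\varphi_{n\mathcal{L}}(\disc)$.

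Part (a): if $f \in \mathcal{M}(\beta)$ then $zf'(z)/f(z) \prec M_\beta(z) := (1-(2\beta-1)z)/(1-z)$ by the standard half-plane equivalence $(\beta - p)/(\beta - 1) \in \mathcal{P}$. Since $M_\beta$ is M\"obius, the image of $\{|z|\le r\}$ is a closed disk whose centre $c_r = (1+(1-2\beta)r^2)/(1-r^2)$ and radius $\rho_r = 2(\beta-1)r/(1-r^2)$ I would read off from the real-axis endpoints $M_\beta(\pm r)$. For $\beta > 1$ one has $c_r < 1$, so \lemref{lemma} in the regime $2/(n+1) < a \le 1$ applies and the sharp containment criterion becomes $\rho_r \le c_r - 2/(n+1)$. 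Clearing denominators yields the quadratic
\[
[2\beta(n+1)-(n+3)]\,r^2 + 2(n+1)(\beta-1)\,r - (n-1) \le 0,
\]
whose discriminant I expect to factor as $4((n+1)\beta-2)^2$, producing the single positive root $r^\ast = (n-1)/(n(2\beta-1)+(2\beta-3))$. Sharpness should follow from the extremal $f_0$ with $zf_0'(z)/f_0(z) = M_\beta(z)$: at $z = r^\ast$ one checks $M_\beta(r^\ast) = c_{r^\ast} - \rho_{r^\ast} = 2/(n+1) = \varphi_{n\mathcal{L}}(-1)$, a boundary point of the target.

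Part (b): since $\mathcal{B}\mathcal{S}(\alpha)$ is Kargar's Booth lemniscate starlike class, $zf'(z)/f(z) \prec \phi(z) := 1 + z/(1-\alpha z^2)$. Now $\phi(\{|z|\le r\})$ is bounded by a Booth lemniscate, not a circle, and \lemref{lemma} does not apply verbatim. The real-axis endpoints are nevertheless explicit, $\phi(\pm r) = 1 \pm r/(1-\alpha r^2)$, and the closest real boundary point of $\varphi_{n\mathcal{L}}(\disc)$ to $1$ is $\varphi_{n\mathcal{L}}(-1) = 2/(n+1)$. Imposing $\phi(-r) \ge 2/(n+1)$ rearranges to $\alpha(n-1)r^2 + (n+1)r - (n-1) \le 0$; its positive root is the claimed radius, matching the stated formula once $(n+1)^2 + 4\alpha(n-1)^2$ is identified under the radical. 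Sharpness again comes from the extremal $f_0$ with $zf_0'(z)/f_0(z) = \phi(z)$, which attains $2/(n+1)$ at $z = -r^\ast$.

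The main obstacle lies in part (b): I must verify that the real-axis leftmost condition is truly binding, i.e.\ that the \emph{entire} curve $\phi(\partial\disc_r)$ stays in $\varphi_{n\mathcal{L}}(\disc)$ for $r \le r^\ast$, not just its real-axis endpoint. Since \lemref{lemma} is unavailable for non-circular images, this needs a direct parametric comparison of $\phi(\partial\disc_r)$ against the epicycloid $\partial\varphi_{n\mathcal{L}}(\disc)$; the useful facts are that the lemniscate has vertical extent only $\pm r/(1+\alpha r^2)$ while the cusps of the target sit out at argument $\pi/(n-1)$, and that the rightmost constraint $\phi(r) \le 2$ is strictly looser than the leftmost for $n \ge 4$. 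Part (a) is, by contrast, automatic once \lemref{lemma} is invoked and the discriminant is observed to factor as a square.
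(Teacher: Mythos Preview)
Your treatment of part~(a) is essentially identical to the paper's: the same disk with centre $c_r$ and radius $\rho_r$, the same appeal to \lemref{lemma} in the regime $a\le 1$, and the same extremal function (the paper writes it as $f_1(z)=z(1-z)^{2(\beta-1)}$).

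For part~(b) the paper avoids your ``main obstacle'' entirely by a single observation you overlooked: for $|z|\le r$ one has the crude but sharp disk bound
\[
\left|\frac{z}{1-\alpha z^2}\right|\le \frac{r}{1-\alpha r^2},
\]
since on $|z|=r$ the denominator satisfies $|1-\alpha z^2|\ge 1-\alpha r^2$. Thus $zf'(z)/f(z)$ lies in the \emph{disk} $|w-1|\le r/(1-\alpha r^2)$, and \lemref{lemma} (with centre $a=1$) applies directly, yielding exactly your inequality $r/(1-\alpha r^2)\le 1-2/(n+1)$ as a \emph{sufficient} condition. No parametric comparison of the Booth lemniscate against the epicycloid is needed. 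Sharpness is as you describe, via the extremal with $zf_0'/f_0=1+z/(1-\alpha z^2)$ evaluated at $z=-r^\ast$. So your proposal is correct in outline, but the anticipated difficulty in~(b) dissolves once you enclose the non-circular image in the obvious disk about~$1$.
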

\begin{proof}
	(a) Let $f\in\mathcal{M}(\beta).$ Then for $|z|=r,$
	\[\left|\frac{zf'(z)}{f(z)}-\frac{1+(1-2\beta)r^2}{1-r^2}\right|\leq \frac{2r(\beta-1)}{1-r^2}.\]
	We observe that the center of the above disk $(1+(1-2\beta)r^2)/(1-r^2)<1,$ for $\beta>1.$ By using \lemref{lemma} , we get
	\[\frac{2r(\beta-1)}{1-r^2}\leq \frac{1+(1-2\beta)r^2}{1-r^2}-\frac{2}{n+1}.\]
On simplification, this gives $r\leq (n-1)/((2\beta-1)n+2\beta-3)=\mathcal{R}_{\Snl}(\mathcal{M}(\beta)).$ The bound is sharp for the function $f_1(z)=z(1-z)^{2(\beta-1)}\in\mathcal{M}(\beta).$ For $z=\mathcal{R}_{\Snl}(\mathcal{M}(\beta)),$ the term $zf_1/f_1$ takes value $2/(n+1).$
\par (b) For $f\in\mathcal{B}\mathcal{S}(\alpha),$ we have $zf'(z)/f(z)\prec 1+z/(1-\alpha z^2)$,
 which gives
 \[\left|\frac{zf'(z)}{f(z)}-1\right|\leq \frac{r}{1-\alpha r^2},\]
 for $|z|<r.$ By using \lemref{lemma}, we get $r/(1-\alpha r^2)\leq 1-2/(n+1)$ and it simplifies to $r\leq \left(1+n+\sqrt{1+4\alpha+2n-8\alpha n+n^2+4\alpha n^2}\right)/(2\alpha(1-n))=\mathcal{R}_{\Snl}(\mathcal{B}\mathcal{S}(\alpha)),$ for $0<\alpha<1.$ For sharpness, consider the function $f_2$ given by
 \[f_2(z)=z\left(\frac{1+\sqrt{\alpha }z}{1-\sqrt{a}z}\right)^{1/(2\sqrt{\alpha})}.\]
 At $z=-\mathcal{R}_{\Snl}(\mathcal{B}\mathcal{S}(\alpha)),$ the quantity $zf_2'(z)/f_2(z)=2/(n+1).$
\end{proof}

\begin{theorem}
	The $\Snl-$radius for the various ratio classes such as $\mathcal{W},$ $\mathcal{F}_1$ and $\mathcal{F}_2$ is given by
	\begin{itemize}
		\item[(a)] $\mathcal{R}_{\Snl}(\mathcal{W})=\displaystyle\frac{\sqrt{2(1+n^2)}-n-1}{n-1}$
		\item[(b)] $\mathcal{R}_{\Snl}(\mathcal{F}_1)=\displaystyle\frac{2(1+n)-\sqrt{5n^2+6n+5}}{n-1}$
		\item[(c)] $\mathcal{R}_{\Snl}(\mathcal{F}_2)=\displaystyle\frac{3(n+1)-\sqrt{17n^2+10n+9}}{4n}.$
	\end{itemize}
\end{theorem}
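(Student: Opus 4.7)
The three parts share a common template. For $f$ in each class I will (i) derive a disk estimate of the form $|zf'(z)/f(z) - 1| \leq \Phi_{\mathcal{C}}(r)$ valid on $|z| = r$, (ii) invoke \lemref{lemma} with centre $a = 1$, and (iii) solve the resulting inequality for $r$. At $a = 1$ both relevant cases of \lemref{lemma} yield the same admissible radius $(n-1)/(n+1)$: the first-case formula gives $1 - 2/(n+1)$ directly, while a short computation using $(n\cos t + \cos nt)^2 + (n\sin t + \sin nt)^2 = n^2 + 2n\cos((n-1)t) + 1$ at $t = \pi/(n-1)$ confirms that $\sigma(\pi/(n-1))$ produces the same number. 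Thus $\{w : |w-1| < (n-1)/(n+1)\} \subset \varphi_{n\mathcal{L}}(\disc)$, and each $\Snl$-radius is the largest $r$ for which $\Phi_{\mathcal{C}}(r) \leq (n-1)/(n+1)$.

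The disk estimates come from \eqref{eqn4.1} with $\alpha = 0$, applied to suitable factorisations of $f$. For $\mathcal{W}$, writing $p = f/z \in \mathcal{P}$ gives $zf'/f - 1 = zp'/p$, whence $\Phi_{\mathcal{W}}(r) = 2r/(1-r^2)$. For $\mathcal{F}_1$, factor $f/z = (g/z)(f/g)$ with both factors in $\mathcal{P}$; the triangle inequality then doubles the bound to $\Phi_{\mathcal{F}_1}(r) = 4r/(1-r^2)$. For $\mathcal{F}_2$, set $h = f/g$, so $h(0) = 1$ and $|h-1| < 1$; hence $h - 1$ is a Schwarz function, and Schwarz--Pick combined with the identity $(1-|w|^2)/(1-|w|) = 1 + |w| \leq 1 + r$ yields the sharp estimate $|zh'/h| \leq r/(1-r)$, extremised by $h(z) = 1+z$ at $z = -r$. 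Combining with the $\mathcal{W}$-bound for $g$ gives $\Phi_{\mathcal{F}_2}(r) = 2r/(1-r^2) + r/(1-r) = r(3+r)/(1-r^2)$.

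Clearing denominators in $\Phi_{\mathcal{C}}(r) = (n-1)/(n+1)$ produces three quadratics, namely $(n-1)r^2 + 2(n+1)r - (n-1) = 0$, $(n-1)r^2 + 4(n+1)r - (n-1) = 0$, and $2nr^2 + 3(n+1)r - (n-1) = 0$, whose positive roots match the stated radii. Sharpness is then witnessed by the candidates $f_1(z) = z(1+z)/(1-z) \in \mathcal{W}$, $f_2(z) = z(1+z)^2/(1-z)^2 \in \mathcal{F}_1$ (with auxiliary $g_1(z) = z(1+z)/(1-z)$), and $f_3(z) = z(1+z)^2/(1-z) \in \mathcal{F}_2$ (with the same $g_1$): a direct calculation reduces $zf_j'/f_j - 1$ to $2z/(1-z^2)$, $4z/(1-z^2)$, and $z(3-z)/(1-z^2)$ respectively, so at $z = -\mathcal{R}_{\Snl}$ the quantity $zf'(z)/f(z)$ takes the boundary value $2/(n+1) \in \partial\varphi_{n\mathcal{L}}(\disc)$.

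The main obstacle is the $\mathcal{F}_2$-bound: a naive combination of $|h| \geq 1 - r$ with $|h'| \leq 1/(1-r^2)$ gives only $r/((1-r)(1-r^2))$, which would produce the wrong quadratic. The sharp $r/(1-r)$ bound requires cancelling the Schwarz--Pick numerator factor $1 + |w|$ against $|1+w| \geq 1 - |w|$ by rewriting $1 - |w|^2 = (1-|w|)(1+|w|)$, after which one must also check that the $g$- and $h$-contributions to $|zf'/f - 1|$ attain their maxima simultaneously at $z = -r$, so that the triangle inequality is tight and the candidate $f_3$ really is extremal.
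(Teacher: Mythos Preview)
Your proof is correct and follows the paper's approach throughout: the same factorisations of $f$, the same disk estimates $|zf'/f - 1| \leq \Phi(r)$ centred at $1$, the same appeal to \lemref{lemma} at $a=1$ with admissible radius $(n-1)/(n+1)$, the same resulting quadratics, and the same extremal functions $f_1,f_2,f_3$. The only cosmetic difference is in part~(c): the paper observes that $k_1 = g/f \in \mathcal{P}(1/2)$ and reads off $|zk_1'/k_1| \leq r/(1-r)$ directly from \eqref{eqn4.1} with $\alpha = 1/2$, whereas you derive the identical bound on $zh'/h = -zk_1'/k_1$ via Schwarz--Pick applied to the Schwarz function $h-1$.
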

\begin{proof}
	(a) Let $f\in \mathcal{W}.$ Then $f(z)/z\in\mathcal{P},$ for all $z\in\disc.$ Let us define function $p\in\mathcal{P}$ such that $p(z)=f(z)/z.$ Then
	\[\frac{zf'(z)}{f(z)}=1+\frac{zp'(z)}{p(z)}.\]
	 Thus, we have
	 \[\left|\frac{zf'(z)}{f(z)}-1\right|\leq \frac{2r}{1-r^2}.\]
	 By using \lemref{lemma}, the function $f\in\Snl$ for $|z|<r$ if $2r/(1-r^2)<1-2/(n+1).$ This simplifies to $r\leq (\sqrt{2(1+n^2)}-n-1)/(n-1).$ The result is sharp for function $f_1(z)=z(1+z)/(1-z)$ (See Figure \ref{fig2}(a)). For this function, we have
	 \[\left.\frac{zf_1'(z)}{f_1(z)}\right|_{z=-\frac{\sqrt{2(1+n^2)}-n-1}{n-1}}=\frac{2}{n+1}.\]
	 \par (b) For $f\in\mathcal{F}_1,$ let us define functions $k_1,k_2:\disc\rightarrow\mathbb{C}$ such that $k_1(z)=f(z)/g(z)$ and $k_2(z)=g(z)/z.$ Then $k_1,k_2\in\mathcal{P}$ and $f(z)=zk_1(z)k_2(z).$ A direct calculation shows that
	 \[\frac{zf'(z)}{f(z)}=1+\frac{zk_1'(z)}{k_1(z)}+\frac{zk_2'(z)}{k_2(z)}\]
	 and using (\ref{eqn4.1}), we get 
	 \[\left|\frac{zf'(z)}{f(z)}-1\right|\leq \frac{4r}{(1-r^2)}.\]
By using \lemref{lemma} to get the desired result, we have
  $4r/(1-r^2)\leq 1-2/(n+1)$ which yields $r\leq \left(2(1+n)-\sqrt{5n^2+6n+5}\right)/(n-1).$ For sharpness, consider the function $f_2(z)=z((1+z)/(1-z))^2$ and $g_2(z)=z(1+z)/(1-z).$ Further,
  \[\left.\frac{zf_2'(z)}{f_2(z)}\right|_{z=-\frac{2(1+n)-\sqrt{5n^2+6n+5}}{n-1}}=\frac{2}{n+1}.\]
  \par (c) Let $f\in\mathcal{F}_2.$ Then there is a function $g\in\mathcal{A}$ such that $|f(z)/g(z)-1|<1$ and $g(z)/z\in\mathcal{P}.$ We define functions $k_1,k_2:\disc\rightarrow\mathbb{C}$ as $k_1(z)=g(z)/f(z)$ and $k_2(z)=g(z)/z.$ By definition of class $\mathcal{F}_2$, $k_1\in\mathcal{P}(1/2),\,k_2\in\mathcal{P}$ and $f(z)=zk_2(z)/k_1(z).$ A simple computation shows that
  \[\frac{zf'(z)}{f(z)}=1+\frac{zk_2'(z)}{k_2(z)}-\frac{zk_1'(z)}{k_1(z)}.\]
  By using (\ref{eqn4.1}), we get
  \[\left|\frac{zf'(z)}{f(z)}-1\right|\leq \frac{3r+r^2}{1-r^2}.\]
  Thus, image domain of the function $zf'(z)/f(z)$ lies in $\varphi_{n\mathcal{L}}(\disc)$ if $(3r+r^2)/(1-r^2)\leq 1-2/(n+1),$ by \lemref{lemma}. This holds for $r\leq\left(
  (n+1)-\sqrt{17n^2+10n+9}\right)/(4n).$ The bound is sharp for the function $f_3(z)=z(1+z)^2/(1-z)$ and function $g_3(z)=z(1+z)/(1-z)$. For $z=-\left(
  (n+1)-\sqrt{17n^2+10n+9}\right)/(4n),$ the quantity $zf_3'(z)/f_3(z)=2/(n+1).$
  \par The Sharpness for all the parts are illustrated in Figure \ref{fig2}.
\end{proof}
\begin{figure}[h]
	\begin{center}
		\subfigure[ $\mathcal{W}$]{\includegraphics[width=1.5in]{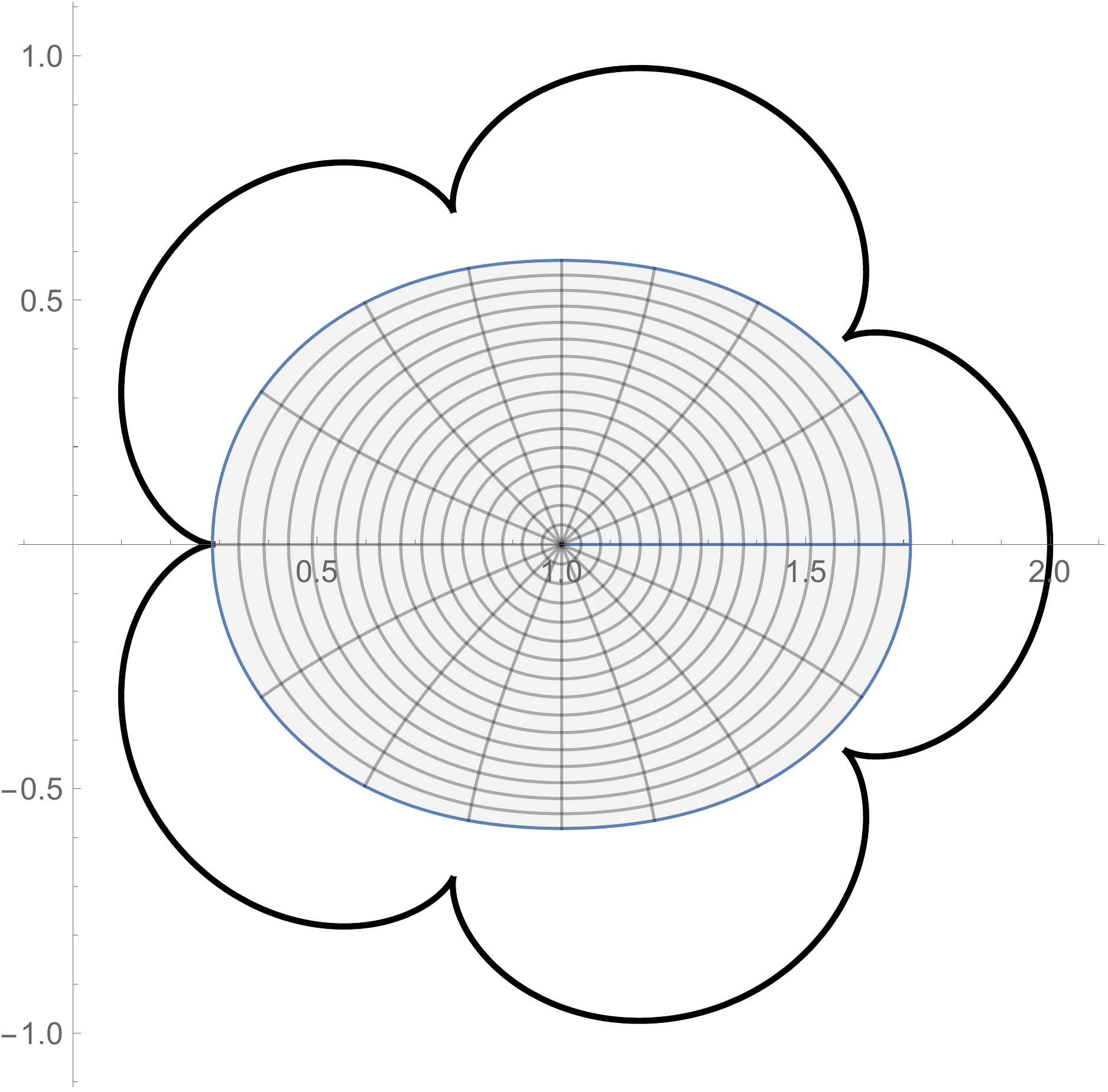}}\hspace{10pt}
		\subfigure[$\mathcal{F}_1$]{\includegraphics[width=1.5in]{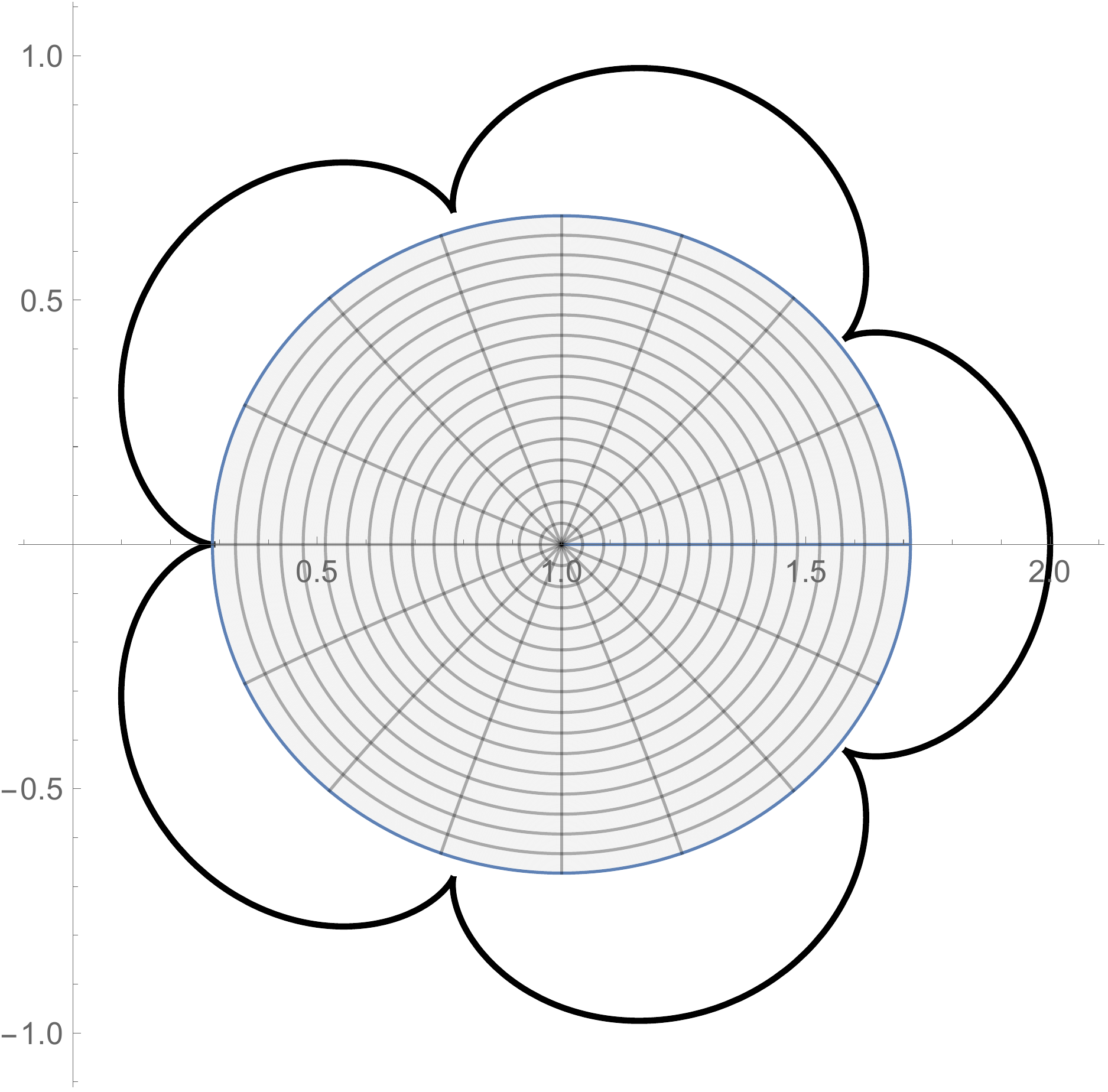}}\hspace{10pt}
		\subfigure[$\mathcal{F}_2$]{\includegraphics[width=1.5in]{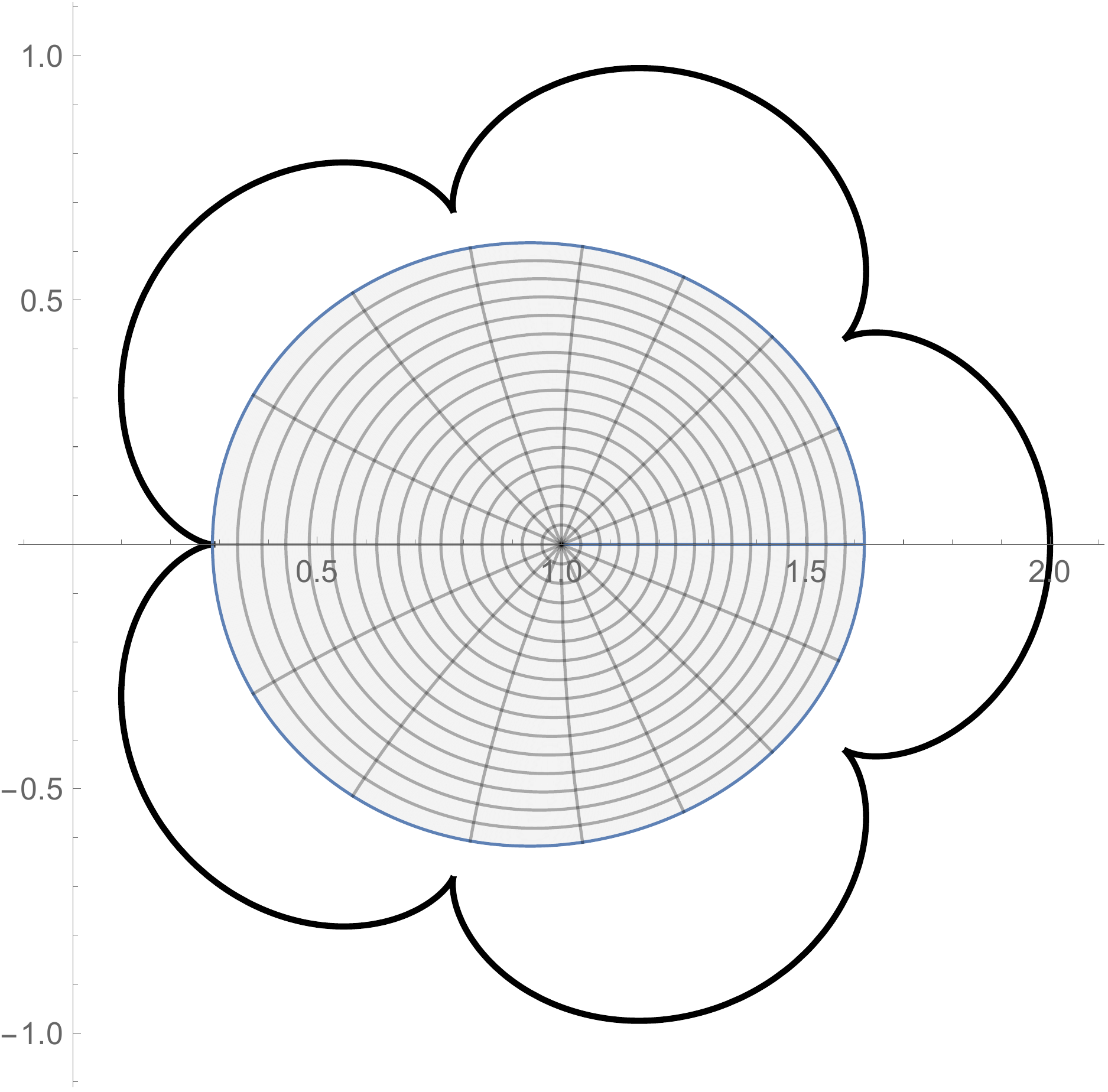}}\hspace{10pt}
			\caption{$\Snl-$radius for class $\mathcal{W}$, $\mathcal{F}_1$ and $\mathcal{F}_2$ }\label{fig2}
	\end{center}
\end{figure}
\begin{theorem}
	For function in class $\mathcal{S}\mathcal{L}^*(\alpha),\,\mathcal{S}^*_{\alpha,e}$ and $\mathcal{S}^*_{EL}$, the following holds:
	\begin{itemize}
		\item[(a)]$\mathcal{R}_{\Snl}\left(\mathcal{S}\mathcal{L}^*(\alpha)\right)=\displaystyle\frac{(n-1)\left(2\alpha(n+1)-n-3\right)}{(n+1)^2(\alpha-1)^2}.$
		\\
		\item[(b)]$\mathcal{R}_{\Snl}\left(\mathcal{S}^*(\sqrt{1+cz})\right)=\displaystyle\frac{n^2+2n-3}{c(n+1)^2},$ for $1-\displaystyle\frac{4}{(n+1)^2}<c\leq1.$
		\\
		\item[(c)]$\mathcal{R}_{\Snl}\left(\mathcal{S}^*_{\alpha,e}\right)=\left|\log\left(\displaystyle\frac{1+\gamma^n+n+\gamma n+\alpha+n\alpha}{(n+1)(1-\alpha)}\right)\right|.$
		\\
		\item[(d)] $\mathcal{R}_{\Snl}(\mathcal{S}^*_{EL})= \gamma^n +n\gamma+(n+1)\alpha-\phi(\delta)(1+n)(1-\alpha),$ where $\phi(z)$ gives the principal solution for $w$ in $z=we^w$ and $\delta=\alpha \exp((\gamma^n +n\gamma+\alpha(1+n))/(n+1)(1-\alpha))/(1-\alpha),$
	\end{itemize}
for $\gamma =e^{i\pi/(n-1)}$ and $0\leq \alpha<1.$ All bounds are sharp.
\end{theorem}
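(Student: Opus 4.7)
All four parts share a uniform strategy: if $f$ lies in the source class, Ma--Minda subordination pins $zf'(z)/f(z)$ to a specific region on $|z|=r$, and the $\Snl$-radius is the largest $r$ for which this region is contained in $\varphi_{n\mathcal{L}}(\disc)$. \lemref{lemma} then identifies the critical contact point on $\partial\varphi_{n\mathcal{L}}(\disc)$ in terms of where the center of the image sits among the thresholds $2/(n+1)$, $1$, $a_3$, $2$.

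For parts (a) and (b), the subordinators $\alpha+(1-\alpha)\sqrt{1+z}$ and $\sqrt{1+cz}$ map $|z|\leq r$ into Cassinian-type regions hugging the real axis, so the first contact with $\partial\varphi_{n\mathcal{L}}(\disc)$ occurs at the leftmost real point $\varphi_{n\mathcal{L}}(-1)=2/(n+1)$ (valid since $n$ is even). In (a), the leftmost point of the image is $\alpha+(1-\alpha)\sqrt{1-r}$, attained at $z=-r$; setting this equal to $2/(n+1)$ and solving for $r$ yields the stated formula. Part (b) is identical with the condition $\sqrt{1-cr}=2/(n+1)$; the assumption $c>1-4/(n+1)^2$ is exactly what forces the resulting $r$ below $1$, complementing \thmref{inclusion}(d). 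Sharpness in each case is witnessed by the Ma--Minda extremal $f_\varphi(z)=z\exp\int_0^z(\varphi(t)-1)/t\,dt$ evaluated at $z=-\mathcal{R}_{\Snl}$, which sends this point precisely to $2/(n+1)$.

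For parts (c) and (d), the exponential-type subordinators push the image off the real axis far enough that the first contact with $\partial\varphi_{n\mathcal{L}}(\disc)$ occurs at the cusp $\varphi_{n\mathcal{L}}(\gamma)$ with $\gamma=e^{i\pi/(n-1)}$. For (c), solving $\alpha+(1-\alpha)e^{z_0}=\varphi_{n\mathcal{L}}(\gamma)$ gives $z_0=\log((\varphi_{n\mathcal{L}}(\gamma)-\alpha)/(1-\alpha))$ and $r=|z_0|$; applying the identity $(n+1)\varphi_{n\mathcal{L}}(\gamma)=(n+1)+n\gamma+\gamma^n$ recasts the argument of the logarithm into the displayed closed form. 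For (d), the same procedure applied to the $\mathcal{S}^*_{EL}$ subordinator produces an implicit equation of the form $we^w=\delta$, whose principal solution $\phi(\delta)$ gives $z_0$ after a linear rearrangement, with $r$ extracted accordingly. Sharpness is obtained by evaluating the corresponding Ma--Minda extremal on the radial ray $z=\gamma r$, whose image is exactly the cusp.

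The main obstacle is certifying that the chosen boundary point is actually the first point of contact. Each case requires verifying that the center of the image region lies in the specific parameter interval of \lemref{lemma} paired with the chosen $r_a$ expression ($a-2/(n+1)$ on the real axis for (a), (b); $\sigma(\pi/(n-1))$ at the cusp for (c), (d)). These are elementary but delicate parameter comparisons, particularly near the transition values $1$ and $a_3$, where subtle sign errors can creep in if one is not careful about squaring inequalities or selecting branches of $\sqrt{\cdot}$ and $\log$.
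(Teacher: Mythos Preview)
Your proposal is correct and follows essentially the same route as the paper: parts (a) and (b) are handled by forcing the leftmost real point of the image to equal $\varphi_{n\mathcal{L}}(-1)=2/(n+1)$ (the paper phrases this as the disk estimate $|zf'/f-1|\leq(1-\alpha)(1-\sqrt{1-r})$ together with \lemref{lemma} at $a=1$, which is the same computation), while parts (c) and (d) are handled by solving $\varphi(r)=\varphi_{n\mathcal{L}}(\gamma)$ at the cusp $\gamma=e^{i\pi/(n-1)}$ and taking the modulus of the solution. Your sharpness argument via the Ma--Minda extremal of the \emph{source} class is in fact cleaner than the paper's, which (for part (a)) somewhat confusingly cites $f_{n\mathcal{L}}$ instead.
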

\begin{proof}
		(a) Let $f\in\mathcal{S}\mathcal{L}^*(\alpha).$ Then $zf'(z)/f(z)\prec \alpha+(1-\alpha)\sqrt{1+z}.$ The image of disk $|z|<r$ under the function $zf'(z)/f(z)$ lies inside the domain $\varphi_{n\mathcal{L}}(\disc)$ if
	\begin{align*}
		\left|\frac{zf'(z)}{f(z)}-1\right|&\leq \left|\alpha+(1-\alpha)\sqrt{1+z}-1\right|\\
		&\leq (1-\alpha)\left(1-\sqrt{1-r}\right)\\
		&\leq 1-\frac{2}{n+1}.
	\end{align*}
	This holds for $r\leq (n-1)\left(2\alpha(n+1)-n-3\right)/((n+1)^2.(\alpha-1)^2)$. The result is sharp for the function $f_{n\mathcal{L}}(z)$ given by (\ref{eqn4}). Further,
	\[\frac{zf_{n\mathcal{L}}'(z)}{f_{n\mathcal{L}}(z)}=\frac{2}{n+1}=\varphi_{n\mathcal{L}}(-1),\]
	for $z=(n-1)\left(2\alpha(n+1)-n-3\right)/((n+1)^2.(\alpha-1)^2)$. For $\alpha=0,$ the sharpness is shown in Figure \ref{fig3}(a).
	\par (b) For $0<c\leq 1-4/(n+1)^2,$ the $\Snl-$radius for the class $\mathcal{S}^*(\sqrt{1+cz})$ is 1 by Theorem \ref{inclusion}(d). Let us now assume that $1-4/(n+1)^2<c\leq1.$ Since $f\in\mathcal{S}^*(\sqrt{1+cz}),$ we have
	\[\left|\frac{zf'(z)}{f(z)}-1\right|\leq 1-\sqrt{1-cr}.\]
	By using \lemref{lemma}, we get $1-\sqrt{1-cr}\leq 2/(n+1)$ and this simplifies to $r\leq(n^2+2n-3)/(c(n+1)^2).$
	\par (c) For $f\in\mathcal{S}^*_{\alpha,e}$, we compute the radius by considering the geometries of the domains. The image of disk $|z|<r$ under the function $zf'(z)/f(z)$ lies inside the domain $\varphi_{n\mathcal{L}}(\disc)$ if $r\leq r_1,$ where $r_1$ is the absolute value of the solution of the equation $\alpha+(1-\alpha)e^r=\varphi_{n\mathcal{L}}(e^{i\pi/(n-1)}).$ A direct computation gives
	\[r_1=\left|\log\left(\displaystyle\frac{1+\gamma^n+n+\gamma n+\alpha+n\alpha}{(n+1)(1-\alpha)}\right)\right|.\]
	Clearly, the result is sharp and can be seen from Figure \ref{fig3}(b) for the particular case $\alpha=0$.
	\par (d) Similarly, for this class, the $\Snl-$radius is obtained by solving the equation $\alpha e^r+(1-\alpha)(1+r)=\varphi_{n\mathcal{L}}(e^{i\pi/(n-1)}) $ for $r.$ This gives that the desired result holds for $r\leq\mathcal{R}_{\Snl}(\mathcal{S}^*_{EL})$. For $a\alpha=1/2,$ the sharpness is shown in Figure \ref{fig3}(c).
\end{proof}
\begin{figure}[h]
	\begin{center}
		\subfigure[$\mathcal{S}\mathcal{L}^*(0)$]{\includegraphics[width=1.5in]{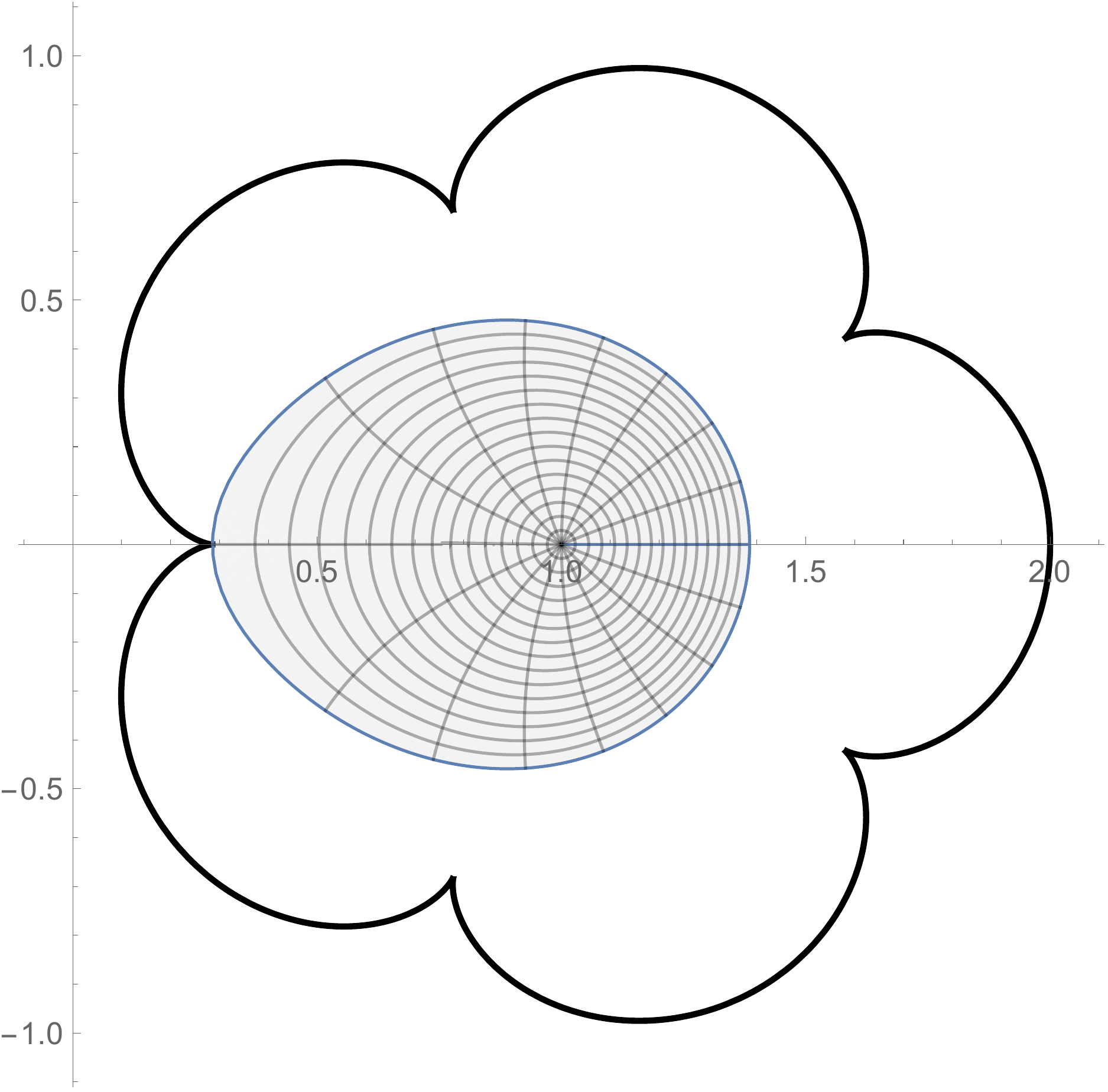}}\hspace{10pt}
		\subfigure[$\mathcal{S}^*_{0,e}$]{\includegraphics[width=1.5in]{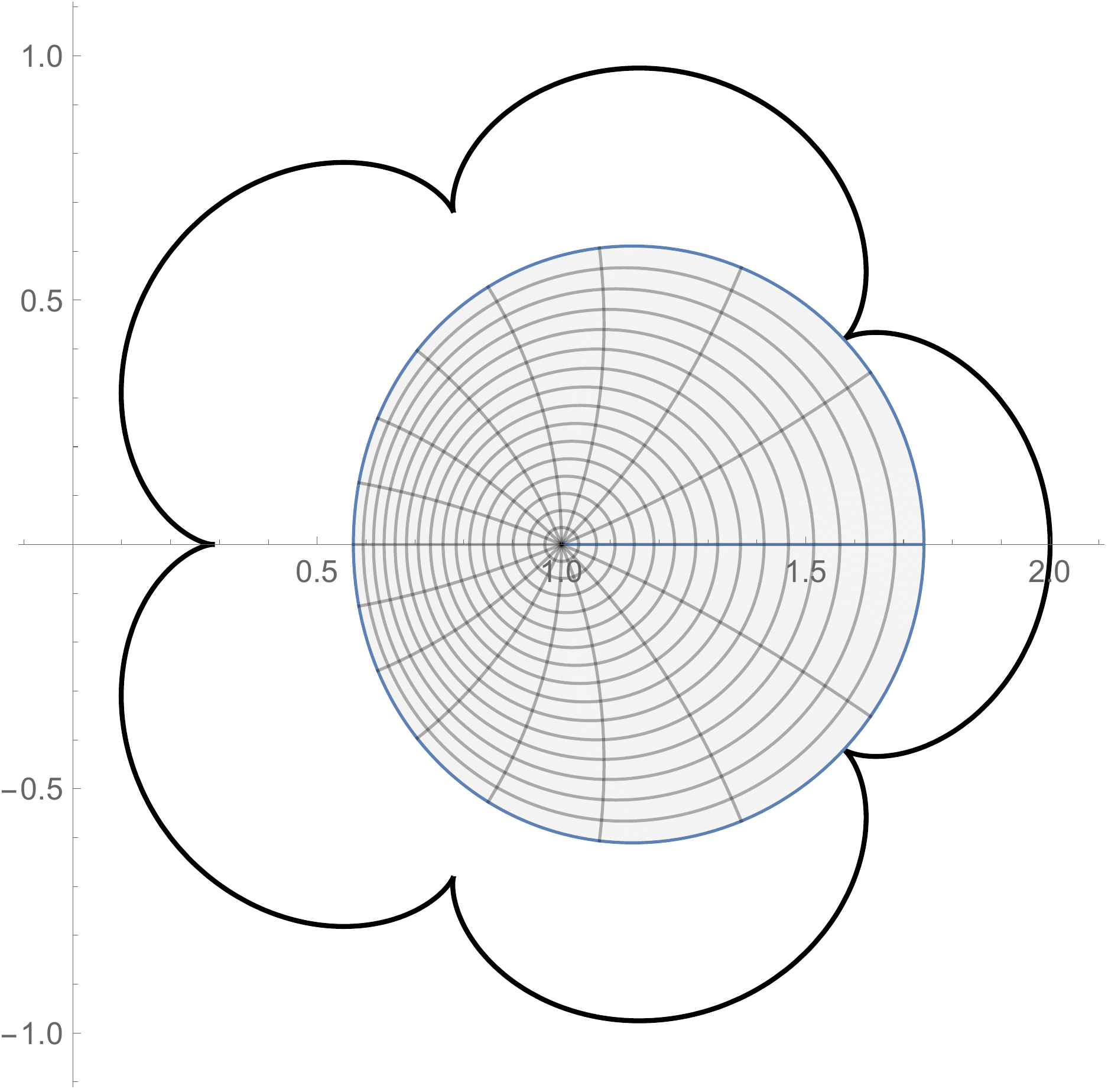}}\hspace{10pt}
		\subfigure[$\mathcal{S}^*_{EL}(\alpha=1/2)$]{\includegraphics[width=1.5in]{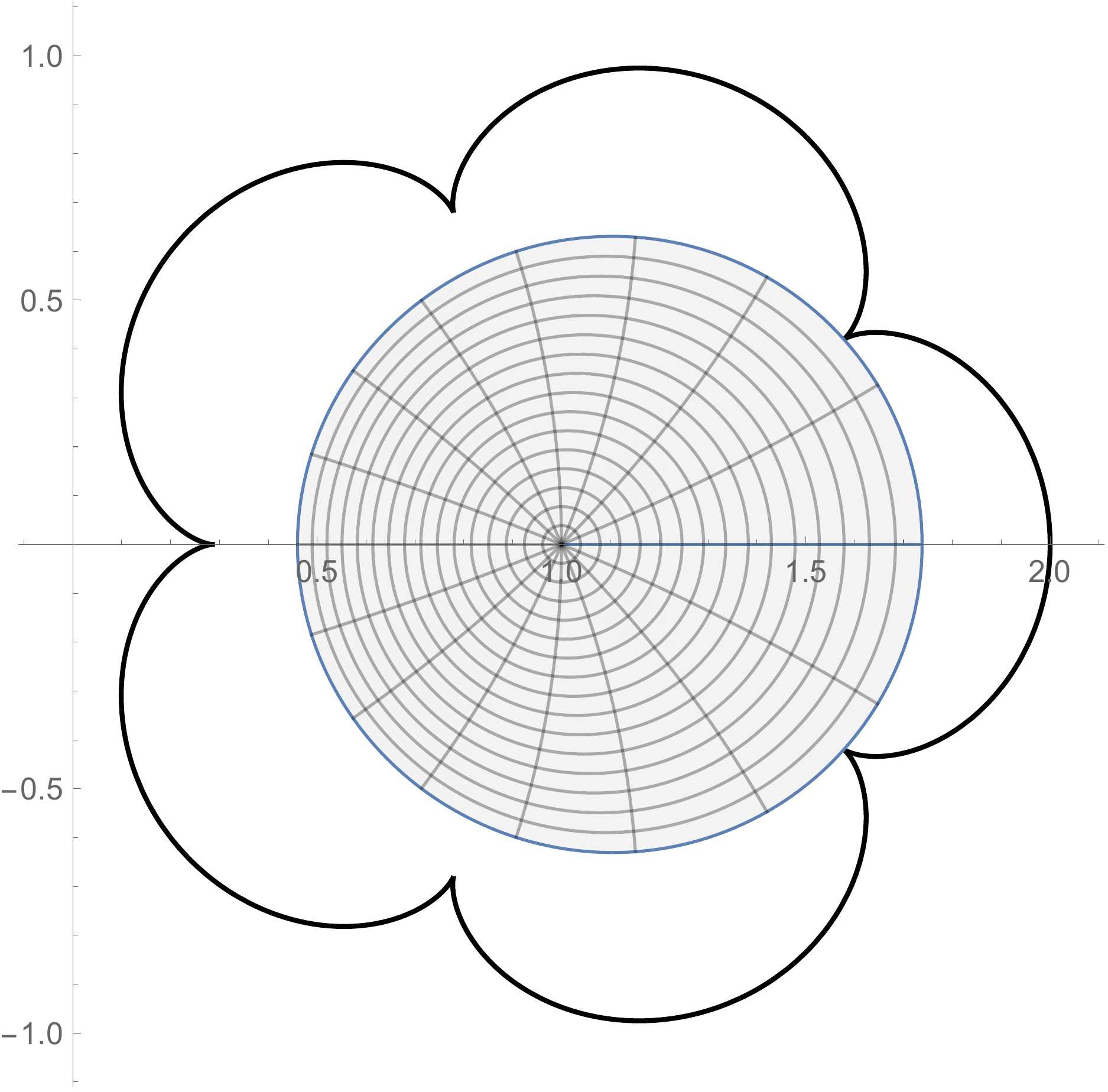}}\hspace{10pt}
		\caption{$\Snl-$radius of classes $\mathcal{S}\mathcal{L}^*(0)$, $\mathcal{S}^*_{0,e}$ and $\mathcal{S}^*_{EL}(\alpha=1/2)$}\label{fig3}
	\end{center}
\end{figure}

\begin{theorem}
	The sharp $\Snl-$radius for various Ma-Minda type subclasses of starlike functions is given by
	\begin{itemize}
		\item[(a)]$\mathcal{R}_{\Snl}\left(\mathcal{S}^*_C\right)=\left|\displaystyle\frac{\sqrt{16(n+1)^2+8(n+1)(3\gamma^n+3n\gamma)}}{4(n+1)}-1\right|$
		\item[(b)]$\mathcal{R}_{\Snl}(\mathcal{S}^*_{\leftmoon})=\displaystyle\frac{1}{2}\left|\frac{2\gamma^{n-1}(n+1)+\gamma^{2n-1}+2\gamma^n+}{(1+n)(1+\gamma^n+n+n\gamma)}\right|$
		\item[(c)] $\mathcal{R}_{\Snl}(\mathcal{S}^*_R)=|R_1|/(2(n+1))$, where
		\begin{multline}\label{eqn5}
			R_1=(1+\sqrt{2})(\gamma^n+1+n+\gamma)+\\\sqrt{(n+1)\left(3+2\sqrt{2}\right)\left(\gamma^n+n\gamma\right)+\left(1+\sqrt{2}\right)^2(1+n+n\gamma+\gamma^n)^2}.
			\end{multline}
		\item[(d)] $\mathcal{R}_{\Snl}(\mathcal{S}^*_{RL})=\displaystyle\frac{(n-1)(1-\sqrt{2}+3n+\sqrt{2}n)}{11-7\sqrt{2}+6n-6\sqrt{2}n+3n^2+\sqrt{2}n^2}$
	
		\item[(e)] $\mathcal{R}_{\Snl}(\mathcal{S}^*_{lim})=\left|\sqrt{\displaystyle\frac{2(1+\gamma^n+n(1+\gamma))}{n+1}}-\sqrt{2}\right|$
		\item[(f)] $\mathcal{R}_{\Snl}(\mathcal{S}^*(1+ze^z))=\left|\phi\left(\displaystyle\frac{\gamma^n +n\gamma}{1+n}\right)\right|,$ where $\phi(z)$ is given as in Theorem 5.5(c),\\
		\item[(g)] $\mathcal{R}_{\Snl}(\mathcal{S}^*_{car})=\left|-1+\sqrt{\displaystyle\frac{1+2\gamma^n+n+2\gamma n}{n+1}}\right|.$
	\end{itemize}
	where $\gamma=e^{i\pi/(n-1)}.$
\end{theorem}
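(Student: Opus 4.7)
The strategy follows the pattern of the preceding theorems in this section: for $f\in\mathcal{S}^*(\psi)$, the subordination $zf'(z)/f(z)\prec\psi(z)$ gives $zf'(z)/f(z)\in\psi(\{|z|<r\})$ whenever $|z|<r$, so the $\Snl$-radius is the largest $r$ for which $\psi(\{|z|<r\})\subseteq\varphi_{n\mathcal{L}}(\disc)$. Using $n$ even, $\gamma:=e^{i\pi/(n-1)}$ satisfies $\gamma^n=-\gamma$, so $\varphi_{n\mathcal{L}}(\gamma)=1+(n-1)\gamma/(n+1)$; together with the real cusp $\varphi_{n\mathcal{L}}(-1)=2/(n+1)$, these are the boundary points of $\varphi_{n\mathcal{L}}(\disc)$ nearest to the common centre $1$, each at distance $(n-1)/(n+1)$, in agreement with the disk radius given by \lemref{lemma} at $a=1$. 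The extremal $r$ is thus reached when $\partial\psi(\{|z|<r\})$ first touches one of these cusps.

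For parts (a), (b), (c), (e), (f), (g), I expect the off-axis cusp $\varphi_{n\mathcal{L}}(\gamma)$ to be the first contact, so the plan is to solve the single complex equation $\psi(z)=\varphi_{n\mathcal{L}}(\gamma)$ and read off $|z|$ as the radius. In (a), (c), (e) and (g) this equation is a quadratic in $z$, and the quadratic formula delivers the stated radical expression; for (b), $\psi(z)=z+\sqrt{1+z^2}$ reduces, after isolating the radical and squaring, to a linear equation in $z$; for (f), $\psi(z)=1+ze^z$ reduces to $ze^z=(n-1)\gamma/(n+1)$, whose principal solution is by definition $\phi((\gamma^n+n\gamma)/(n+1))$. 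Part (d) is special: the absence of $\gamma$ from the formula signals that the real-axis cusp $2/(n+1)$ is the first obstruction, since the reverse-lemniscate $\psi_{RL}(\disc)$ is narrow and elongated along the real axis. I would accordingly set $\psi_{RL}(-r)=2/(n+1)$, isolate the inner radical, square, and solve the resulting linear equation in $r$, rationalizing by the factor $\sqrt{2}+1$ to reach the stated rational expression in $n$ and $\sqrt{2}$.

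Sharpness in each case is witnessed by the Ma--Minda extremal $f_\psi(z)=z\exp\int_0^z(\psi(t)-1)/t\,dt$ of $\mathcal{S}^*(\psi)$: at the computed critical point $z_0$ the quotient $z_0f_\psi'(z_0)/f_\psi(z_0)=\psi(z_0)$ lands exactly on the designated cusp of $\partial\varphi_{n\mathcal{L}}(\disc)$, so no larger $r$ is admissible. The main non-routine step is the geometric ``first-touch'' claim that lets us use only one equation per case: one must verify that, as $r$ grows to the computed value, the curve $\psi(|z|=r)$ does not exit $\varphi_{n\mathcal{L}}(\disc)$ through any portion of the epicycloidal boundary other than the designated cusp. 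Given the symmetry of both $\psi(\disc)$ and $\varphi_{n\mathcal{L}}(\disc)$ about the real axis and the starlikeness of $\psi(\disc)$ with respect to $1$, this reduces to comparing, case by case, the polar envelope of $\psi(|z|=r)$ with the parametric curve~\eqref{eqn0}; it is the only part of the argument that is not pure algebra.
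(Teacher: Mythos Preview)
Your plan coincides with the paper's proof. In each of (a), (b), (c), (e), (f), (g) the paper writes down the single equation $\psi(r)=\varphi_{n\mathcal{L}}(e^{i\pi/(n-1)})$ (with $r$ treated as a complex unknown), solves it, and takes the modulus of the root as the radius, exactly as you propose; for (d) it instead equates $\varphi_{RL}$ at a real boundary point to $2/(n+1)=\varphi_{n\mathcal{L}}(-1)$, again matching your choice of the real-axis cusp. Sharpness is argued, as you do, through the Ma--Minda extremal (illustrated in the paper by figures rather than by an explicit formula). Your observation that $\gamma^n=-\gamma$ and hence $|\varphi_{n\mathcal{L}}(\gamma)-1|=(n-1)/(n+1)=|\varphi_{n\mathcal{L}}(-1)-1|$ is a helpful clarification that the paper does not make explicit, and your remark that the ``first-touch'' containment step is the only non-algebraic ingredient is accurate: the paper handles that step by appeal to the geometry and the accompanying pictures rather than by a detailed comparison of the curves.
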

\begin{proof}
(a) Let $f\in\mathcal{S}^*_C$. Then $zf'(z)/f(z)\prec 1+4z/3+2z^2/3.$ By geometric interpretation, the cardiod $(9u^2+9v^2-18u+5)^2-16(9u^2+9v^2-6u+1)=0$ lies in the domain $\varphi_{n\mathcal{L}}(\disc)$ for $r\leq r_3,$  where $r_3$ is the absolute solution of the equation
\[\frac{4r}{3}+\frac{2r^2}{3}=\frac{n e^{i\pi/(n-1)}}{n+1}+\frac{e^{in\pi/(n-1)}}{n+1},\]
given by
\[r_3=\left|-1+\frac{\sqrt{4(n+1)^2+6(n+1)(\gamma^n+n\gamma)}}{n+1}\right|,\]
for $\gamma=e^{i\pi/(n-1)}.$ Sharpness can be seen from Figure \ref{fig4}(a).
\par (b) Proceeding in a similar way, the necessary condition for the lune $|w^2-1|<2|w|,\,w\in\mathbb{C},$ to lie inside the domain $\varphi_{n\mathcal{L}}(\disc)$ is obtained by solving $r+\sqrt{1+r^2}=\varphi_{n\mathcal{L}}(e^{i\pi/(n-1)})$ for $r$. A direct simplification yields the $\Snl-$radius for this class is $r_4$ which is exactly $\mathcal{R}_{\Snl}(\mathcal{S}^*_{\leftmoon})$ (See Figure \ref{fig4}(b)).
\par (c) Similarly, for this class, the $\Snl-$ radius is computed by solving equation
\[\frac{(\sqrt{2}+1)^2+r^2}{(\sqrt{2}+1)(\sqrt{2}+1-r)}= 1+\frac{ne^{i\pi/(n-1)}}{n+1}+\frac{e^{in\pi/(n-1)}}{n+1},\]
for $r$. This gives $r\leq |R_1|/(2(n+1)),$ where $R_1$ is given by (\ref{eqn5}). The sharpness for this class is depicted in Figure \ref{fig4}(c).
\begin{figure}[h]
	\begin{center}
		\subfigure[$\mathcal{S}^*_{car}$]{\includegraphics[width=1.5in]{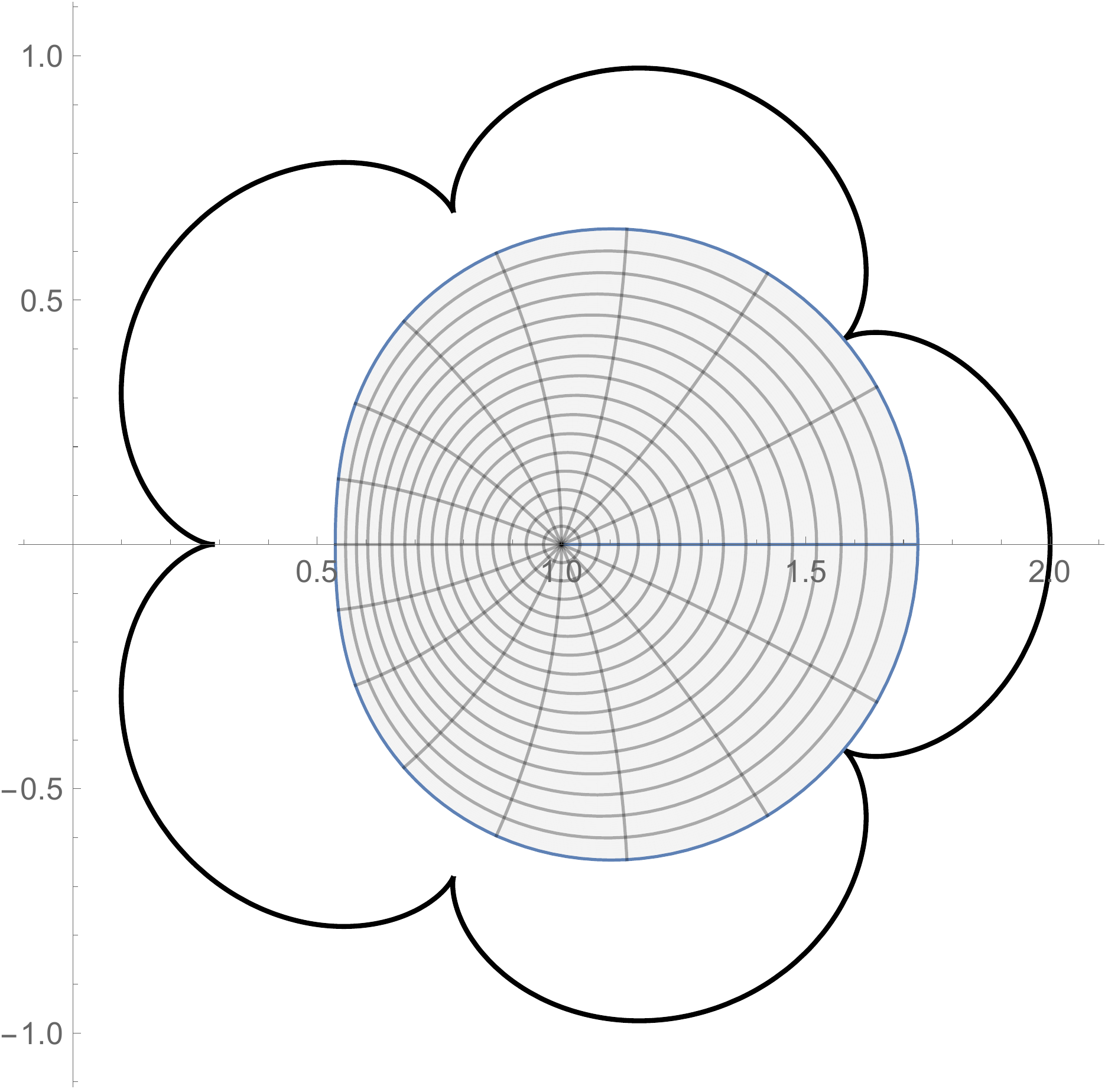}}\hspace{10pt}
		\subfigure[$\mathcal{S}^*_{\leftmoon}$]{\includegraphics[width=1.5in]{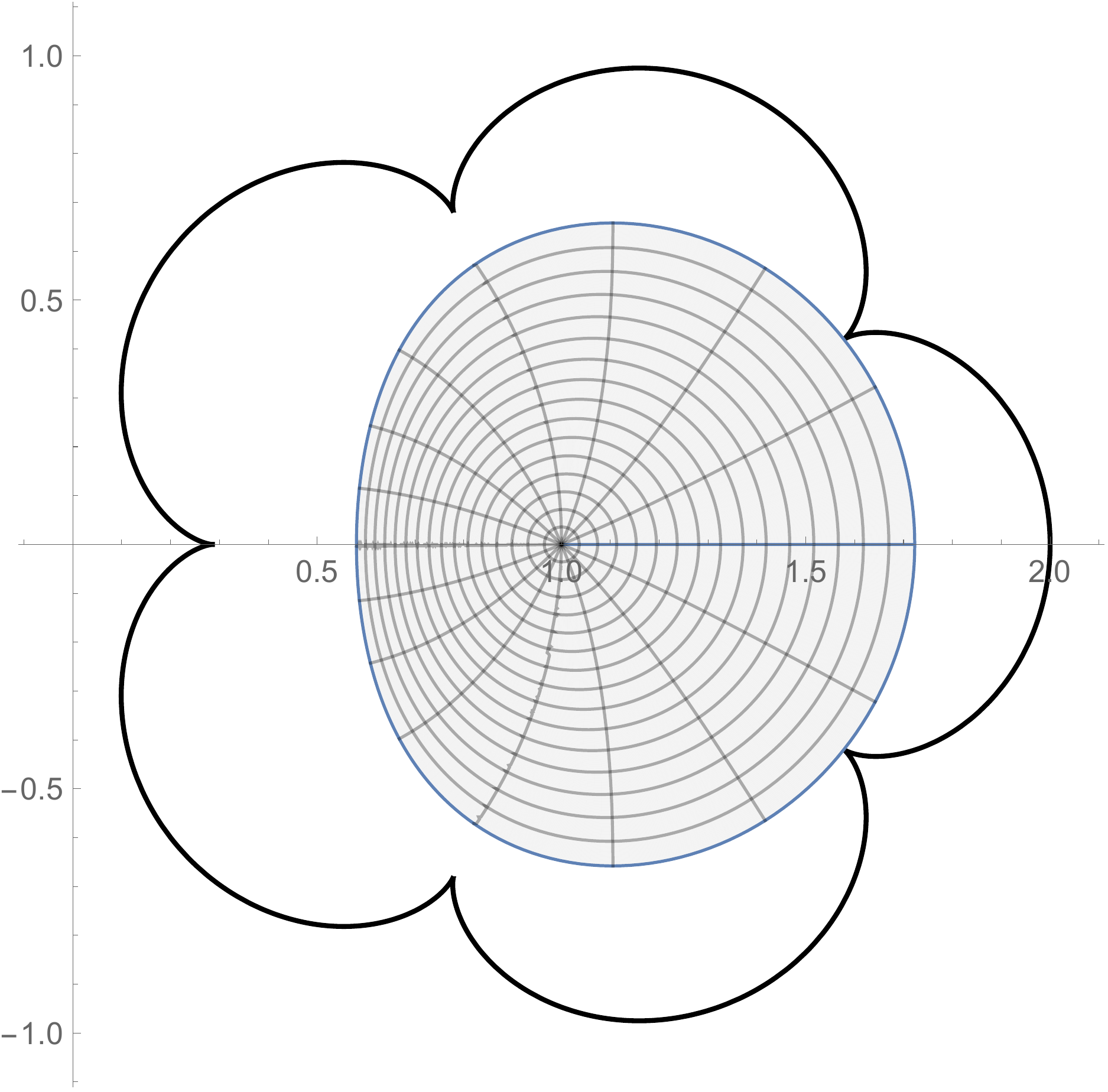}}\hspace{10pt}
		\subfigure[$\mathcal{S}^*_R$]{\includegraphics[width=1.5in]{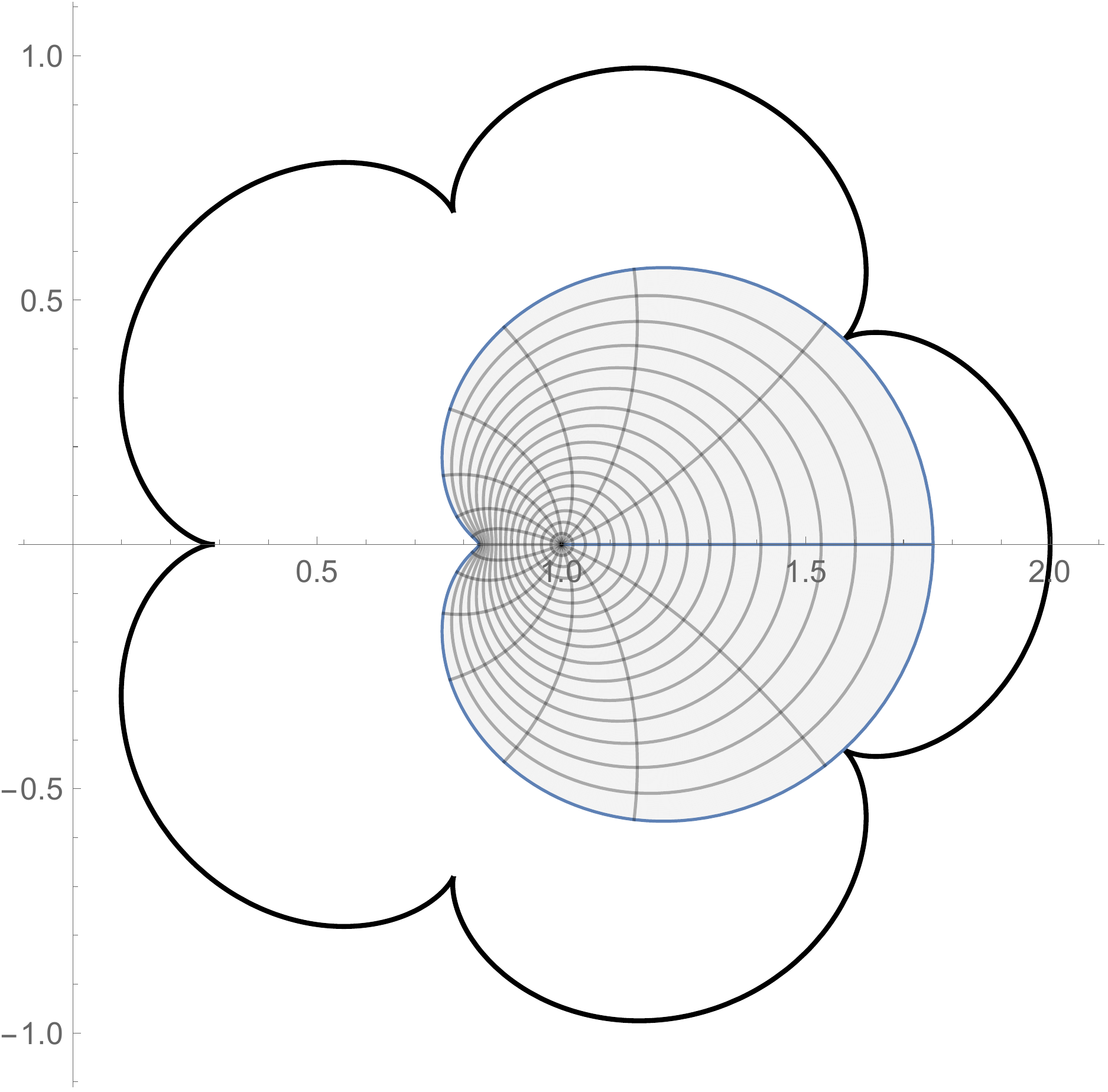}}\hspace{10pt}
			\caption{$\Snl-$radius for $\mathcal{S}^*_{car}$, $\mathcal{S}^*_{\leftmoon}$ and $\mathcal{S}^*_R$}\label{fig4}
	\end{center}
\end{figure}
\par (d) Let $f\in\mathcal{S}^*_{RL}.$ Then the image of the disk $|z|<r$ under the function $zf'(z)/f(z)$ lies in the domain $\varphi_{n\mathcal{L}}(\disc)$ for $r\leq r_4,$ where $r_4$ is the solution of the equation
\[\sqrt{2}-(\sqrt{2}-1)\sqrt{\frac{1-r}{1+2(\sqrt{2}-1)r}}=\frac{2}{n+1},\]
by geometries of the domains. The result is sharp for the function $f_4$ defined such that
\[\frac{zf_4'(z)}{f_4(z)}=\sqrt{2}-(\sqrt{2}-1)\sqrt{\frac{1-z}{1+2(\sqrt{2}-1)z}}.\]
It is clear that
\[\left.\frac{zf_4'(z)}{f_4(z)}\right|_{z=r_4}=\frac{2}{n+1}=\varphi_{n\mathcal{L}}(-1),\]
as illustrated in Figure \ref{fig5}(a).
\par (e) To compute this radius, we solve the following equation for r
\[\sqrt{2}r+\frac{r^2}{2}=\frac{n e^{i\pi/(n-1)}}{n+1}+\frac{e^{in\pi/(n-1)}}{n+1}.\]
Thus, $\Snl-$radius for the class $\mathcal{S}^*_{lim}$ is given by $\mathcal{R}_{\Snl}(\mathcal{S}^*_{lim})$ and sharpness is shown in Figure \ref{fig5}(b).
\par (f) The $\Snl-$radius for the class $\mathcal{S}^*(1+ze^z)$ by solving the equation $1+re^r=\varphi_{n\mathcal{L}}(e^{i\pi/(n-1)})$ for $r$. This gives the desired result holds for $r\leq \phi((\gamma^n+n\gamma)/(n+1))$ where the function $\phi$ is defined in Theorem 5.5(c). (See Figure \ref{fig5}(c)).
\par (g) Lastly, to compute the $\Snl-$radius for this class we will consider the cusp at the angle $\pi/(n-1)$ and obtain the equation
\[(n+1)(2r+r^2)=2(ne^{i\pi/(n-1)}+e^{in\pi/(n-1)}).\]
On solving above equation, we get the desired result holds for $r\leq \mathcal{R}_{\Snl}(\mathcal{S}^*_{car}),$ given in the statement of the theorem. Sharpness is depicted in Figure \ref{fig5}(d).
\end{proof}
\begin{figure}[h]
	\begin{center}
		\subfigure[$\mathcal{S}^*_{RL}$]{\includegraphics[width=1.2in]{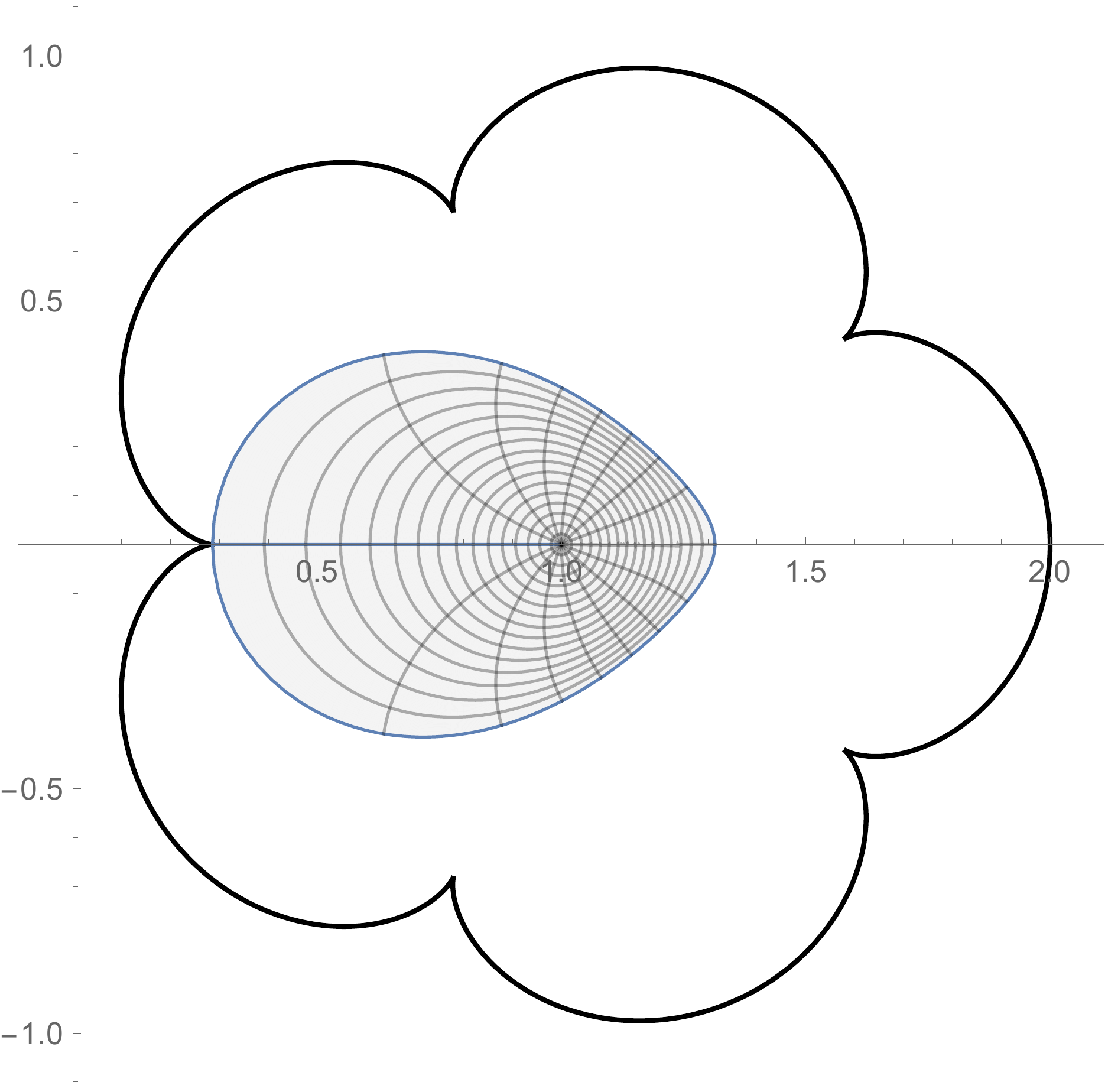}}\hspace{10pt}
		\subfigure[$\mathcal{S}^*_{lim}$]{\includegraphics[width=1.2in]{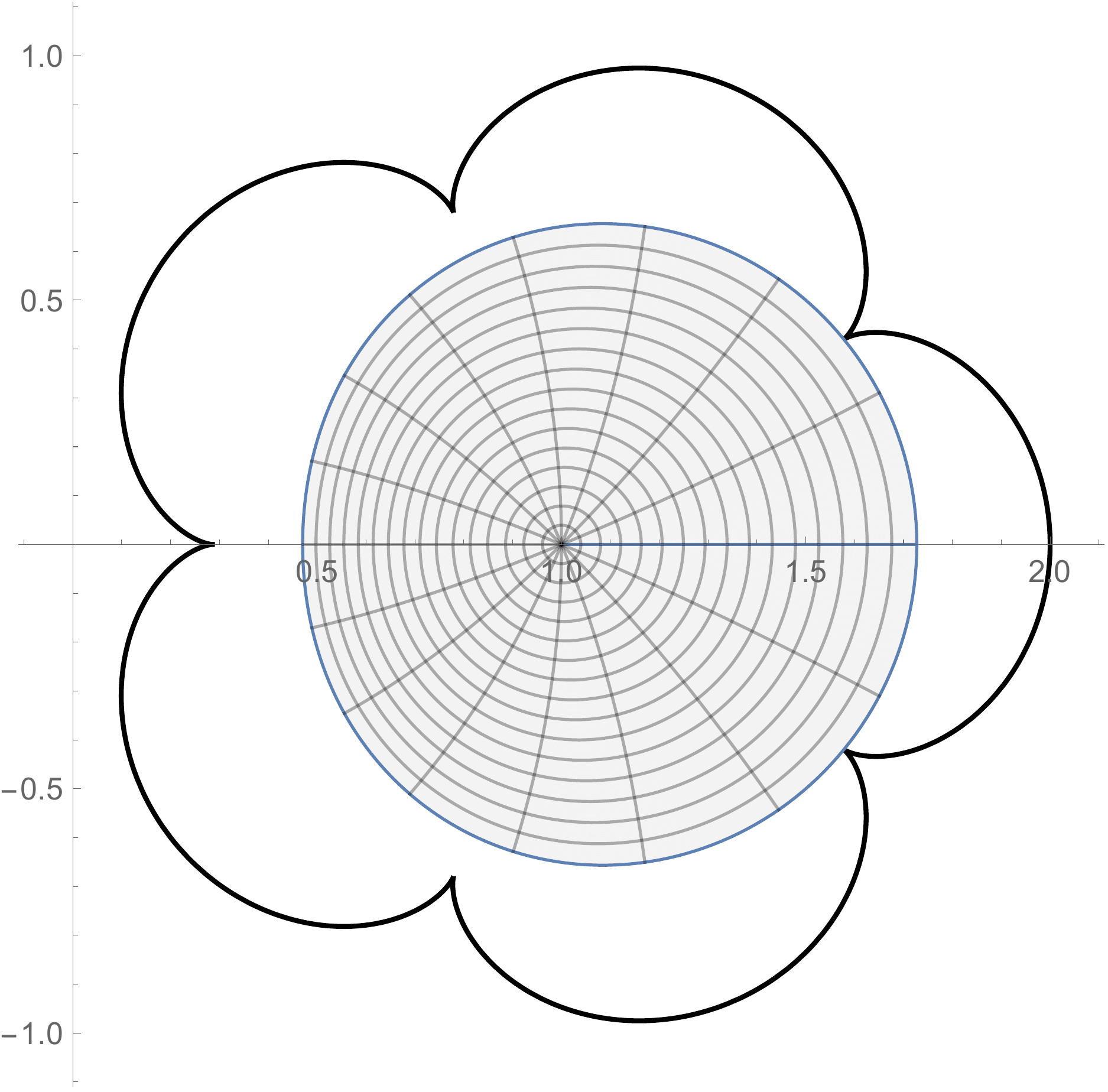}}\hspace{10pt}
		\subfigure[$\mathcal{S}^*(1+ze^z)$]{\includegraphics[width=1.2in]{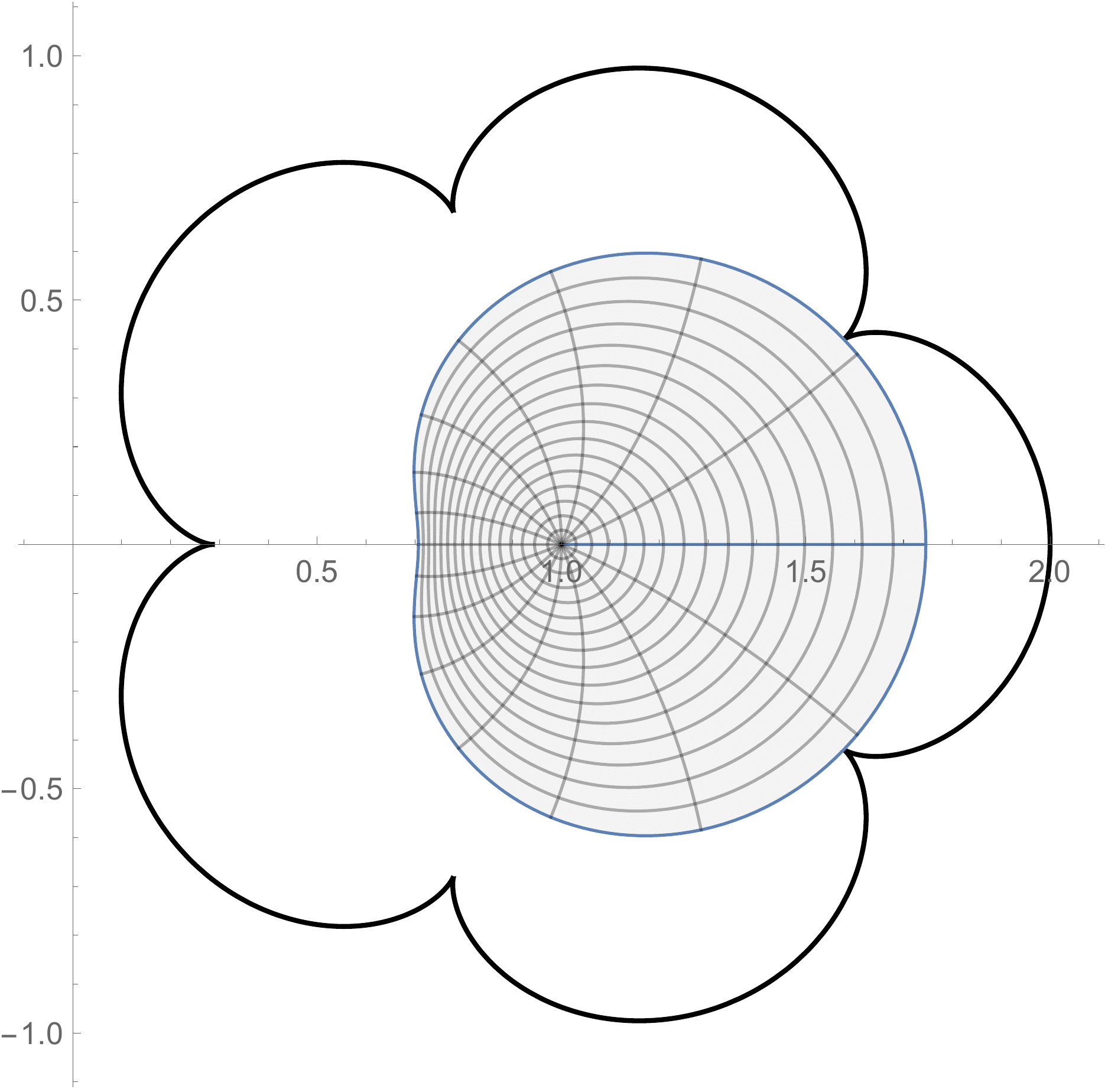}}\hspace{10pt}
			\subfigure[$\mathcal{S}^*_{car}$]{\includegraphics[width=1.2in]{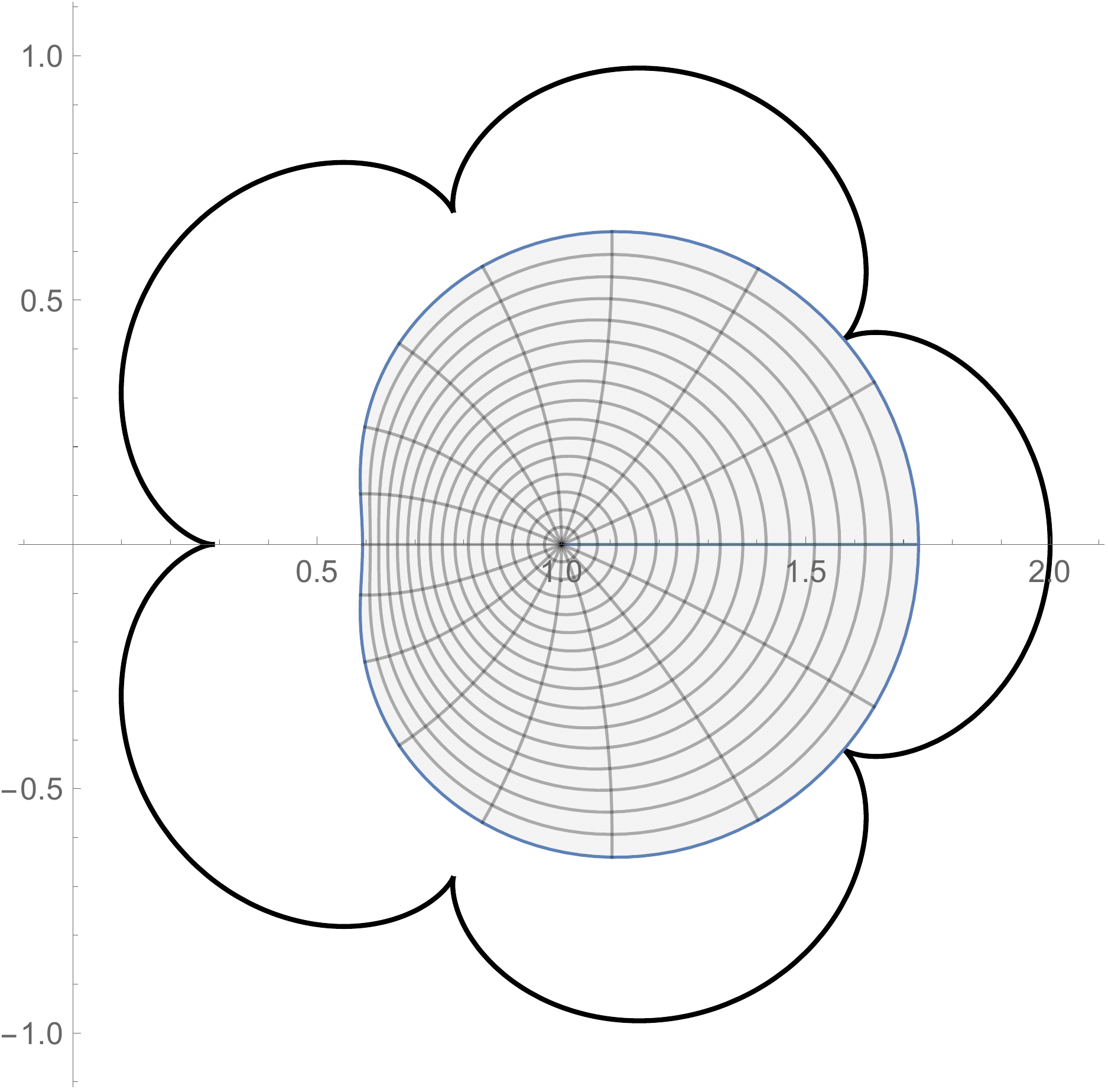}}\hspace{10pt}
			\caption{Sharpness of $\Snl$ radii for various classes }\label{fig5}
	\end{center}
\end{figure}
\begin{theorem}Let $n=2k,\,k\in\mathbb{N}.$ Then
	\begin{itemize}
		\item[(a)] The $\Snl-$radius for the class $\mathcal{S}^*_{sin}$ is given by
		$\mathcal{R}_{\Snl}(\mathcal{S}^*_{sin})=|R_2|,$ where
		\begin{align}
			R_2=\begin{cases}
				\sin^{-1}\left(\displaystyle\frac{\gamma^k(\gamma^{(n-1)}+n)}{n+1}\right),& k \text{ is odd},\\
				\\
				\sin^{-1}\left(\displaystyle\frac{\gamma^{(k-1)}(\gamma^{(n-1)}+n)}{n+1}\right),& k \text{ is even}.
			\end{cases}
		\end{align}
	\item[(b)] The $\Snl-$radius for the class $\mathcal{S}^*_{ne}$ is given by
		$\mathcal{R}_{\Snl}(\mathcal{S}^*_{ne})=|R_3|,$ where
	\begin{align}\label{eqn5.2}
		R_3=\begin{cases}
			\displaystyle\frac{\left(1+i\sqrt{3}\right)(n+1)}{2^{2/3}\delta_1}+\displaystyle\frac{\left(1-i\sqrt{3}\right)\delta_1}{2^{2/3}(n+1)},& k \text{ is odd},\\
			\\
			\displaystyle\frac{\left(1-i\sqrt{3}\right)(n+1)}{2^{2/3}\delta_2}+\displaystyle\frac{\left(1+i\sqrt{3}\right)\delta_2}{2^{2/3}(n+1)},& k \text{ is even},
		\end{cases}
	\end{align}
where
\begin{align}
	\delta_1&=\left(3(n+1)^2(\gamma^{kn}+\gamma^kn)+(n+1)^2\sqrt{9\gamma^k\left(\gamma^2n+2\gamma^{n+1}+\gamma^2n^2)-4(n+1)^2\right)}\right)^{2/3}\label{eqn4.5}\\
	\delta_2&=\left(3(n+1)^2\left(\gamma^{n(k-1)}+n\gamma^{k-1}\right)+(n+1)^2\sqrt{9\gamma^{k-1}\left(\gamma^{2n}+2\gamma^{n+1}n+\gamma^{2}n^2\right)-4(n+1)^2}\right)^{2/3}\label{eqn4.6}
\end{align}
\end{itemize}
Here $\gamma=e^{i\pi/(n-1)}.$
\end{theorem}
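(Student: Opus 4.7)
The strategy follows the geometric approach used throughout the preceding $\Snl$-radius theorems: for each class, find the largest $r$ such that the image of the disk $|z|\le r$ under the associated Ma--Minda function lies inside $\varphi_{n\mathcal{L}}(\disc)$; the critical $r$ is that for which the image boundary first touches a cusp of $\partial\varphi_{n\mathcal{L}}(\disc)$.

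The first step is to locate the cusps. At $t_j=(2j-1)\pi/(n-1)$ the identity $e^{int_j}=-e^{it_j}$ (which holds because $2j-1$ is odd) gives $\varphi_{n\mathcal{L}}(e^{it_j})-1=(n-1)e^{it_j}/(n+1)$; with $\gamma=e^{i\pi/(n-1)}$ the cusp takes the form
\[
\varphi_{n\mathcal{L}}(\gamma^{2j-1})=1+\frac{n\gamma^{2j-1}+\gamma^{n(2j-1)}}{n+1},
\]
whose numerator is exactly the combination appearing inside $\delta_1$ and $\delta_2$. For both $g_1(z)=1+\sin z$ and $g_2(z)=1+z-z^3/3$ the Taylor expansion $g_i(re^{i\theta})-1=re^{i\theta}+C_ir^3e^{3i\theta}+O(r^5)$ with $C_i<0$ gives $|g_i(re^{i\theta})-1|^2=r^2+2C_ir^4\cos 2\theta+O(r^6)$, so the image of $|z|=r$ elongates in the imaginary direction and contracts in the real direction; consequently, as $r$ grows the image first meets the cusp whose angular position is closest to $\pm\pi/2$. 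With $n=2k$ the admissible odd values of $2j-1$ lie in $\{1,3,\dots,2k-3\}$, and the one nearest to $(n-1)/2=k-\tfrac12$ is $2j-1=k$ if $k$ is odd and $2j-1=k-1$ if $k$ is even, identifying the critical cusp as $\varphi_{n\mathcal{L}}(\gamma^k)$ or $\varphi_{n\mathcal{L}}(\gamma^{k-1})$ respectively.

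For part~(a) the critical $r$ satisfies $\sin z=\gamma^{\mu}(\gamma^{n-1}+n)/(n+1)$ with $\mu\in\{k,k-1\}$; inverting via the principal branch of $\sin^{-1}$ and then taking modulus delivers $|R_2|$. For part~(b) the analogous equation is the depressed cubic $z^3-3z+3c=0$ with $c=(n\gamma^{\mu}+\gamma^{n\mu})/(n+1)$; I would solve it by Cardano's formula via $z=U+V$ with $UV=1$, reducing it to the resolvent quadratic $X^2+3cX+1=0$ in $X=U^3$, extracting the appropriate cube root, and assembling the desired root using the sixth-root-of-unity coefficients $\tfrac12(1\pm i\sqrt3)$. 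This reproduces the displayed form of $R_3$, with $\delta_1$ and $\delta_2$ encoding the chosen cube-root expressions for the two parities.

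The main obstacle is upgrading the above Taylor heuristic into a rigorous closest-cusp statement. I would carry this out by a monotonicity argument: writing $z=u+iv$, the identity $|\sin(u+iv)|^2=\sin^2 u+\sinh^2 v$ together with the constraints $\sin u\cosh v=\rho\cos\alpha$ and $\cos u\sinh v=\rho\sin\alpha$ (where $\rho=(n-1)/(n+1)$ and $\alpha=\arg\sin z$) yield, after a short Lagrange-multiplier calculation, that $u^2+v^2$ strictly decreases in $\alpha$ on $[0,\pi/2]$; an analogous argument handles the inverse of $z\mapsto z-z^3/3$. This forces the minimum of $|z|$ over all cusp targets to occur at the parity-selected cusp identified above, and sharpness then follows in the usual way by exhibiting a function in the relevant class for which $zf'(z)/f(z)$ attains the chosen cusp value at the boundary radius.
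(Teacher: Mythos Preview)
Your approach is essentially the same as the paper's: identify the critical cusp by the parity of $k$, set the Ma--Minda function equal to that cusp value, and solve for $r$ (via $\sin^{-1}$ in part~(a) and Cardano in part~(b)). The paper simply asserts which cusp to use---``the cusp considered is at the angle $k\pi/(n-1)$'' for $k$ odd and $(k-1)\pi/(n-1)$ for $k$ even---solves the resulting equation, and appeals to figures for sharpness, so your Taylor heuristic and proposed monotonicity argument for cusp selection actually go beyond what the paper supplies.
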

\begin{proof}
	(a) Let $f\in\mathcal{S}^*_{sin}$ and $n=2k,\,k\in\mathbb{N}.$ Let $k$ be odd. In this case, the cusp considered is at the angle $k\pi/(n-1).$ Thus the image of $zf'(z)/f(z)$ under $\disc_r$ lies in the domain $\varphi_{n\mathcal{L}}(\disc)$ for $r\leq |R_2|,$ where
	\[R_2=	\sin^{-1}\left(\displaystyle\frac{\gamma^k(\gamma^{(n-1)}+n)}{n+1}\right),\]
	 is the solution of the equation $\sin r(n+1)=ne^{ik\pi/(n-1)}+e^{ink\pi/(n-1)}.$ Proceeding in a similar way, we will consider the cusp at the angle $(k-1)\pi/(n-1)$ for the case when $k$ is even. The $\Snl-$radius is obtained by solving the equation $\sin r(n+1)=ne^{i(k-1)\pi/(n-1)}+e^{in(k-1)\pi/(n-1)}$ for $r$. This gives $r\leq |R_2|,$ where
	 \[\sin^{-1}\left(\displaystyle\frac{\gamma^{(k-1)}(\gamma^{(n-1)}+n)}{n+1}\right).\]
	\begin{figure}[h]
		\begin{center}
			\subfigure[$n=6$]{\includegraphics[width=2in]{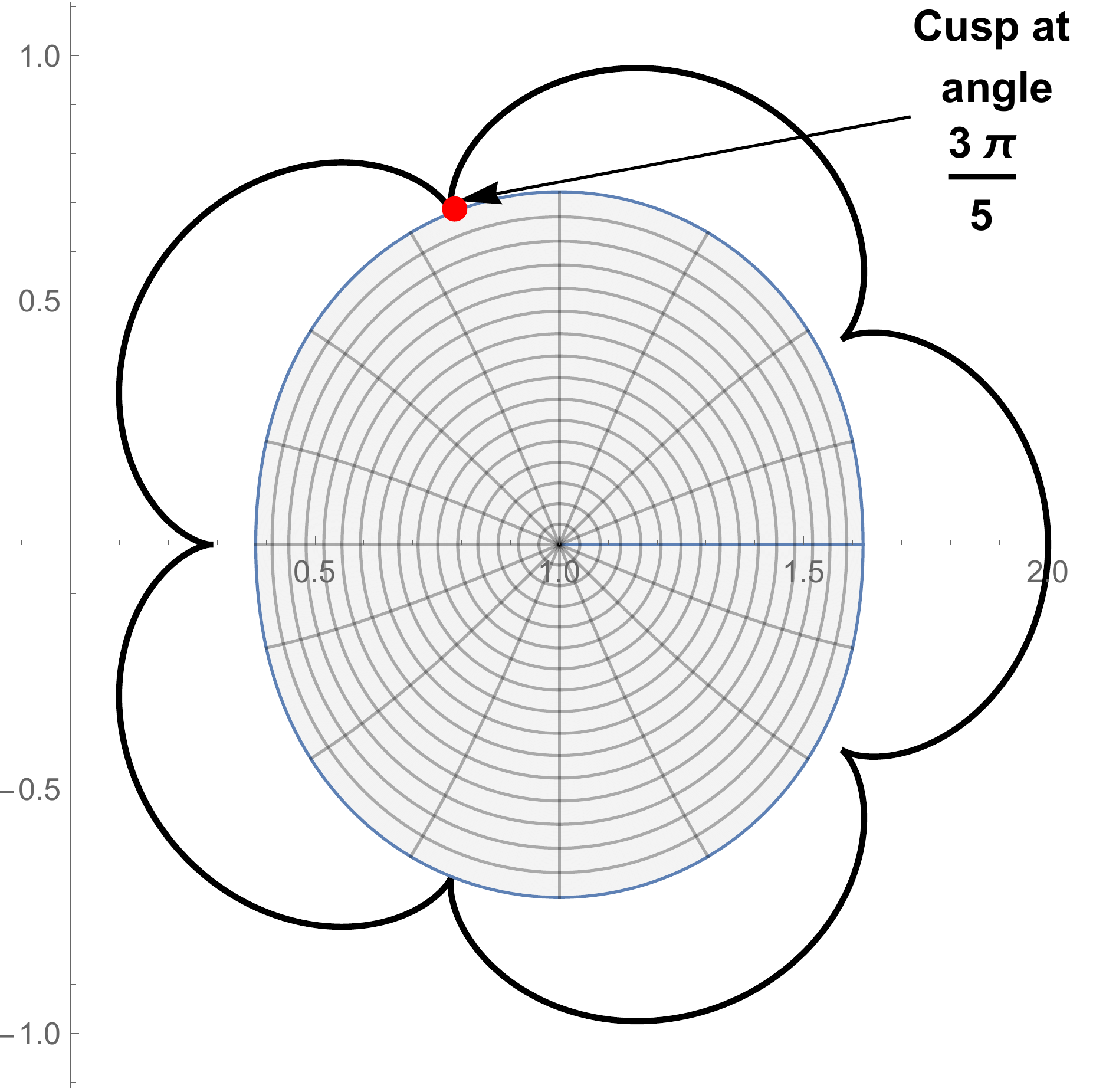}}\hspace{10pt}
			\subfigure[$n=8$]{\includegraphics[width=2in]{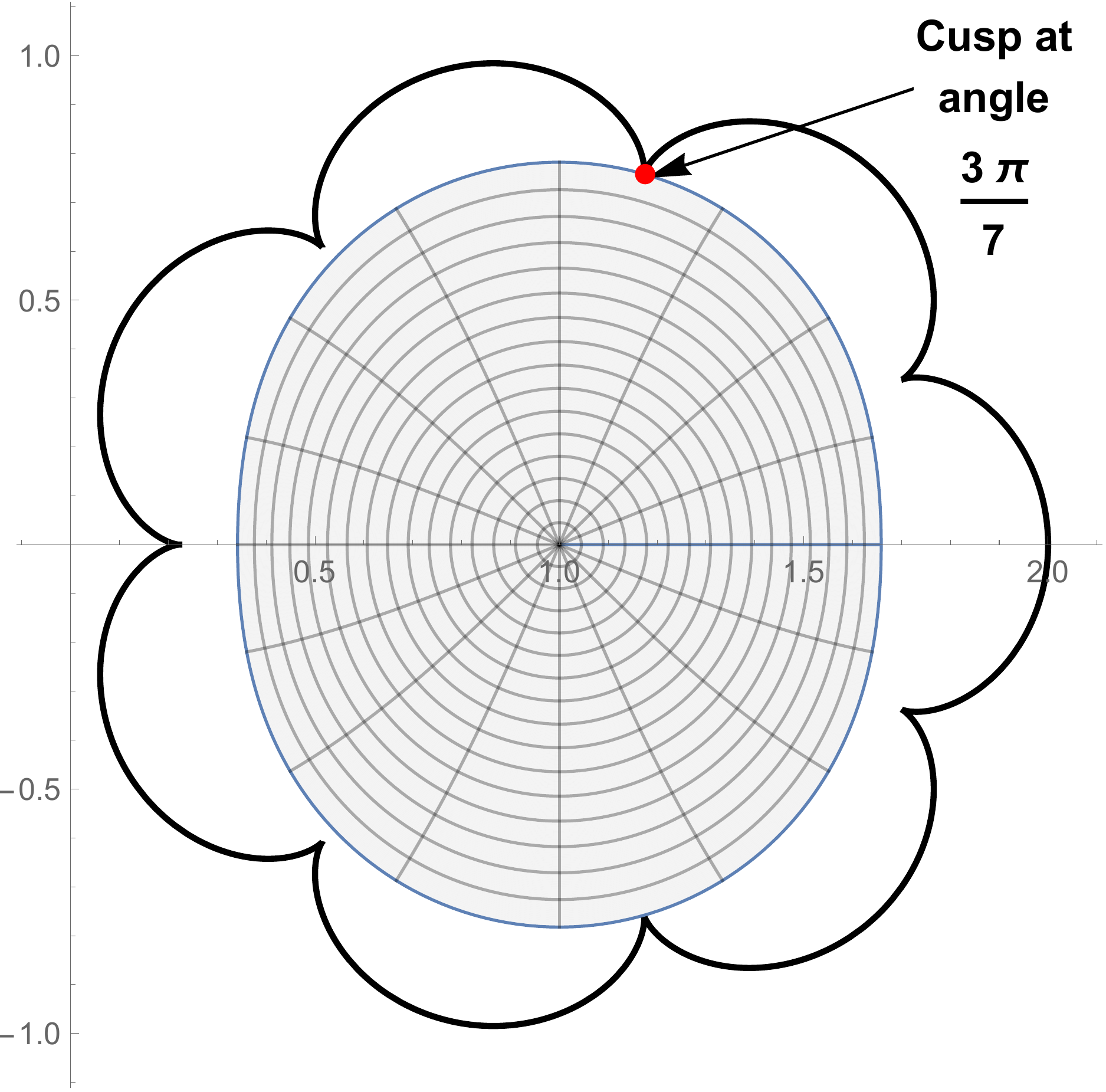}}\hspace{10pt}
			\subfigure[$n=10$]{\includegraphics[width=2in]{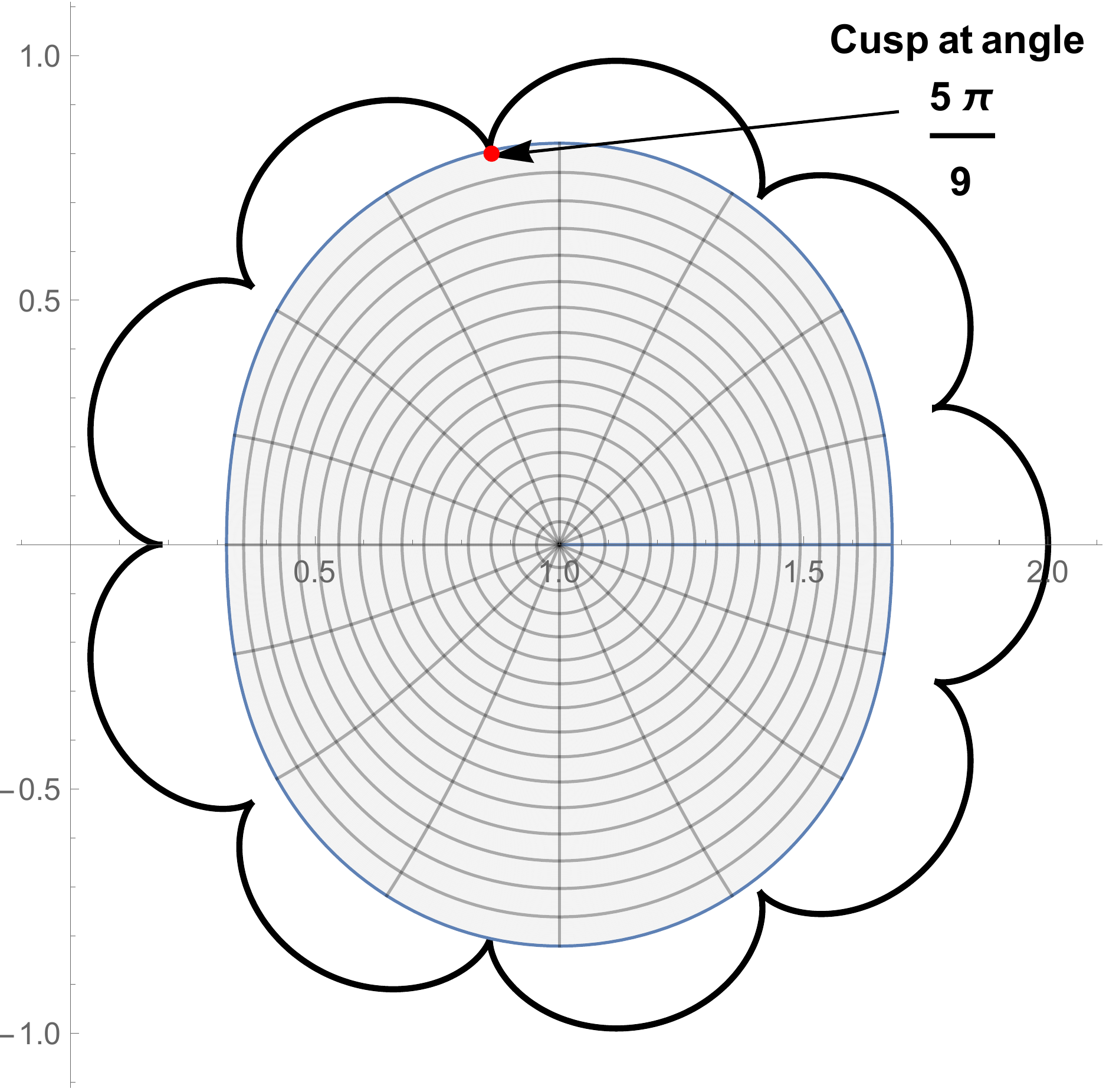}}\hspace{10pt}
			\subfigure[$n=12$]{\includegraphics[width=2in]{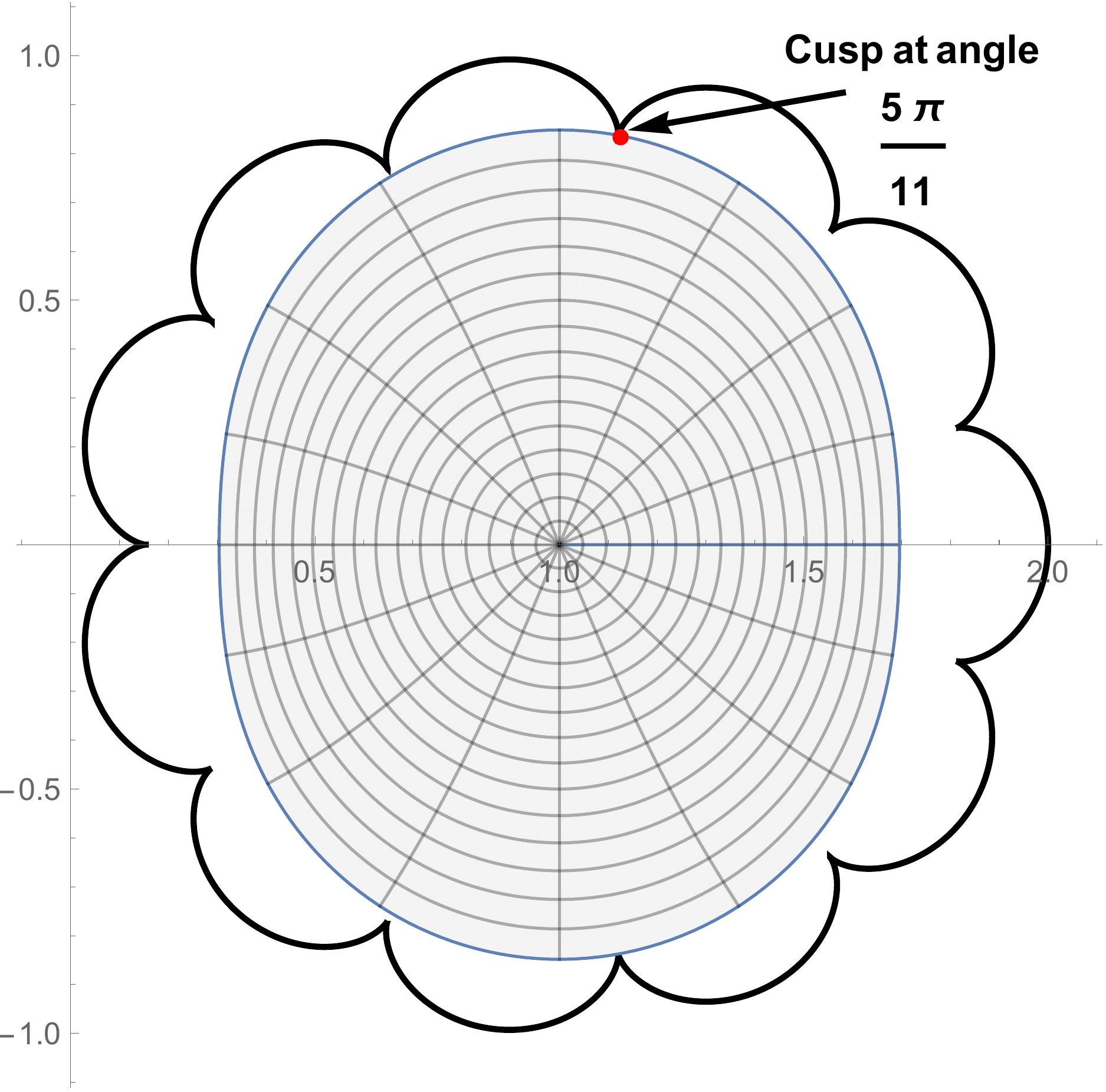}}\hspace{10pt}
			\caption{Image of $\varphi_{sin}(z)$ lying in various polyleaf domain}\label{fig6}
		\end{center}
	\end{figure}
For some choices $n$, sharpness for the above result is depicted in the Figure \ref{fig6}.
	 \par (b) Let $n=2k,\,k\in\mathbb{N}.$ Let us first consider the case when $k$ is odd. In this case, the desired radius is computed by considering the cusp at the angle $k\pi/(n-1).$ Thus, the image of the disk $|z|<r$ under the function $zf'(z)/f(z)$ lies in the domain $\varphi_{n\mathcal{L}}(\disc)$ for $r\leq |R_3|,$ where $R_3$ is the solution of the equation $(n+1)(3r-r^3)=3(ne^{ik\pi/(n-1)}+e^{ink\pi/(n-1)})$ given by
	 \[R_3=\frac{\left(1+i\sqrt{3}\right)(n+1)}{2^{2/3}\delta_1}+\displaystyle\frac{\left(1-i\sqrt{3}\right)\delta_1}{2^{2/3}(n+1)},\]
	 where $\delta_1$ is given by (\ref{eqn4.5}). Let us now assume that $k$ is even. We will consider the cusp at the angle $(k-1)\pi/(n-1).$ In this case, the $\Snl-$radius for the class $\mathcal{S}^*_{ne}$ is computed by solving the equation $(n+1)(3r-r^3)=3(ne^{i(k-1)\pi/(n-1)}+e^{in(k-1)\pi/(n-1)})$ for $r$. This gives $r\leq |R_3|,$ where
	 \[R_3=\frac{\left(1-i\sqrt{3}\right)(n+1)}{2^{2/3}\delta_2}+\displaystyle\frac{\left(1+i\sqrt{3}\right)\delta_2}{2^{2/3}(n+1)},\]
	 where $\delta_2$ is given by (\ref{eqn4.6}). The sharpness is illustrated for some choices of $n$ in the Figure \ref{fig7}.
	\begin{figure}[h]
		\begin{center}
				\subfigure[$n=6$]{\includegraphics[width=2in]{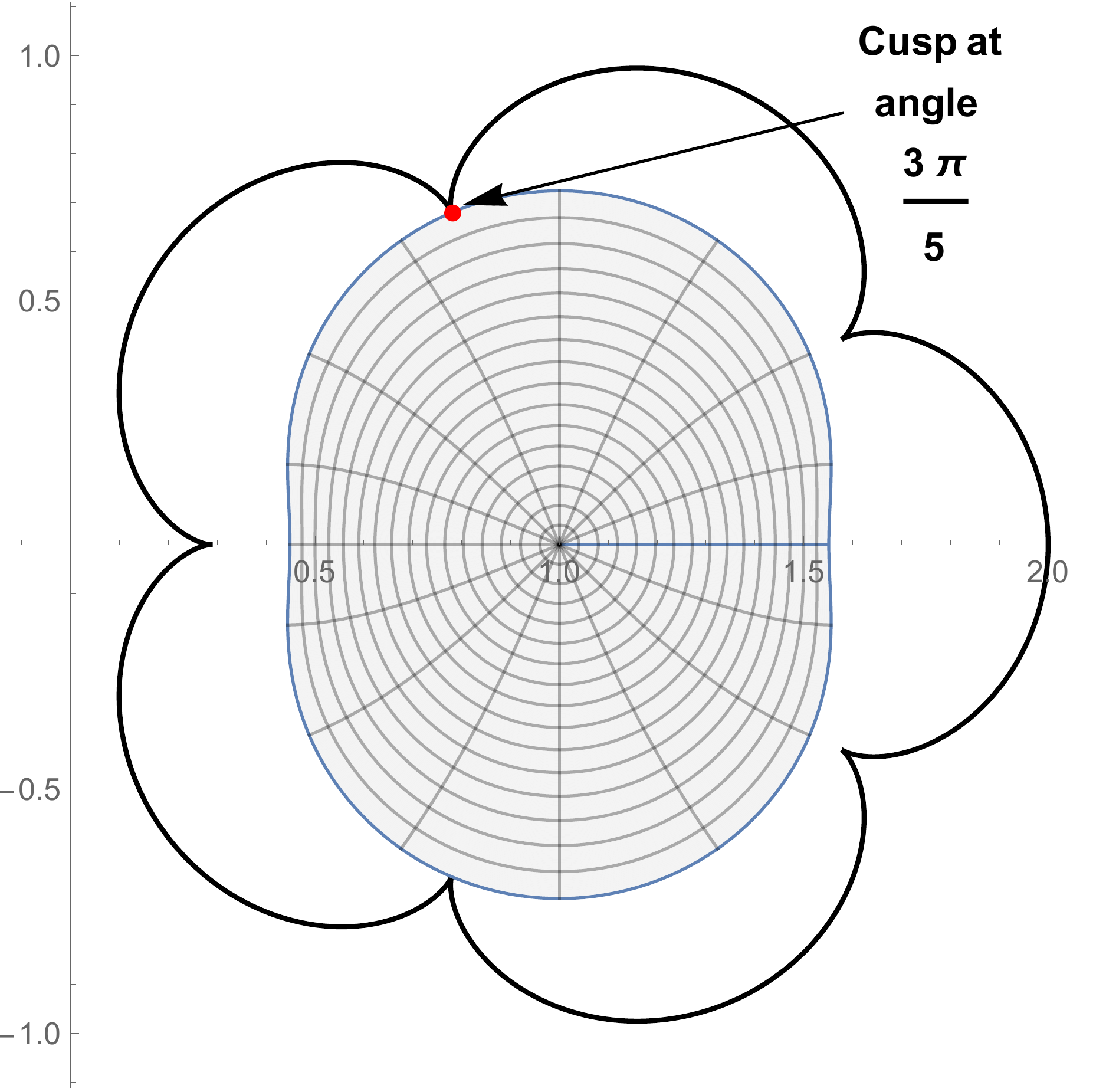}}\hspace{10pt}
					\subfigure[$n=8$]{\includegraphics[width=2in]{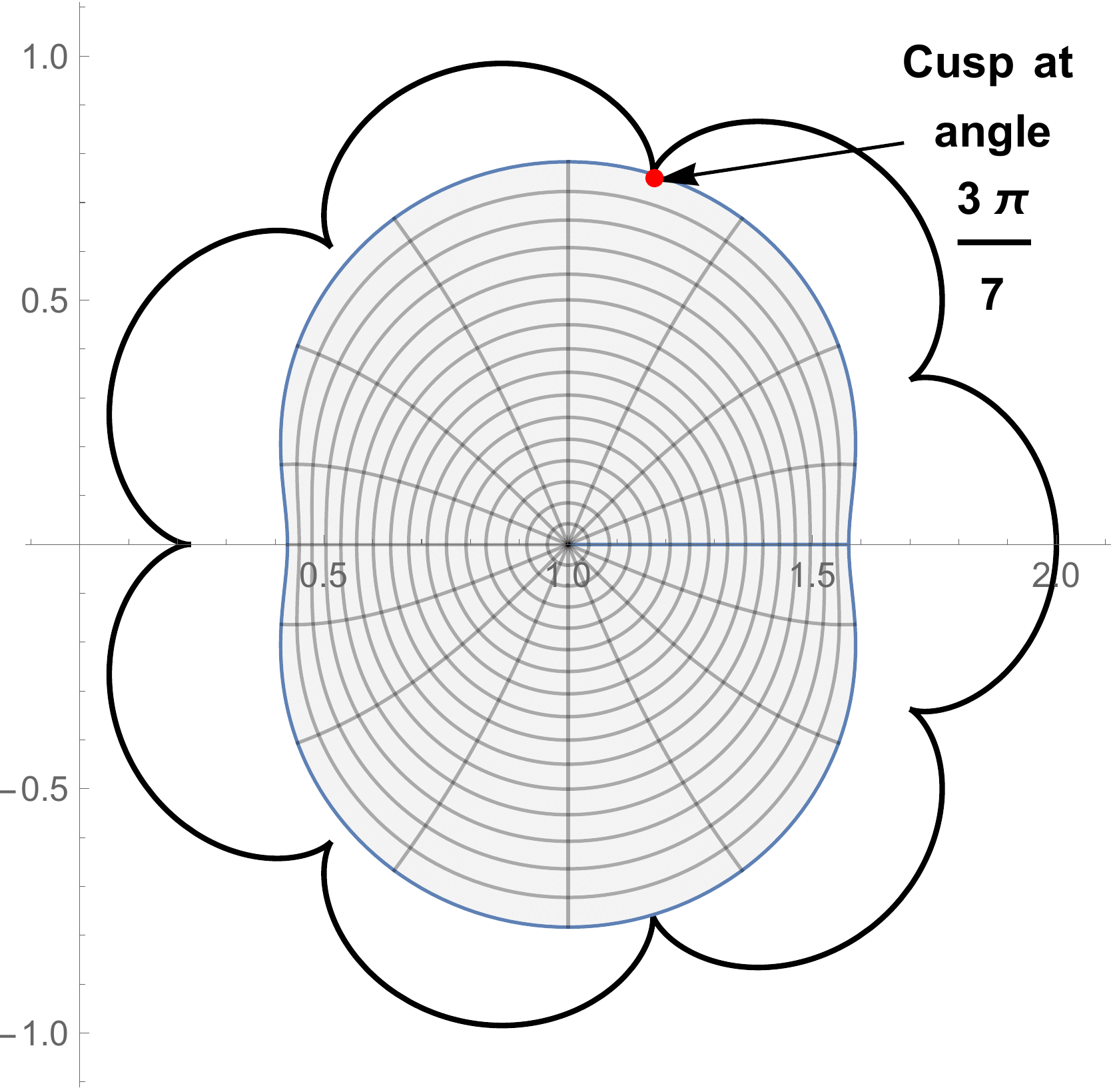}}\hspace{10pt}
					\subfigure[$n=10$]{\includegraphics[width=2in]{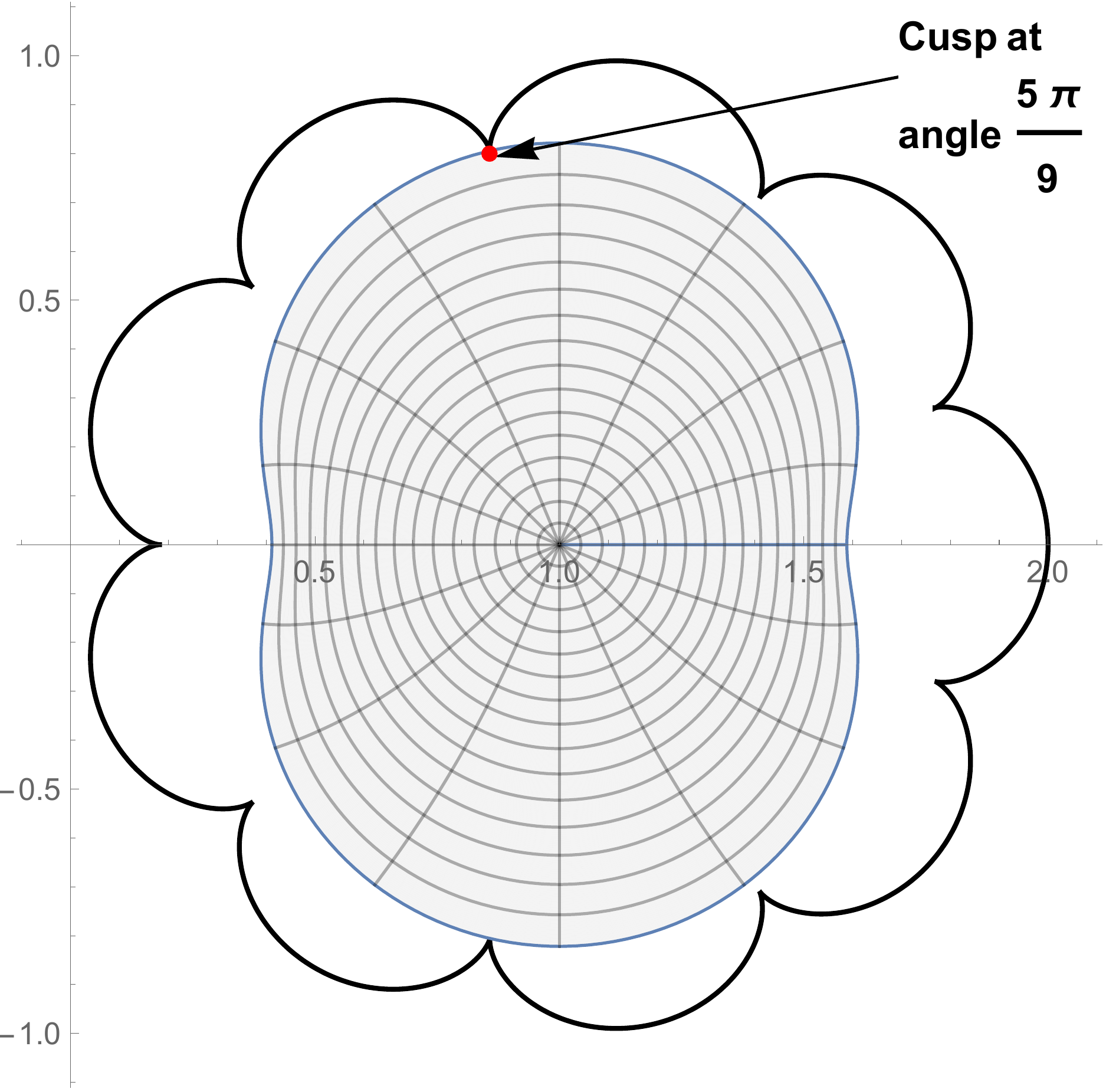}}\hspace{10pt}
					\subfigure[$n=12$]{\includegraphics[width=2in]{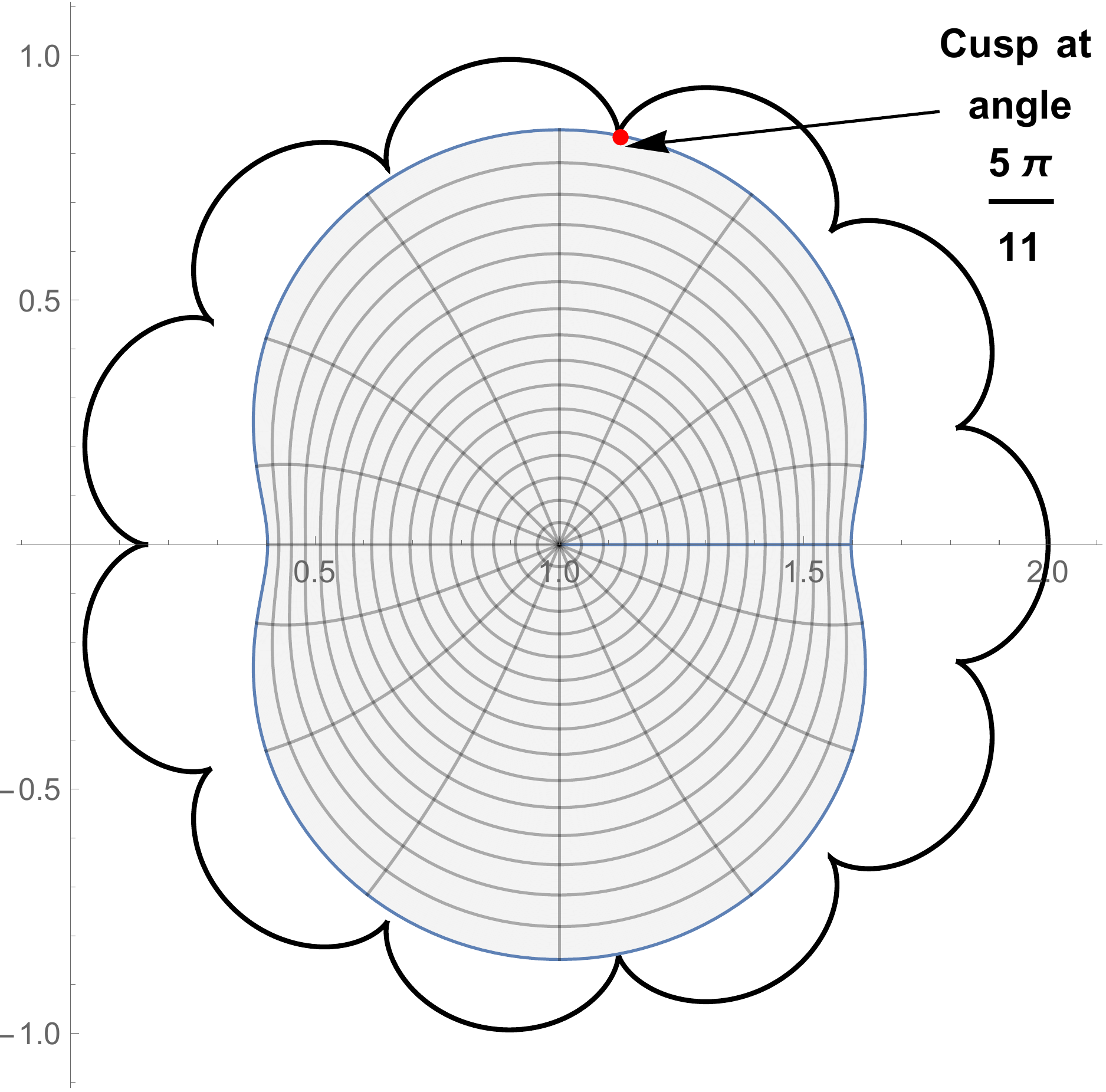}}\hspace{10pt}
				\caption{Nephroid domain lying in various polyleaf domain}\label{fig7}
		\end{center}
	\end{figure}
\end{proof}
The next theorem gives the $\Snl-$radius for some special Janowski classes. As proves earlier, this result is also obtained by considering the cusp at the angle $\pi/(n-1)$ and hence omitted here.
\begin{theorem}
	The $\Snl-$radius for some special Janowski classes is given by
	\begin{itemize}
		\item[(a)] $\mathcal{R}_{\Snl}(\mathcal{S}^*(\alpha))=\left|
		\displaystyle\frac{\gamma^n+n\gamma}{2(1-\alpha)+\gamma^n+2n(1-\alpha)+n\gamma}\right|$
		\item[(b)] $\mathcal{R}_{\Snl}(\mathcal{S}^*[\alpha,-\alpha])=\left|\displaystyle\frac{\gamma^n+n\gamma}{\alpha(2+\gamma^n+2n+n\gamma)}\right|,$ where $0<\alpha\leq1.$
		\item[(c)] $\mathcal{R}_{\Snl}(\mathcal{S}^*[1-\alpha,0])=\left|\displaystyle\frac{\gamma^n+n\gamma}{(n+1)(1-\alpha)}\right|$
		\item[(d)] $\mathcal{R}_{\Snl}(\mathcal{S}^*[1,-(M-1)/M])=\left|\displaystyle\frac{M(\gamma^n+n\gamma)}{-1+2M-\gamma^n+M\gamma^n-n+2Mn-\gamma n+Mn\gamma}\right|,$ for $M>1/2.$
	\end{itemize}
\end{theorem}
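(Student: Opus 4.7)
The plan is to treat all four Janowski subclasses uniformly by reducing to the cusp-touching argument that drives the earlier $\Snl$-radius theorems in this section. Given $f\in\mathcal{S}^*[A,B]$, the subordination $zf'(z)/f(z)\prec(1+Az)/(1+Bz)$ forces $zf'(z)/f(z)$ to send $|z|\le r$ into the Euclidean disk $D_r$ obtained as the image of $\{|z_0|\le r\}$ under the Janowski M\"obius map, centered on the real axis at $(1-ABr^2)/(1-B^2 r^2)$ with radius $(A-B)r/(1-B^2 r^2)$. I would first invoke \lemref{lemma} to confirm that, in each of the four parameter regimes, the center of $D_r$ at the sought-for $r$ lies in the range $1\le a<a_3$, so that the binding boundary feature of $\varphi_{n\mathcal{L}}(\disc)$ is precisely the cusp at angle $\pi/(n-1)$, namely the point $\omega:=\varphi_{n\mathcal{L}}(\gamma)=(n+1+n\gamma+\gamma^n)/(n+1)$ with $\gamma=e^{i\pi/(n-1)}$.

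Second, the largest admissible $r$ is the one for which $\partial D_r$ passes through $\omega$. Since $\partial D_r$ is parameterized as $\{(1+Az_0)/(1+Bz_0):|z_0|=r\}$, this amounts to solving the linear equation $(1+Az_0)/(1+Bz_0)=\omega$ and reading off $r=|z_0|$. Explicitly $z_0=(\omega-1)/(A-B\omega)$, and using $\omega-1=(n\gamma+\gamma^n)/(n+1)$, I would then substitute the four choices $(A,B)=(1-2\alpha,-1)$, $(\alpha,-\alpha)$, $(1-\alpha,0)$, and $(1,-(M-1)/M)$ into this formula and simplify to recover the four stated expressions. Each reduction is a routine rearrangement: for instance, part~(c) collapses immediately to $r=|\omega-1|/(1-\alpha)=|(\gamma^n+n\gamma)/((n+1)(1-\alpha))|$, while parts~(a), (b), (d) are one-step linear simplifications after clearing the common denominator $n+1$.

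Sharpness in each case is supplied by the standard Janowski extremal $f_{A,B}(z)=z(1+Bz)^{(A-B)/B}$, with the limiting form $ze^{Az}$ in part~(c), for which $zf'(z)/f(z)=(1+Az)/(1+Bz)$; evaluating at the boundary point $z_0$ produced above sends $zf'(z)/f(z)$ precisely to the cusp $\omega\in\partial\varphi_{n\mathcal{L}}(\disc)$, showing each radius is best possible.

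The principal obstacle is the initial appeal to \lemref{lemma}: one must verify, parameter regime by parameter regime, that the Janowski disk center $(1-ABr^2)/(1-B^2 r^2)$ really sits in the middle range of the lemma at the critical $r$, so that the cusp at $\pi/(n-1)$---rather than the vertices at $t=0$ or $t=\pi$---is what limits the radius. Once this qualitative check is in hand, everything else is elementary complex arithmetic, which is precisely why the authors remark that the argument is identical to the earlier theorems in this section and omit the detailed calculations.
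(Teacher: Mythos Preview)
Your proposal is correct and follows the same route the paper indicates: the authors state explicitly that ``this result is also obtained by considering the cusp at the angle $\pi/(n-1)$ and hence omitted here,'' and your core computation---solving $(1+Az_0)/(1+Bz_0)=\varphi_{n\mathcal{L}}(\gamma)$ for $z_0$ and reading off $r=|z_0|$---is precisely the calculation they have in mind (compare the proof of Theorem~\ref{inclusion}(f) and the pattern of Theorems~4.3--4.5). Your explicit invocation of \lemref{lemma} to justify that the cusp, rather than the endpoints $t=0$ or $t=\pi$, is the binding constraint is somewhat more careful than the paper itself, which treats this as geometrically evident; note, however, that the center $(1-ABr^2)/(1-B^2r^2)$ need not lie in $[1,a_3)$ for every admissible parameter value (e.g.\ in part~(a) the center falls below $1$ once $\alpha>1/2$), so your verification would in fact split into subcases---a wrinkle the paper sidesteps by arguing directly from the geometry of the growing Janowski disks.
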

\begin{remark}
	For $\alpha=0,$ the above result gives the $\Snl-$radius for the class $\mathcal{S}^*$ of starlike function and it is given by $|(\gamma^n+n\gamma)/(2+\gamma^n+2n+n\gamma)|,$ where $\gamma=e^{i\pi/(n-1)}.$ By using Mark Strohhacker's theorem, it is known that $\mathcal{K}\subset\mathcal{S}^*(1/2).$ Thus, the $\Snl-$radius for the class $\mathcal{K}$ is atleast $|(\gamma^n+n\gamma)/(1+\gamma^n+n+n\gamma)|.$
\end{remark}
\begin{remark}
	The $\Snl-$radius for the classes $\mathcal{S}^*_{SG}$, $\mathcal{S}^*(\cos z)$ and $\mathcal{S}^*(\cosh z)$ is $1$ as these domains lie inside the domain $\varphi_{n\mathcal{L}}(\disc)$ (as depicted by Figure \ref{fig9}).
		\begin{figure}[h]
		\begin{center}
			\subfigure[$\mathcal{S}^*_{SG}$]{\includegraphics[width=1.5in]{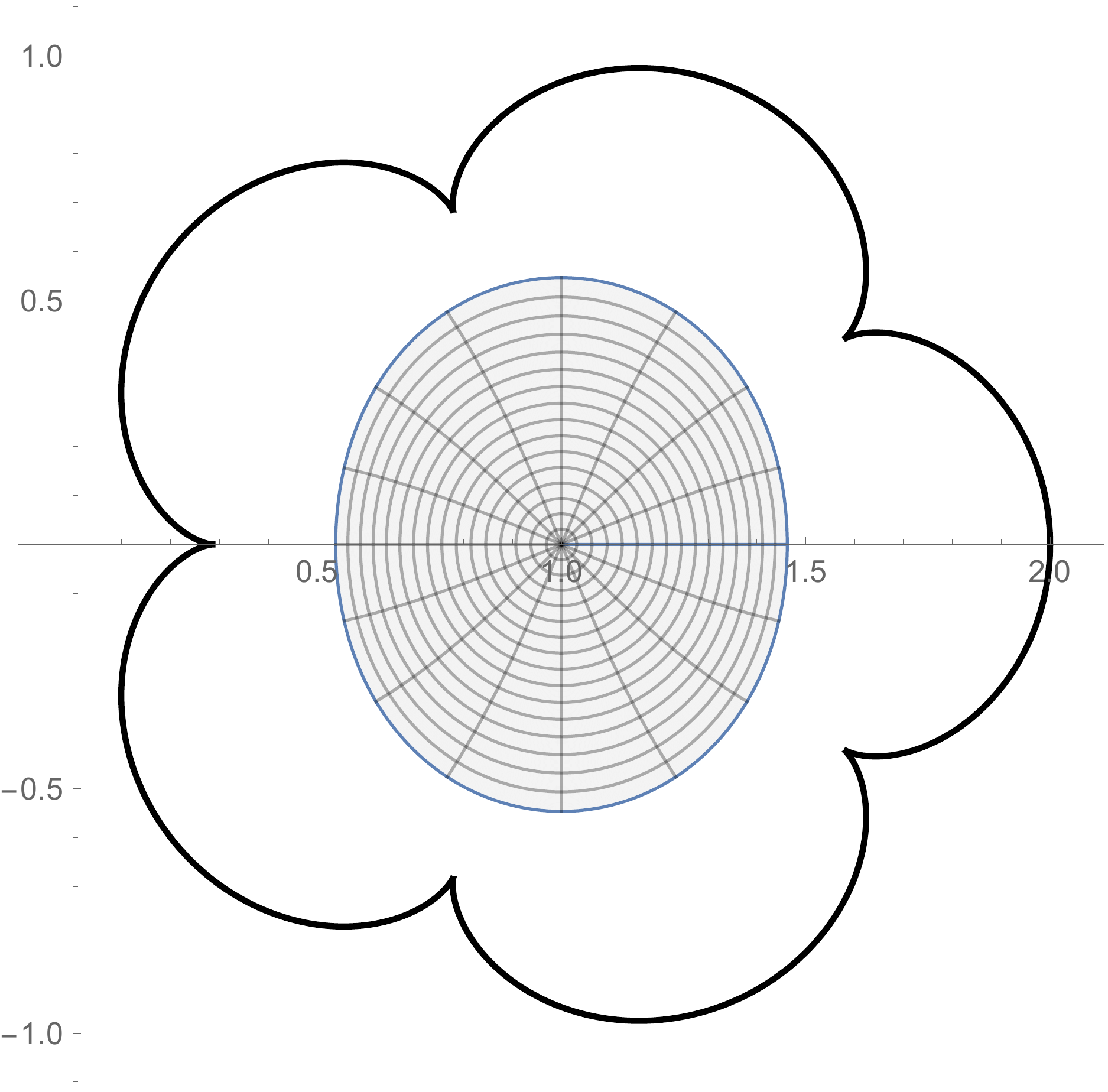}}\hspace{10pt}
			\subfigure[$\mathcal{S}^*(\cos z)$]{\includegraphics[width=1.5in]{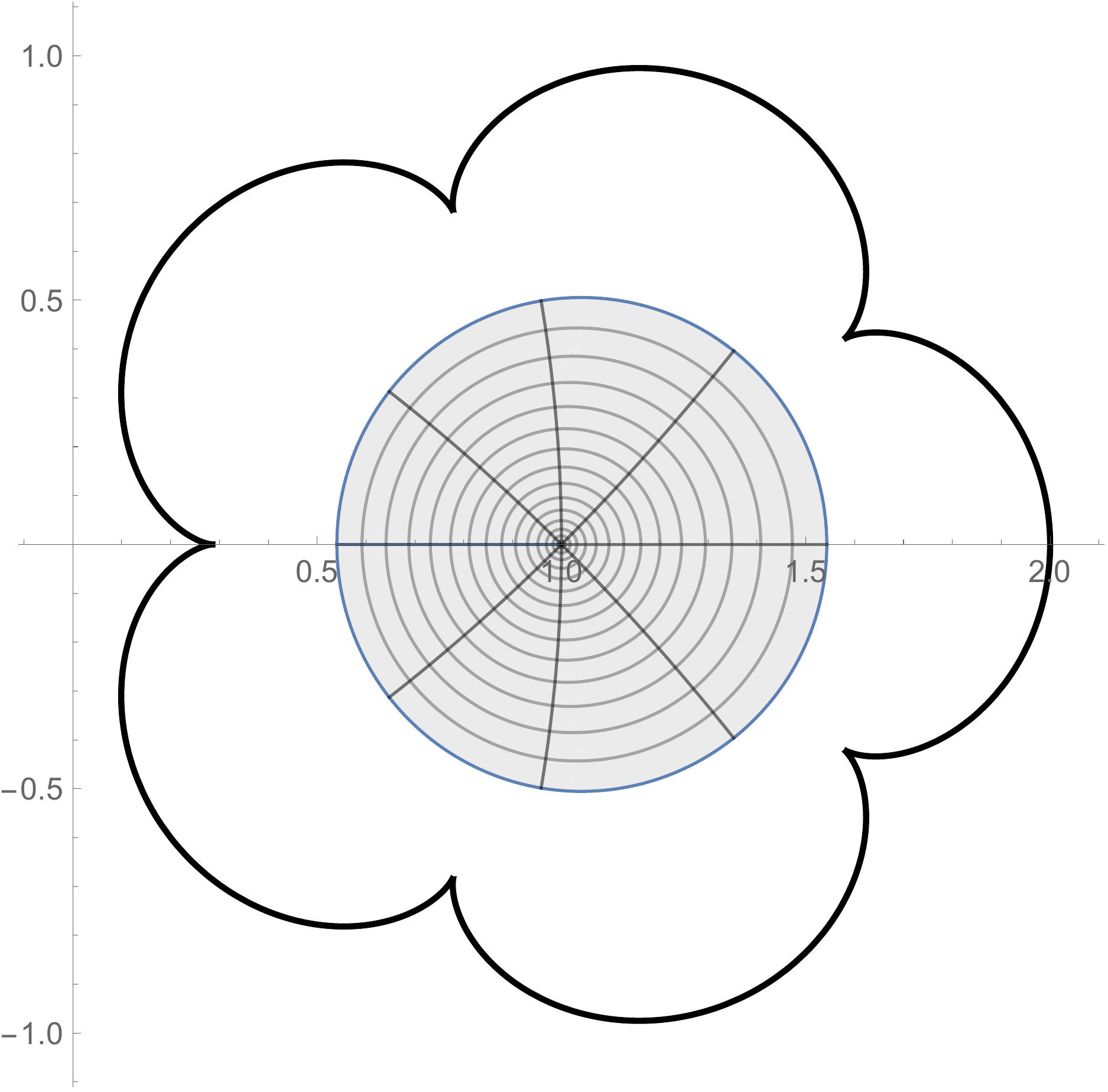}}\hspace{10pt}
			\subfigure[$\mathcal{S}^*(\cosh z)$]{\includegraphics[width=1.5in]{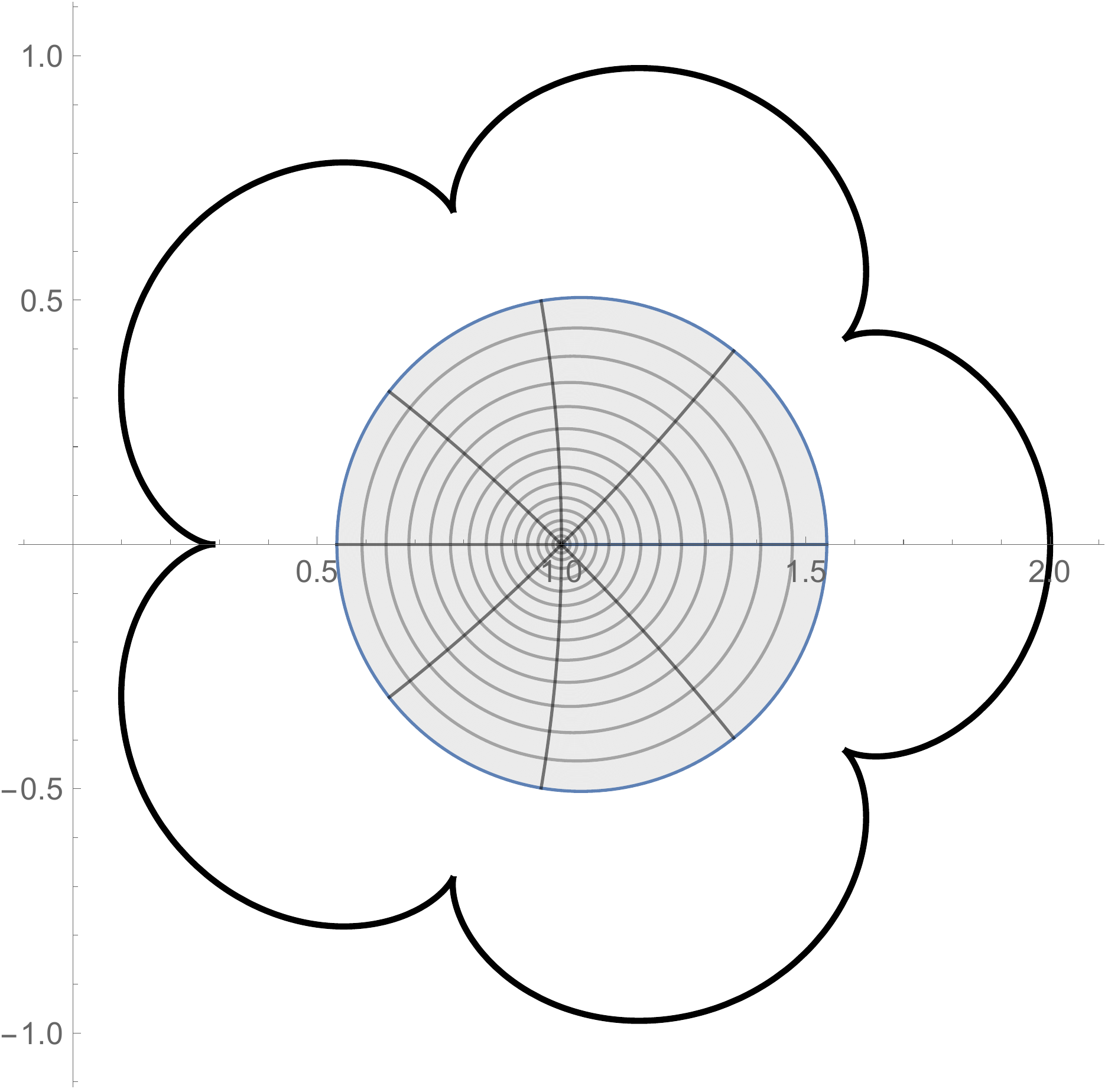}}\hspace{10pt}
			\caption{Domains lying inside $\varphi_{n\mathcal{L}}(\disc)$}\label{fig9}
		\end{center}
	\end{figure}
\end{remark}
\begin{remark}
	As mentioned earlier, the class $\Snl$ becomes the class $\mathcal{S}^*[1,0]$ for which the image domain is a disk with center $1$ and radius $1$ in the limiting case. Thus, $\mathcal{S}^*[1,0]-$radius for various classes can be obtained by taking the limit as $n\rightarrow\infty$ in the above proved results. The following table summarizes the $\mathcal{S}^*[1,0]-$radii.

\begin{table}
\begin{tabular}{ll}
	
	\begin{tabular}{QQA}
		\toprule
		\mbox{S.No.} & \mbox{Class} & \mbox{$n\rightarrow\infty$}\\\midrule
		(a)&\mathcal{W} & \sqrt{2}-1 \\\midrule
		(b)&\mathcal{F}_1 & \sqrt{5}-2 \\\midrule
		(c)&\mathcal{F}_2 & (\sqrt{17}-3)/4 \\\midrule
		(d)&\mathcal{S}^*_{RL} &  1 \\\midrule
		(e)&\mathcal{S}^*_{C} & \sqrt{5/2}-1 \\\midrule
		(f)&\mathcal{S}^*_{R} & -1-\sqrt{2}+\sqrt{6+4\sqrt{2}} \\\midrule
		(g)&\mathcal{S}^*_{\leftmoon} &3/4   \\\midrule
		(h)&\mathcal{S}^*_{lim} & 2-\sqrt{2}\\\midrule
		(i)&\mathcal{S}^*(1+ze^z) &  0.567143
		  \\\bottomrule
	\end{tabular}
&
\begin{tabular}{QQA}
	\toprule
	\mbox{S.No.} & \mbox{Class} & \mbox{$n\rightarrow\infty$}\\\midrule
	(a)&\mathcal{M}(\beta) & 1/(2\beta-1) \\\midrule
	(b)&\mathcal{B}\mathcal{S}(\alpha) & (1+\sqrt{1+4\alpha})/2\alpha\\\midrule
	(c)&\mathcal{S}\mathcal{L}^*(\alpha) & (2\alpha-1)/(\alpha-1)^2 \\\midrule
	(d)&\mathcal{S}^*_{\alpha,e} & \log((\alpha-2)/(\alpha-1)) \\\midrule
	(e)&\mathcal{S}^*(\alpha) & 1/(3-2\alpha) \\\midrule
	(f)&\mathcal{S}^*[1-\alpha,0] & 1/(\alpha-1) \\\midrule
	(g)&\mathcal{S}^*[\alpha,-\alpha] & 1/(3\alpha)  \\\midrule
	(h)&\mathcal{S}^*_M & M/(3M-2)
	\\\bottomrule
\end{tabular}
\end{tabular}
\hspace{10pt}\caption{Radii for the Limiting case}\label{tab3}	
\end{table}
\end{remark}
\section{Radius Constants for class $\Snl$}
\begin{theorem}
	The sharp radii constants for the class $\mathcal{S}^*_{n\mathcal{L}}$ as follows
	\begin{itemize}
		\item [(a)] The $\mathcal{S}\mathcal{L}^*(\alpha)-$radius is the smallest positive real root of the equation $r^n+rn-(\sqrt{2}-1)(1-\alpha)(n+1)=0,$ for $0\leq \alpha<1$.
		\item[(b)] The $\mathcal{S}^*_{RL}-$radius is the smallest positive real root of the equation $r^n+ rn-(n+1)\left(\sqrt{\gamma}-\gamma\right)^{1/2}=0$, where $\gamma=2\sqrt{2}-2.$
		\item[(c)] The $\mathcal{S}^*_R-$radius is the smallest positive real root of the equation $r^n-rn-(n+1)(2\sqrt{2}+3)=0$.
		\item[(d)] The $\mathcal{S}^*_{sin}-$radius is the smallest positive real root of the equation $r^n+rn-(n+1)\sin1=0$.
		\item[(e)] The $\mathcal{S}^*_{SG}-$radius is the smallest positive real root of the equation $r^n+rn-(n+1)(e-1)/(e+1)=0.$
		\item[(f)] The $\mathcal{S}^*_{ne}-$radius is the smallest positive real root of the equation $r^n+rn-2(n+1)/3=0.$
		\item[(g)] The $\mathcal{S}^*(1+ze^z)-$radius is the smallest positive real root of the equation $r^n-rn+(n+1)/e=0.$
		\item[(h)] The $\mathcal{S}^*(1+\sinh^{-1}(z))-$radius is the smallest positive real root of the equation $r^n+rn-(n+1)\sinh^{-1}(1)=0.$
		\item[(i)] The $\mathcal{M}(\beta)-$radius is the smallest positive real root of the equation $r^n+rn-(n+1)(\beta-1)=0,$ for $1<\beta\leq 2$ and the radius is $1,$ for $\beta\geq 2.$
		\item[(j)]  The $\mathcal{S}^*[1-\alpha,0]-$radius is the smallest positive real root of the equation $r^n+rn-(n+1)(1-\alpha)=0.$
	\end{itemize}
\end{theorem}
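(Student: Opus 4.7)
My plan is to translate each radius via the subordination principle and reduce to a one-parameter geometric inclusion. Since $f\in\Snl$ means $zf'(z)/f(z)\prec\varphi_{n\mathcal{L}}(z)$, for each $r\in(0,1)$ the image of $\disc_r=\{|z|<r\}$ under $zf'/f$ is contained in $\varphi_{n\mathcal{L}}(\disc_r)$. Consequently $f|_{\disc_r}$ belongs to a target Ma--Minda class $\mathcal{S}^{*}(\psi)$ iff $\varphi_{n\mathcal{L}}(\disc_r)\subseteq\psi(\disc)$, and the stated radius is the largest such $r$.

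Two elementary facts about $\varphi_{n\mathcal{L}}$ drive every computation: on $|z|=r$ one has $|\varphi_{n\mathcal{L}}(z)-1|\le(nr+r^{n})/(n+1)$, with equality at $z=re^{i2k\pi/(n-1)}$ for $k=0,1,\ldots,n-2$, and the real-axis extremes $\varphi_{n\mathcal{L}}(\pm r)=1\pm(nr\pm r^{n})/(n+1)$ are attained at $z=\pm r$ (using that $n$ is even). Given a target $\psi(\disc)$ symmetric about the real axis, I would locate the direction from $1$ in which $\partial\psi(\disc)$ is closest, read off the corresponding distance $d$, and then impose either $(nr+r^{n})/(n+1)=d$ when the binding direction is the right/general extreme, or $(nr-r^{n})/(n+1)=d$ when the leftmost real direction is binding.

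For parts (d)--(j) the target has its closest boundary point to $1$ on the real axis, so $d$ is read off from $\psi(\pm 1)$: for (d) $d=\sin 1$, for (e) $d=(e-1)/(e+1)$, for (f) $d=2/3$, for (g) $d=1/e$ (with the left real direction binding since $|\psi(-1)-1|<|\psi(1)-1|$), for (h) $d=\sinh^{-1}(1)$, for (i) $d=\beta-1$ with the sub-case $\beta\ge 2$ giving radius $1$ from $|\varphi_{n\mathcal{L}}-1|\le 1$ on $\disc$, and for (j) $d=1-\alpha$. In (a) $\mathcal{S}\mathcal{L}^{*}(\alpha)$ the target is a translated, scaled right lobe of the Bernoulli lemniscate whose boundary is closest to $1$ at the rightmost point $\alpha+(1-\alpha)\sqrt{2}$, giving $d=(\sqrt{2}-1)(1-\alpha)$. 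Parts (b) and (c) are more delicate: $\psi_{RL}(\disc)=\{w:|(\sqrt{2}-w)^{2}-1|<1\}$ is a Bernoulli lemniscate centred at $\sqrt{2}$ whose closest boundary point to $1$ lies strictly off the real axis, and a direct minimization of $|w-1|$ on $|(\sqrt{2}-w)^{2}-1|=1$ via the substitution $v=\sqrt{2}-w$ yields the minimum distance $(\sqrt{\gamma}-\gamma)^{1/2}$ with $\gamma=2\sqrt{2}-2$; (c) follows from an analogous explicit minimum-distance computation on $\partial\psi_{R}(\disc)$.

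Sharpness in every case is witnessed by the Ma--Minda extremal $f_{n\mathcal{L}}$ of (\ref{eqn4}), which satisfies $zf_{n\mathcal{L}}'(z)/f_{n\mathcal{L}}(z)\equiv\varphi_{n\mathcal{L}}(z)$: at $z=\pm r^{\ast}$ (as dictated by the binding direction) this logarithmic derivative lands on $\partial\psi(\disc)$, so the inclusion fails for any $r>r^{\ast}$. The main obstacle is the case-by-case geometric minimization of $|\psi(e^{it})-1|$ needed to identify the binding direction---routine for convex or star-like targets but requiring explicit parametric work in (b) and (c)---together with the verification that the petal tips of $\varphi_{n\mathcal{L}}(\disc_{r^{\ast}})$ at $z=r^{\ast}e^{i 2k\pi/(n-1)}$ align sufficiently with the nearest boundary point of $\psi(\disc)$ so that the modulus estimate is in fact tight for every admissible $n$.
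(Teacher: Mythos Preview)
Your approach is essentially the paper's: both reduce via subordination to the geometric inclusion $\varphi_{n\mathcal{L}}(\disc_r)\subseteq\psi(\disc)$, control it through the estimate $|\varphi_{n\mathcal{L}}(z)-1|\le(nr+r^{n})/(n+1)$ together with the largest disk about $1$ contained in each target $\psi(\disc)$ (the paper simply cites these disk-inclusion constants from \cite{KHATTER,MEND,GOEL,WANI2,SIVA1} rather than minimizing $|w-1|$ on $\partial\psi(\disc)$ directly), and establish sharpness with the extremal $f_{n\mathcal{L}}$ exactly as you describe. One small correction to your case analysis: for (c) the nearest boundary point of $\psi_R(\disc)$ to $1$ is the left-real value $\psi_R(-1)=2(\sqrt{2}-1)$, not an off-axis point, so (c) is handled like (g) via the left-real comparison $\varphi_{n\mathcal{L}}(-r)\ge\psi_R(-1)$ rather than by the off-axis minimization you outline for (b).
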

\begin{proof}
	Let $f\in\Snl.$ Then $zf'(z)/f(z)\prec\varphi_{n\mathcal{L}},$ where $\varphi_{n\mathcal{L}}$ is given by (\ref{eqn2}). For $|z|=re^{it},$
	\begin{align}	\label{eqn3}
	\left|\frac{zf'(z)}{f(z)}-1\right|\leq \frac{nr}{n+1}+\frac{r^n}{n+1}.
	\end{align}
	(a) By using \cite[Lemma 2.3, pp 6]{KHATTER}, it can be obtained that the disk (\ref{eqn3}) lies inside the lemniscate of Bernoulli $|((w-\alpha)/(1-\alpha))^2-1|=1$ if
	\[\frac{nr}{n+1}+\frac{r^n}{n+1}\leq (\sqrt{2}-1)(1-\alpha).\]
This gives $r\leq s_1,$ where $s_1$ is the smallest positive real root of the equation $r^n+rn-(\sqrt{2}-1)(1-\alpha)(n+1)=0,$ for $0\leq \alpha<1$. For sharpness, consider the function $f_{n\mathcal{L}}(z)$ given by (\ref{eqn4}). The value of $zf_{n\mathcal{L}}'(z)/f_{n\mathcal{L}}(z)$ is $\sqrt{2},$ for $z=s_1.$
	\par (b)  The disk (\ref{eqn3}) lies in the left-hand side of reverse lemniscate of Bernoulli $|(w-\sqrt{2})^2-1|=1$ if
\[\frac{nr}{n+1}+\frac{r^n}{n+1}\leq\sqrt{\sqrt{2\sqrt{2}-2}-2\sqrt{2}+2},\]
by  \cite[Lemma 3.2, pp 10]{MEND}. This simplifies to $r\leq s_2,$ where $s_2$ is the smallest positive real root of the equation $r^n+ rn-(n+1)\left(\sqrt{\gamma}-\gamma\right)^{1/2}=0$, where $\gamma=2\sqrt{2}-2.$ The result is sharp for the function $f_{n\mathcal{L}}$ given by (\ref{eqn4}).
\par (c) The subordination $\varphi_{n\mathcal{L}}(z)\prec\varphi_R(z)$ holds for $\disc_r$ if
\[2(\sqrt{2}-1)\leq \varphi_R(-1)\leq \varphi_{n\mathcal{L}}(-r)=1-\frac{n r}{n+1}+\frac{r^n}{n+1},\]
for $n$ is even. This gives $r\leq s_3,$ where $s_3$ is the smallest positive real root of the equation $r^n-rn-(n+1)(2\sqrt{2}+3)=0$.The bound is best possible for the function $f_{n\mathcal{L}}$ given by (\ref{eqn4}). For $z=s_3,$ the quantity $zf_{n\mathcal{L}}'(z)/f_{n\mathcal{L}}(z)=2(\sqrt{2}-1).$
\par (d) Similarly, the disk (\ref{eqn3}) lies in the image domain of $\varphi_{sin}(\disc)$ if
\[1+\frac{nr}{n+1}+\frac{r^n}{n+1}\leq\varphi_{n\mathcal{L}}(r)\leq\varphi_{sin}(1)=1+\sin1.\]
This holds for $r\leq s_4,$ where $s_4$ is the smallest positive real root of the equation $r^n+rn-(n+1)\sin1=0$. The result is best possible for the function $f_{n\mathcal{L}}(z)$ given by (\ref{eqn4}) and $zf_{n\mathcal{L}}'(z)/f_{n\mathcal{L}}(z)=1+\sin1,$ for $z=s_4.$
\par (e) By \cite[Lemma 2.2, pp 5]{GOEL}, the disk (\ref{eqn3}) lies in the modified sigmoid $|\log(w/(2-w))|=1$ if
\[\frac{nr}{n+1}+\frac{r^n}{n+1}\leq \frac{e-1}{e+1}.\]
This simplifies to $r\leq s_5,$ where $s_5$ is the smallest positive real root of the equation $r^n+rn-(n+1)(e-1)/(e+1)=0.$ The bound cannot be improved further as for $z=s_5,$ $zf_{n\mathcal{L}}'(z)/f_{n\mathcal{L}}(z)$ assumes value $2e/(e+1),$ where $f_{n\mathcal{L}}(z)$ is given by (\ref{eqn4}).
\par (f) \cite[Lemma 2.2, pp 8]{WANI2} gives the following condition for the disk (\ref{eqn3})  to lie inside the nephroid
\[\frac{nr}{n+1}+\frac{r^n}{n+1}\leq \frac{2}{3}.\]
This gives $r\leq s_6,$ where $s_6$ is the smallest positive real root of the equation $r^n+rn-2(n+1)/3=0.$ For sharpness, consider the function $f_{n\mathcal{L}}(z)$ given by (\ref{eqn4}). For $z=s_6,$ the value of $zf_{n\mathcal{L}}'(z)/f_{n\mathcal{L}}(z)$ is $5/3.$
\par (g) For $|z|<r,$ a necessary condition for the subordination $\varphi_{n\mathcal{L}}(z)\prec1+ze^z$ to hold is
\[1-\frac{1}{e}\leq \varphi_{n\mathcal{L}}(-r)=1-\frac{nr}{n+1}+\frac{r^n}{n+1}.\]
This simplifies to $r\leq s_7,$ where $s_7$ is the smallest positive real root of the equation $r^n-rn+(n+1)/e=0.$ The result is sharp for the function $f_{n\mathcal{L}}(z)$ given by (\ref{eqn4}) and $zf_{n\mathcal{L}}'(z)/f_{n\mathcal{L}}(z)=1-1/e,$ for $z=-s_7.$
\par (h) By using \cite[Lemma 2.1, pp 4]{SIVA1}, we get the disk (\ref{eqn3}) lie inside the image domain of the function $1+\sinh^{-1}(z)$ if
\[\frac{r^n}{n+1}+\frac{nr}{n+1}\leq \sinh^{-1}(1),\]
which simplifies to $r\leq s_8,$ where $s_8$ is the smallest positive real root of the equation $r^n+rn-(n+1)\sinh^{-1}(1)=0.$ The bounds are sharp for the function $f_{n\mathcal{L}}(z)$ given by (\ref{eqn4}). For $z=s_8,$ $zf_{n\mathcal{L}}'(z)/f_{n\mathcal{L}}(z)=1+\sinh^{-1}(1).$
\par (i)  As seen earlier, $\Snl\subset\mathcal{M}(\beta)$ for $\beta>2.$ Let us now assume that $1<\beta\leq 2.$ For $|z|=r<1,$
\[\RE\left(\frac{zf'(z)}{f(z)}\right)<1+\frac{nr}{n+1}+\frac{r^n}{n+1}<\beta,\]
provided $r<s_9,$ where $s_9$ is the smallest positive real root of the equation $r^n+rn-(n+1)(\beta-1)=0$. For the function $f_{n\mathcal{L}}$, the quantity $zf_{n\mathcal{L}}'(z)/f_{n\mathcal{L}}(z)=\beta$ at $z=s_9.$
\par (j) The disk (\ref{eqn3}) lies in the domain $|w-1|<1-\alpha$ if
\[\frac{r^n}{n+1}+\frac{nr}{n+1}\leq 1-\alpha,\]
which gives $r\leq s_{10},$ where $s_{10}$ is the smallest positive real root of the equation $r^n+rn-(n+1)(1-\alpha)=0.$ The result is sharp for the function $f_{n\mathcal{L}}(z)$ given by (\ref{eqn4}) and for $z=s_{10},$ $zf_{n\mathcal{L}}'(z)/f_{n\mathcal{L}}(z)=2-\alpha.$
\par For some choices of $n$, the radii are computed is tabulated in Table \ref{TAB2} and the sharpness for these results is illustrated by Figure \ref{fig8} for $n=8.$
\end{proof}
\begin{figure}
	\begin{center}
		\subfigure[$\mathcal{S}^*_{L}$]{\includegraphics[width=1.5in]{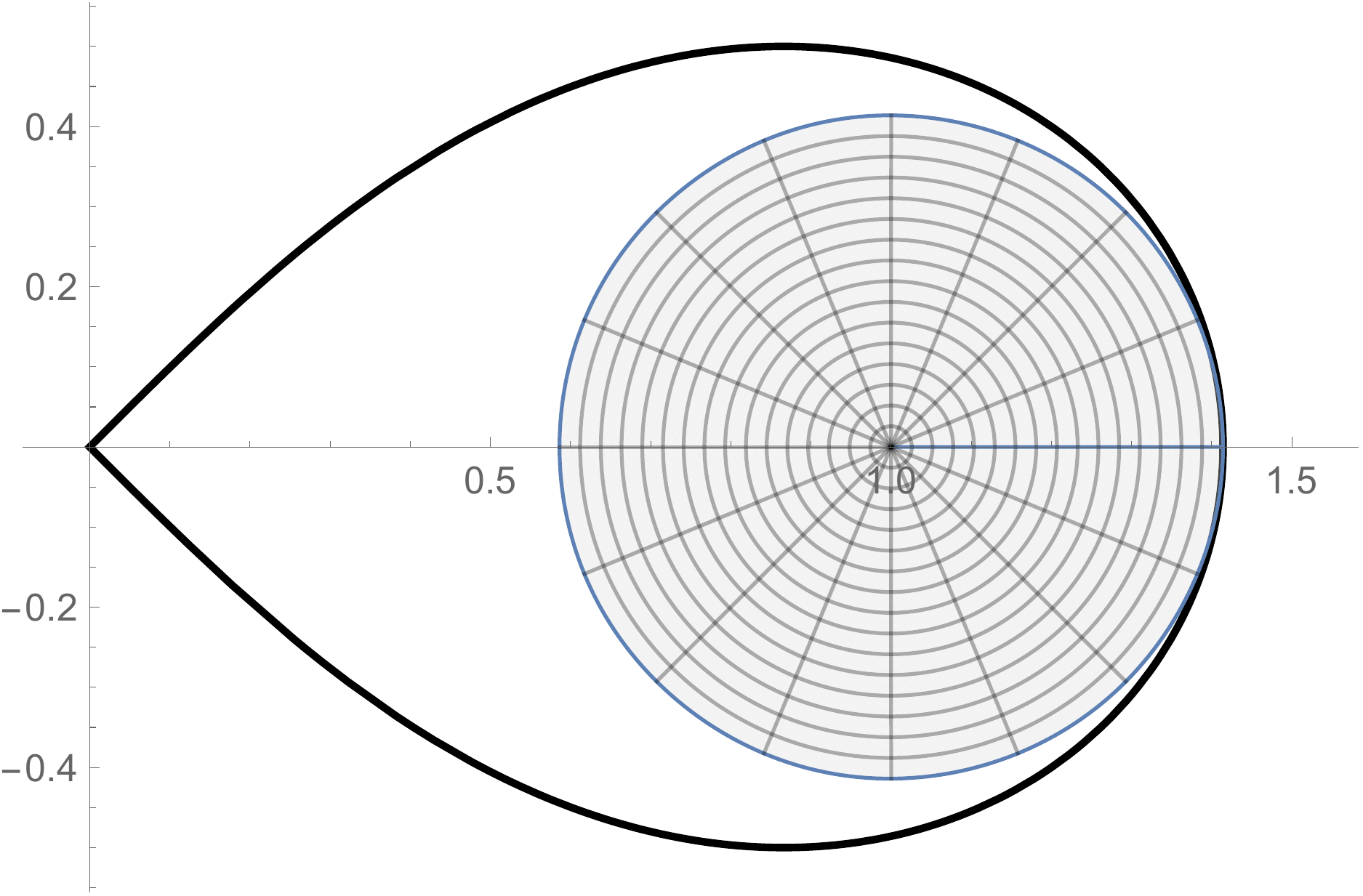}}\hspace{10pt}
		\subfigure[$\mathcal{S}^*_{RL}$]{\includegraphics[width=1.5in]{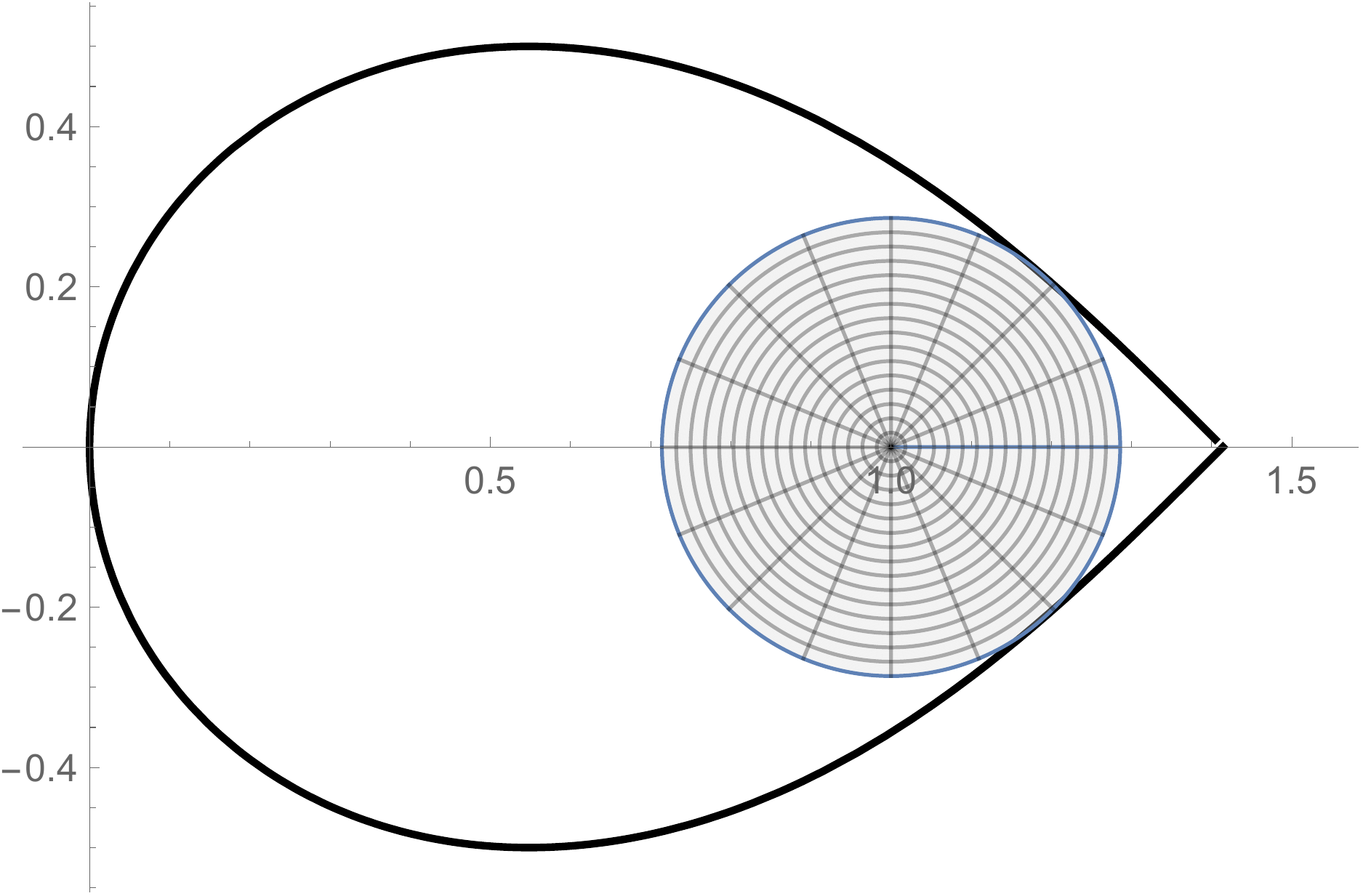}}\hspace{10pt}
		\subfigure[$\mathcal{S}^*_R$]{\includegraphics[width=1.5in]{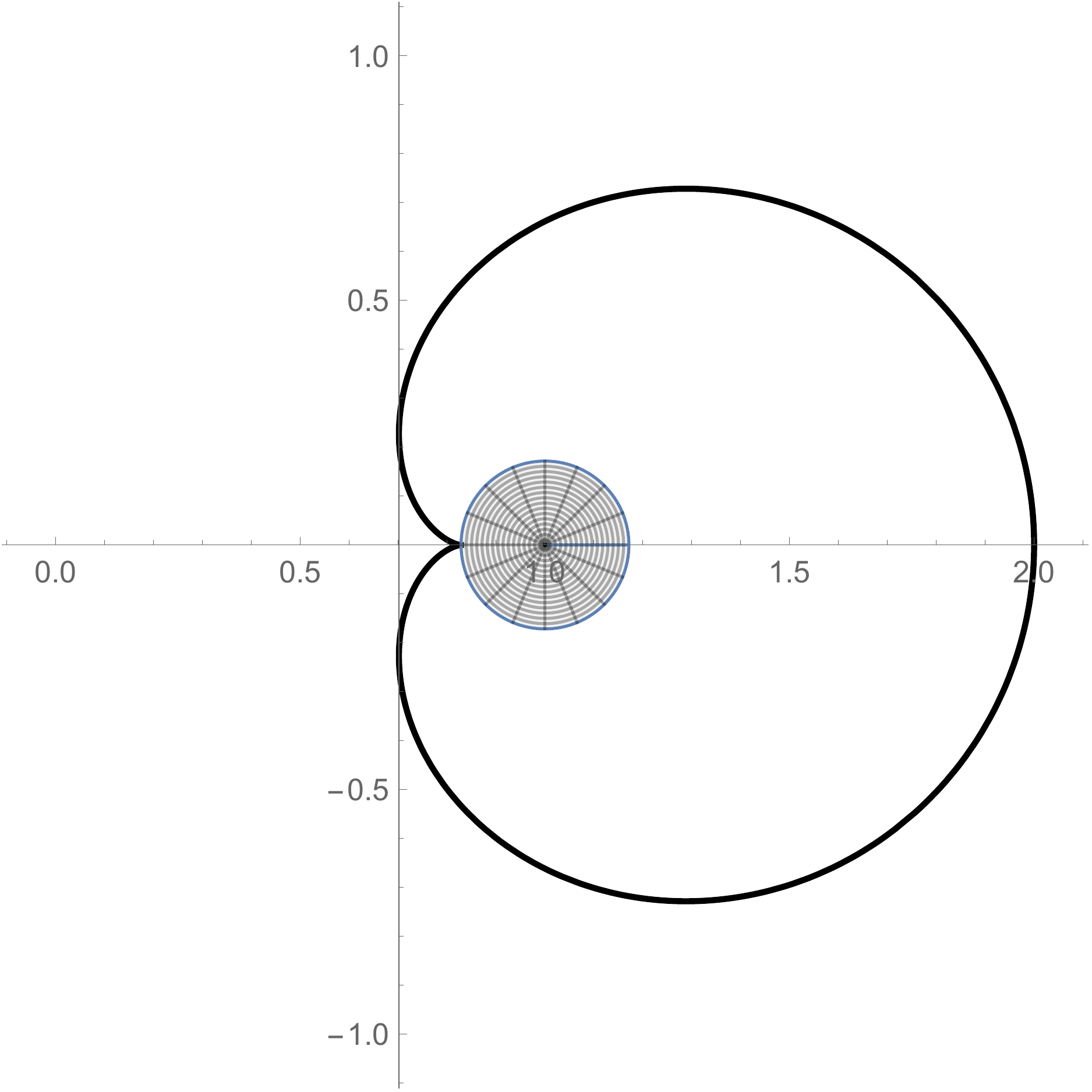}}\hspace{10pt}
		\subfigure[$\mathcal{S}^*_{sin}$]{\includegraphics[width=1.5in]{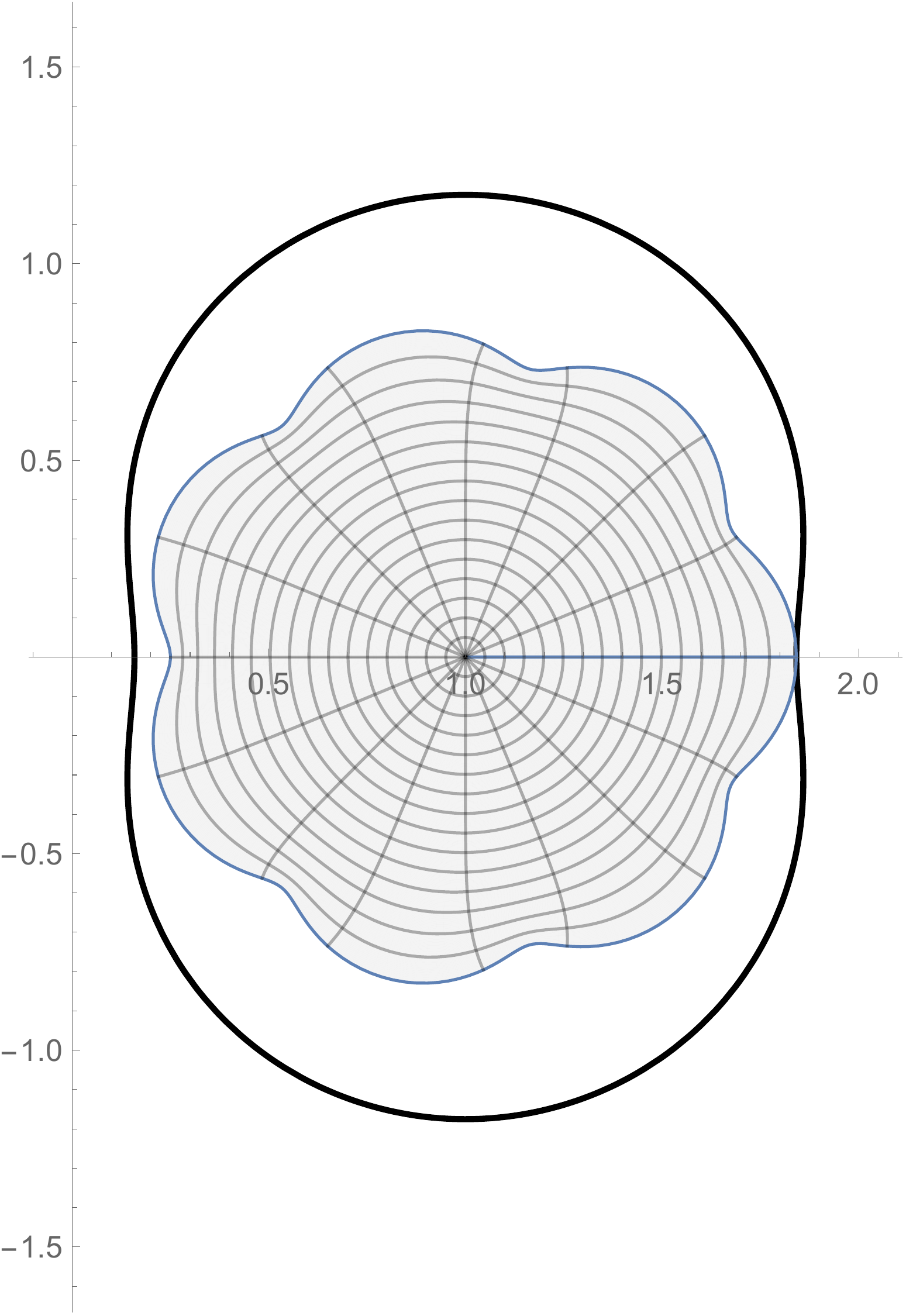}}\hspace{10pt}
		\subfigure[$\mathcal{S}^*_{ne}$]{\includegraphics[width=1.5in]{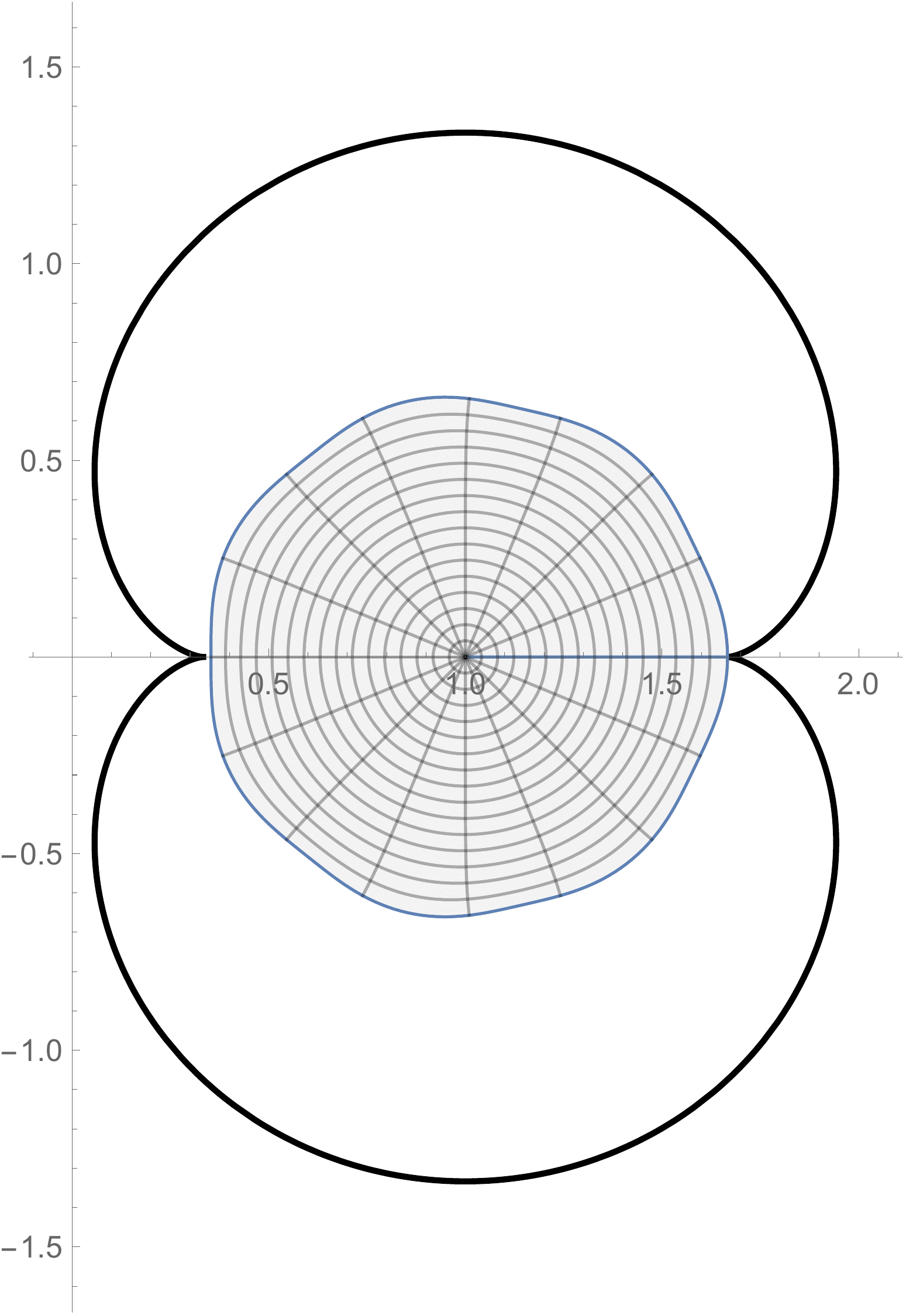}}\hspace{10pt}
	\subfigure[$\mathcal{S}^*(1+\sinh^{-1}(z))$]{\includegraphics[width=1.5in]{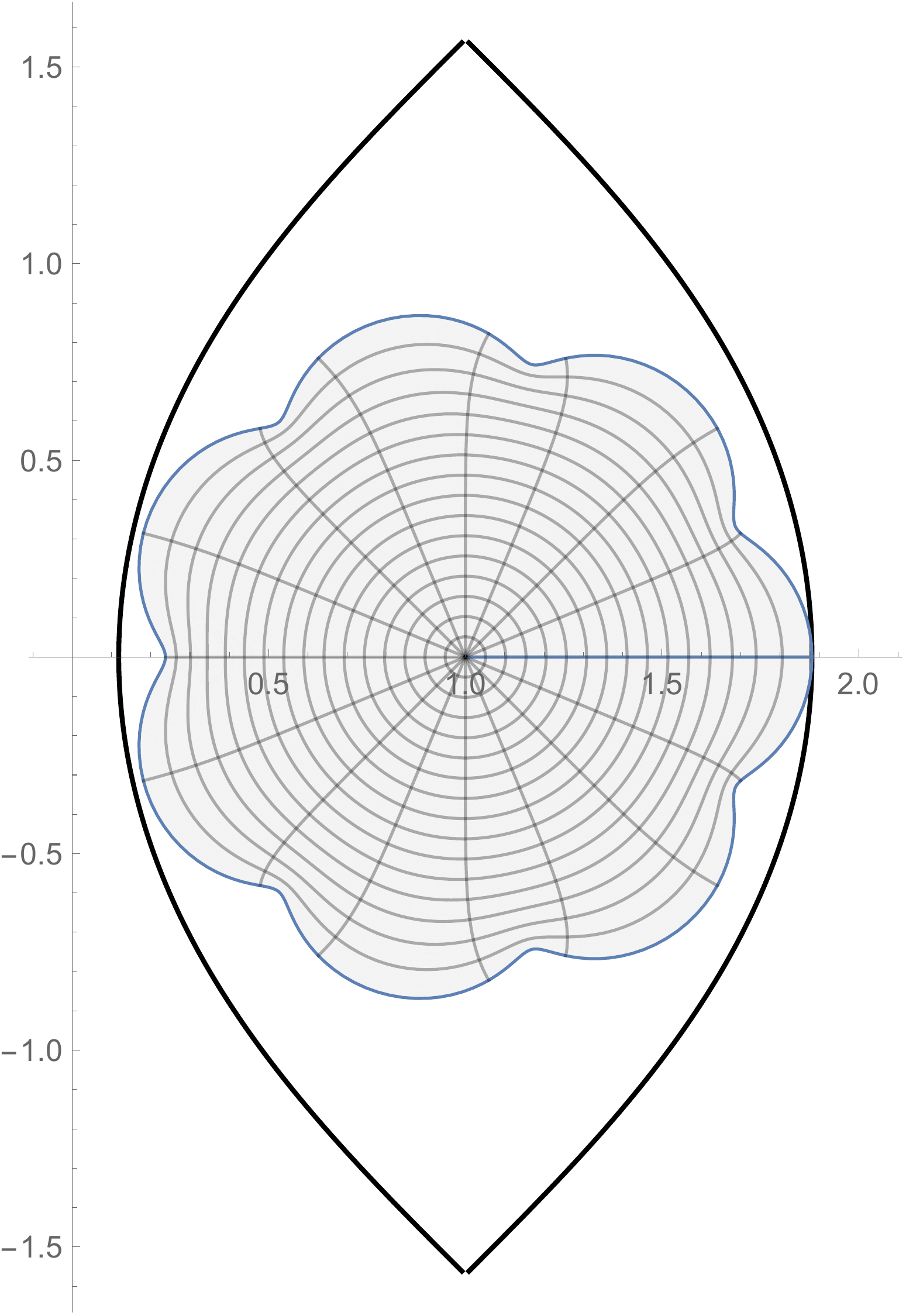}}\hspace{10pt}
		\subfigure[$\mathcal{S}^*(1+ze^z)$]{\includegraphics[width=1.5in]{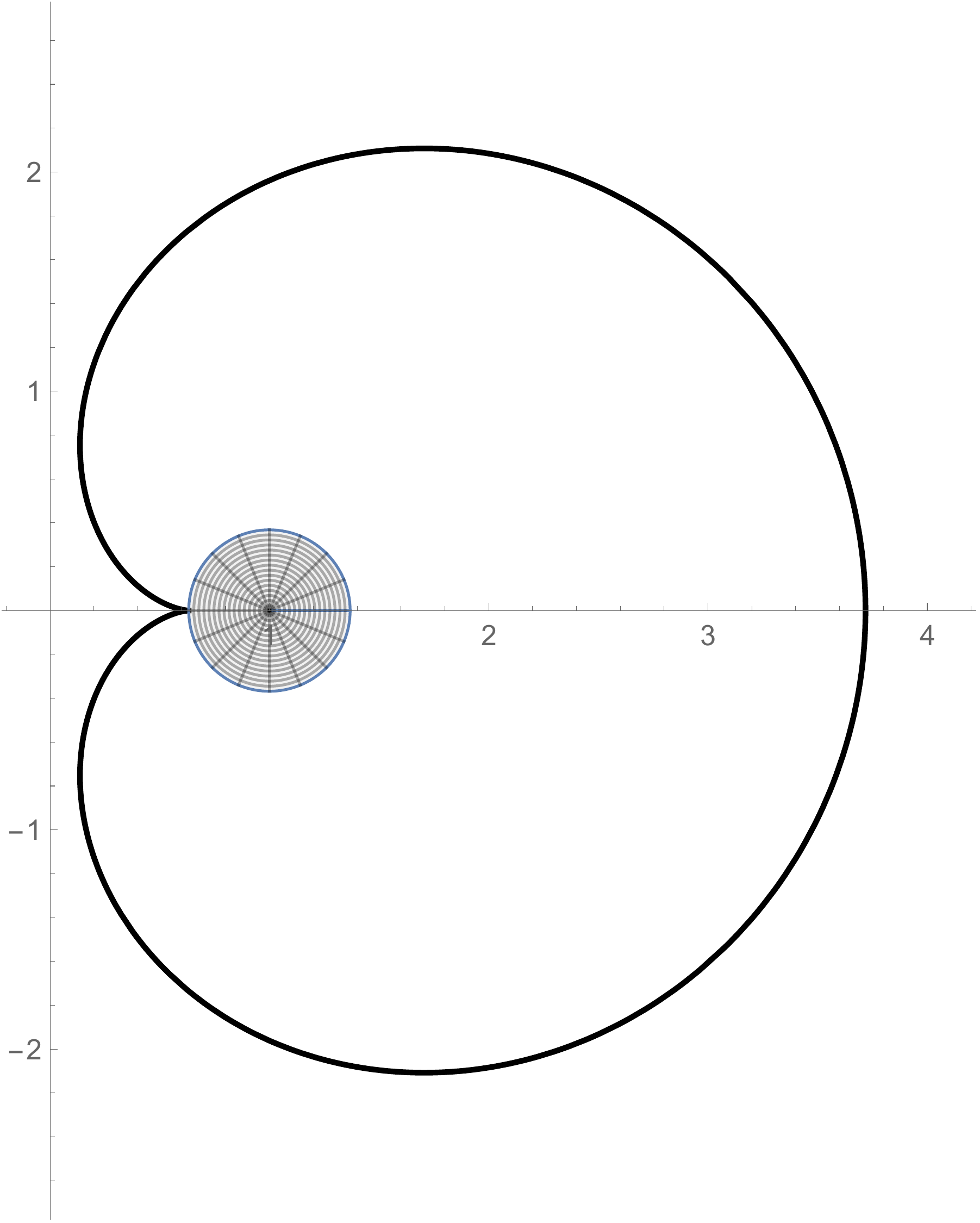}}\hspace{10pt}
		\subfigure[$\mathcal{S}^*_{SG}$]{\includegraphics[width=1.5in]{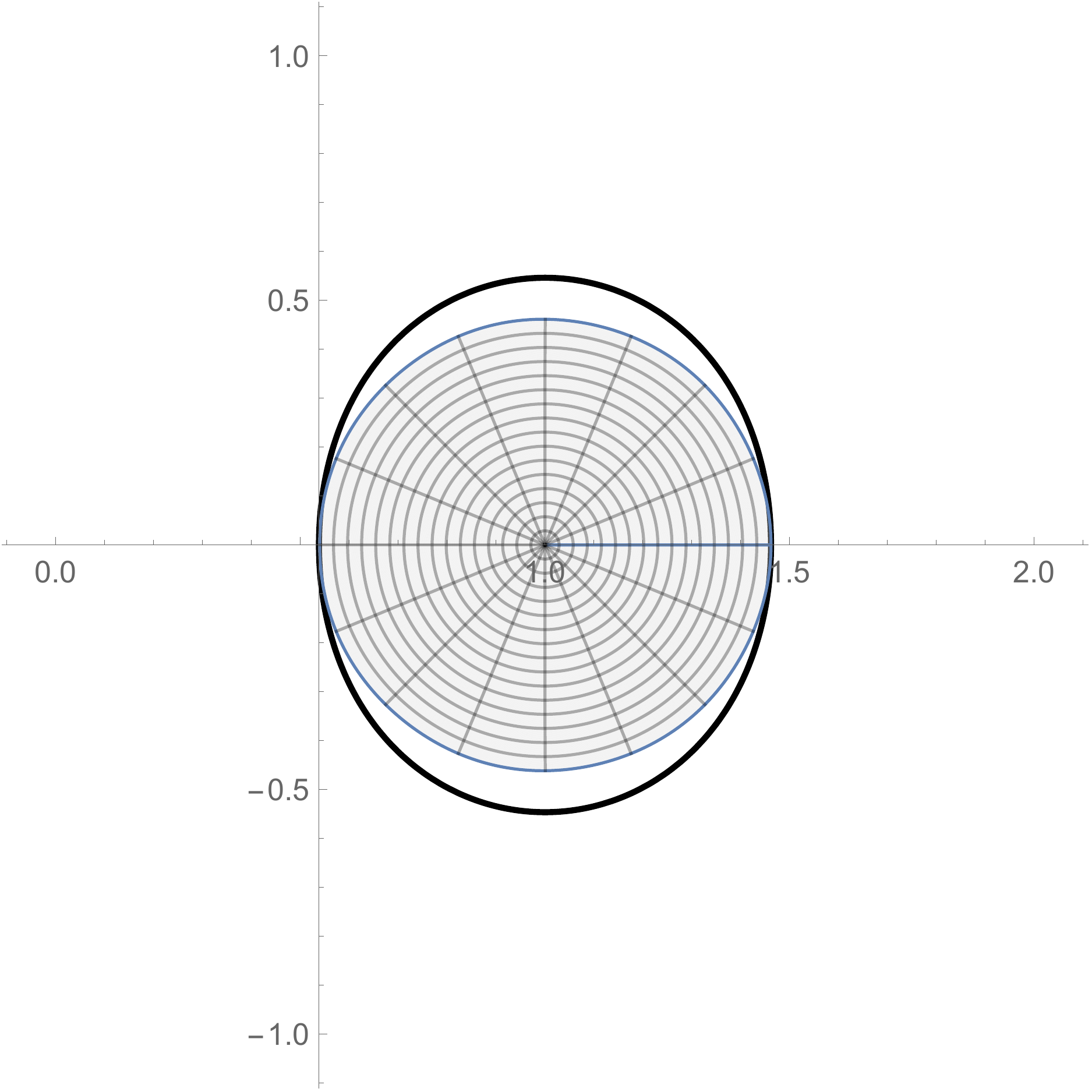}}\hspace{10pt}
			\caption{Sharpness of various radii for class $\Snl$}\label{fig8}
	\end{center}
\end{figure}
\begin{table}
\begin{center}
	\begin{tabular}{QQAAA}
		\toprule
		\mbox{S.No.} & \mbox{Class} & \mbox{n=4} & \mbox{n=6} & \mbox{n=8}\\\midrule
		(a)&\mathcal{S}\mathcal{L}^* & 0.501903 &0.48118 & 0.465714 \\\midrule
		(b)&\mathcal{S}^*_{RL} & 0.353501 & 0.333349 & 0.32165  \\\midrule
		(c)&\mathcal{S}^*_R & 0.213942 & 0.200158 & 0.193019  \\\midrule
		(d)&\mathcal{S}^*_{sin} & 0.892917 & 0.895669 & 0.895131  \\\midrule
		(e)&\mathcal{S}^*_{SG} & 0.554083 & 0.535219 & 0.519222 \\\midrule
		(f)&\mathcal{S}^*_{ne} & 0.752971 & 0.748475 & 0.738894 \\\midrule
		(g)&\mathcal{S}^*(1+ze^z) & 0.472288 & 0.43025 & 0.413972  \\\midrule
		(h)&\mathcal{S}^*(1+\sinh^{-1}(z))& 0.921471 & 0.924325 & 0.924715   \\\bottomrule
	\end{tabular}
\end{center}
\caption{Radii constants for choices of $n$}\label{TAB2}
\end{table}
\section*{Acknowledgements}
The second author is supported by a Junior Research Fellowship from Council of Scientific and Industrial Research (CSIR), New Delhi with File No. 09/045(1727)/2019-EMR-I.

\end{document}